\definecolor{commentgreen}{RGB}{2,112,10}
\definecolor{eminence}{RGB}{108,48,130}
\definecolor{frenchplum}{RGB}{149,20,83}
\definecolor{ffqqqq}{RGB}{215,25,28}
\definecolor{cccccc}{RGB}{171,217,233}
\definecolor{cite}{RGB}{44,123,182}
\definecolor{ref}{RGB}{215,25,28}
\lstdefinelanguage{Sage}
{
keywords={load, matrix, from, import},
emph={},
}
\tikzstyle{dot}=[circle,draw,fill=black,inner sep=0pt, minimum width=4pt]
\tikzstyle{wdot}=[circle,draw,fill=white,inner sep=0pt, minimum width=4pt]
\tikzstyle{fiber}=[rectangle,draw=black,thick,inner sep=0pt, minimum width=6pt, minimum height=6pt,preaction={draw=gray!6, line width=3pt}]
\tikzstyle{half-fiber}=[dashed,rectangle,draw=black,thick,inner sep=0pt, minimum width=6pt, minimum height=6pt]
\tikzstyle{doubleline}=[rectangle,draw=red,thick,inner sep=0pt, minimum width=6pt, minimum height=6pt,preaction={draw=gray!6, line width=3pt}]
\tikzset{double distance = 2pt}
\tikzset{
    partial ellipse/.style args={#1:#2:#3}{
        insert path={+ (#1:#3) arc (#1:#2:#3)}
    }
}
\newtheorem*{rep@theorem}{\rep@title}
\newcommand{\newreptheorem}[2]{
\newenvironment{rep#1}[1]{
 \def\rep@title{#2 \ref{##1}}
 \begin{rep@theorem}}
 {\end{rep@theorem}}}
\theoremstyle{plain}
\newtheorem{theorem}{Theorem}[section]
\newtheorem{corollary}[theorem]{Corollary}
\newtheorem{lemma}[theorem]{Lemma}
\newtheorem{proposition}[theorem]{Proposition}
\newtheorem*{lemma*}{Lemma}
\theoremstyle{definition}
\newtheorem{definition}[theorem]{Definition}
\newtheorem{example}[theorem]{Example}
\newtheorem{remark}[theorem]{Remark}
\newtheorem{question}[theorem]{Question}
\newtheorem*{claim*}{Claim}
\newtheorem*{ack*}{Acknowledgements}
\newcommand{\nd}{\mathrm{nd}}
\newcommand{\Fnd}{\mathrm{Fnd}}
\newcommand{\Mnd}{\mathrm{Mnd}}
\newcommand{\ttbar}{(\tau,\overline{\tau})}
\newcommand{\One}{\sfO_2}
\newcommand{\Oext}{\sfO_1}
\newcommand{\Spec}{\mathrm{Spec}}
\DeclareMathOperator{\Num}{Num}
\DeclareMathOperator{\Pic}{Pic}
\DeclareMathOperator{\Aut}{Aut}
\DeclareMathOperator{\aut}{aut}
\newcommand{\pr}[1]{\mathbb P^{#1}}
\newcommand{\f}{\mathrm{F}}
\newcommand{\hf}{\mathrm{HF}}
\newcommand{\Nef}{\mathrm{Nef}}
\newcommand{\wnod}{W^\mathrm{nod}}
\begin{document}
\title[Enriques surfaces with non-generic non-degeneracy]{
Enriques surfaces with non-generic non-degeneracy 
}

\author[R. Moschetti]{Riccardo Moschetti}
\address{RM: Department of Mathematics F. Casorati, University of Pavia, via Ferrata 5, 27100 Pavia, Italy} 
\email{riccardo.moschetti@unipv.it}

\author[F. Rota]{Franco Rota}
\address{FR: Universit\'e Paris-Saclay, Laboratoire de math\'ematiques d'Orsay, Rue Michel Magat, 91405 Orsay, France} 
\email{franco.rota@universite-paris-saclay.fr}

\author[L. Schaffler]{Luca Schaffler}
\address{LS: Dipartimento di Matematica e Fisica, Universit\`a degli Studi Roma Tre, Largo San Leonardo Murialdo 1, 00146, Roma, Italy}
\email{luca.schaffler@uniroma3.it}

\subjclass[2020]{14J28, 14J50, 14Q10}
\keywords{Enriques surface, elliptic fibration, non-degeneracy invariant, Fano polarization, Mukai polarization}

\begin{abstract}
We study the non-degeneracy invariant $\mathrm{nd}(Y)$ of complex Enriques surfaces in families. Our first main result shows that $\mathrm{nd}(Y)$ cannot increase under specialization. The second main result is the conclusion of the computation of the non-degeneracy invariant for the $155$ families of $(\tau,\overline{\tau})$-generic surfaces introduced by Brandhorst and Shimada. Of the previously known $144$ cases, only $3$ satisfy $\mathrm{nd}(Y)\neq10$, which is the non-degeneracy invariant of a general Enriques surface. The remaining $11$ families studied in this article also have non-generic non-degeneracy. To compute this, we produce upper bounds on $\mathrm{nd}(Y)$ by refining this invariant into two others: the Fano and Mukai non-degeneracy invariants, which are related to two different classes of projective realizations of Enriques surfaces. As a result, we find the first known examples of Enriques surfaces with $\mathrm{nd}(Y)=9$.
\end{abstract}

\maketitle

\section{Introduction}

An Enriques surface is a smooth minimal algebraic surface of Kodaira dimension zero with vanishing irregularity and geometric genus. Enriques surfaces are elliptically fibered, and the geometry and combinatorics of these fibrations encode meaningful geometric information. To capture this, in \cite{CD89} Cossec and Dolgachev introduced the so-called \emph{non-degeneracy invariant}. More precisely, an elliptic pencil on $Y$ has exactly two double fibers, whose underlying reduced curves are called \emph{half-fibers}. The non-degeneracy invariant $\nd(Y)$ is defined to be the maximum number of half-fibers $F_1,\ldots,F_m$ on $Y$ such that $F_i \cdot F_j = 1-\delta_{ij}$ for all $i,j\in\{1,\ldots,m\}$.
The invariant $\nd(Y)$ is closely related to certain, possibly singular, projective realizations of $Y$: its relation with the Fano models of Enriques surfaces is classical, but it will be relevant for this paper to investigate its ties with another class of projective models introduced by Mukai. We review this in Section~\ref{sec:Fano-Mukai-polarizations}, where we also give references.
The importance of this invariant is reflected also in its role in decomposing the derived category of $Y$ \cite{LSZ22}.

The value of $\nd(Y)$ for general Enriques surfaces is known, but the invariant is hard to compute for special ones. For an overview of the known cases, see the Introduction of \cite{MRS24Paper}. For special Enriques surfaces, it is natural to look first for estimates of $\nd(Y)$. To bound $\nd(Y)$ from below it suffices to exhibit appropriate sequences of half-fibers, and this was exploited for instance in \cite{MRS22Paper,MRS24Paper}. It is much more difficult to produce upper bounds. This is achieved, in a sense, in the classification of surfaces of non-degeneracy $3$ of \cite{MMV22_NonDeg3} and in the explicit computation in \cite[Section~5]{MRS24Paper}. The behavior of $\nd(Y)$ in families is also not fully understood yet: under specialization, on the one hand the difference $F_i-F_j$ between two half-fibers may become effective, lowering the non-degeneracy invariant; on the other hand, the nef cone $\Nef(Y)$ may change, giving rise to new elliptic fibrations, potentially increasing this invariant.

\subsection{Main results}
The first main result of this paper controls the non-degeneracy invariant under specialization in a family of Enriques surfaces.

\begin{theorem}[Corollary~\ref{cor:family}]\label{thm:family_intro}
Let $Y\to T$ be a family of Enriques surfaces with $T$ an irreducible and Noetherian scheme, and let $0\in T$ be a closed point. Then, for a very general $t \in T$ we have that
\[
\nd(Y_0)\leq \nd(Y_t).
\]
\end{theorem}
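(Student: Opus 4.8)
The plan is to reduce the statement to a comparison of nef cones, using that the non-degeneracy invariant is determined by lattice data together with the nef cone, and that nef cones can only grow under generization. Since every fiber is an Enriques surface one has $h^{2,0}=0$, so $\Num(Y_t)$ equals the full free quotient of $H^2(Y_t,\bZ)$ and there is no transcendental part; consequently $\cN=R^2\pi_*\bZ/(\mathrm{torsion})$ is a local system of lattices over $T$ with $\cN_t=\Num(Y_t)\cong E_{10}$ and locally constant intersection form. First I would fix, on a simply connected analytic neighbourhood of $0$ (or after a suitable cover), a flat trivialization identifying $\Num(Y_0)$ with $\Num(Y_t)$ as lattices; this is an isometry, so it carries primitive isotropic classes to primitive isotropic classes and preserves all intersection numbers. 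Because the integer $\nd$ is insensitive to monodromy, arguing with such local trivializations is harmless.

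Next I would recall from the classical theory \cite{CD89} that on an Enriques surface the half-fibers are precisely the primitive isotropic nef classes, so that
\[
\nd(Y)=\max\{m : f_1,\dots,f_m\in\Num(Y)\text{ primitive isotropic nef},\ f_i\cdot f_j=1-\delta_{ij}\}.
\]
Thus $\nd(Y)$ depends only on the (constant) lattice $\Num(Y)$ and on the cone $\Nef(Y)$. It therefore suffices to show that, under the flat identification, $\Nef(Y_0)\subseteq\Nef(Y_t)$ for very general $t$: any system $f_1,\dots,f_m$ of half-fibers computing $\nd(Y_0)$ then transports to primitive isotropic classes on $Y_t$ which are again nef, hence half-fibers by \cite{CD89}, and which retain the intersection pattern $f_i\cdot f_j=1-\delta_{ij}$, giving $\nd(Y_t)\ge m=\nd(Y_0)$.

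The inclusion of nef cones I would deduce by duality from an inclusion of pseudoeffective cones. On a smooth projective surface $\Nef$ and $\overline{\mathrm{Eff}}$ are dual with respect to the intersection form, so $\Nef(Y_0)\subseteq\Nef(Y_t)$ follows once $\overline{\mathrm{Eff}}(Y_t)\subseteq\overline{\mathrm{Eff}}(Y_0)$. Here I invoke the semicontinuity of effectivity in proper families: for each integral class $v$, the relative Hilbert scheme of effective divisors of class $v$ is proper over $T$, so the locus $S_v\subseteq T$ on which the flat transport of $v$ is effective is closed, and is therefore either all of $T$ or a proper closed subset. Letting $T^{\circ}$ be the complement of the countable union of those $S_v$ that are proper, every $t\in T^{\circ}$ is very general, and for such $t$ a class is effective on $Y_t$ exactly when it is effective on every fiber, in particular on $Y_0$. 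Hence $\overline{\mathrm{Eff}}(Y_t)\subseteq\overline{\mathrm{Eff}}(Y_0)$, and the theorem follows.

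The main obstacle is the bookkeeping around the local system and the ``very general'' quantifier: one must make precise that $\cN$ is a genuinely constant lattice with flat intersection form (using $h^{2,0}=0$), that ``$v$ effective'' is well defined along flat sections in spite of monodromy, and above all that each $S_v$ is honestly closed — which is where properness of the relative Hilbert (or Picard) scheme and semicontinuity of $h^0$ enter, together with the fact that fixing a relative polarization bounds the Hilbert polynomial in a given class. A secondary point to check is that, after transport, the classes $f_i$ remain primitive and isotropic with $f_i\cdot f_j=1-\delta_{ij}$ — immediate, being lattice-theoretic — and that primitive isotropic nef classes on $Y_t$ are indeed half-fibers, so that the resulting lower bound on $\nd(Y_t)$ is geometrically meaningful.
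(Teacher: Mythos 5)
Your overall strategy is genuinely different from the paper's. You identify $\Num(Y_0)$ with $\Num(Y_t)$ by flat transport in the local system $R^2\pi_*\bZ/(\mathrm{torsion})$, prove $\overline{\mathrm{Eff}}(Y_t)\subseteq\overline{\mathrm{Eff}}(Y_0)$ for very general $t$ by a countability-plus-semicontinuity argument, and dualize to obtain $\Nef(Y_0)\subseteq\Nef(Y_t)$. The paper instead reduces to a family over a discrete valuation ring through $0$ and the generic point (using \cite[Lemma~2.1]{Vial13} to replace the very general fiber by the geometric generic fiber, and \cite[Tag 0CM2]{stacks-project} for the DVR), henselianizes, and invokes the argument of \cite[Theorem~4.1]{TotaroNef} (this is where $H^1(\sO)=H^2(\sO)=0$ enters) to conclude that $\Pic(Y)\to\Pic(Y_0)$ and $\Pic(Y_K)\to\Pic(Y_{\overline K})$ are isomorphisms; nefness is then checked directly by descending an effective curve from $Y_{\overline K}$ to $Y_K$ (\cite[Theorem~14.85]{GW10}), extending it over the base, and specializing to $Y_0$ by semicontinuity. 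Your final step — transporting a non-degenerate isotropic sequence from $Y_0$ to $Y_t$ — matches the paper's deduction of the Corollary exactly, and your aside about half-fibers is unnecessary: the paper's Definition~\ref{def:nd-invariant} phrases $\nd$ directly in terms of nef isotropic sequences, so lattice data plus the nef cone is all you need.

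There are, however, two genuine gaps. First, the pivot of your argument — ``for each integral class $v$ the locus $S_v\subseteq T$ where the flat transport of $v$ is effective is closed'' — does not typecheck as stated: the flat transport of $v$ is defined only over a simply connected neighbourhood (or the universal cover), and the monodromy of $R^2\pi_*\bZ$ in an Enriques family is typically infinite, so $S_v$ is not a well-defined subset of $T$; moreover the naive globalization (taking images in $T$ of the closed effectivity loci inside non-finite components of $\Pic_{Y/T}$, which is étale but not proper over $T$) yields images of closed sets under non-proper maps, which need not be closed. You name this as ``bookkeeping'', but it is the actual crux: a repair requires, for instance, working with the countably many irreducible components $I_k$ of the image of the class map $\mathrm{Div}_{Y/T}\to\Pic_{Y/T}$ — each proper, hence finite, over $T$ with Zariski-closed image — declaring $t$ very general when it avoids all $I_k$ with $q(I_k)\subsetneq T$, and then arguing (via pure-dimensionality of the irreducible $I_k$, so that its analytic sheets over a simply connected tube around a path from $t$ to $0$ are full) that the transported class of any effective divisor on $Y_t$ specializes to an effective class on $Y_0$. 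This monodromy problem is precisely what the paper's henselianization is designed to eliminate: over a henselian DVR, $\Pic(Y)\cong\Pic(Y_0)$ on the nose and there is nothing to transport. Second, your analytic setup silently assumes $T$ is of finite type over $\bC$ (you need an analytification and second countability for the countable-union argument), whereas the statement allows an arbitrary irreducible Noetherian $T$; the paper's reduction to $\Spec(R)$ handles that generality uniformly. With these repairs your route does give a correct proof of the key inclusion $\Nef(Y_0)\subseteq\Nef(Y_t)$, but as written the closedness of $S_v$ — the step carrying all the weight — is unproved.
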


We apply Theorem~\ref{thm:family_intro} to the class of $\ttbar$-generic Enriques surfaces (Definition~\ref{def:tautaubar-generic-Enriques-surfaces}) of Brandhorst and Shimada \cite{BS22}. These are, roughly speaking, Enriques surfaces $Y$ containing a configuration of smooth rational curves which span a lattice of ADE-type $\tau$ with primitive closure of ADE-type $\overline{\tau}$ in $\Num(Y)$, the group of divisors modulo numerical equivalence. The results in \cite{BS22} show that, for fixed $\tau$ and $\overline{\tau}$, all $\ttbar$-generic Enriques surfaces have the same elliptic fibrations and hence the same non-degeneracy invariant.
Building upon this theory, in Definition~\ref{def:ttbar_surface} we introduce \emph{$(\tau,\overline{\tau})$ Enriques surfaces}, which can be thought of as limits of $(\tau,\overline{\tau})$-generic Enriques surfaces, and obtain the following upper bounds for their non-degeneracy invariant.

\begin{theorem}[Theorem~\ref{thm:boundLatticePol}]\label{thm:LatticePol_Intro}
    Let $Y$ be a $\ttbar$ Enriques surface. 
Then, $\nd(Y)\leq \nd(\overline{Y})$, where $\overline{Y}$ is any $\ttbar$-generic Enriques surface.
\end{theorem}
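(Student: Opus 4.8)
The plan is to realize $Y$ as a special member of a family of Enriques surfaces whose very general member is $\ttbar$-generic, and then to feed that family into Corollary~\ref{cor:family}. By Definition~\ref{def:ttbar_surface}, a $\ttbar$ Enriques surface comes equipped with a marking of $\Num(Y)$ identifying a configuration of smooth rational curves that span a sublattice of ADE-type $\tau$ whose primitive closure has type $\overline{\tau}$; this is exactly the lattice datum that cuts out the Brandhorst--Shimada stratum whose generic point is $\ttbar$-generic (Definition~\ref{def:tautaubar-generic-Enriques-surfaces}). The content of Definition~\ref{def:ttbar_surface} is precisely that the period point of $Y$ lies in the closure of the $\ttbar$-generic locus, so that a degeneration of a $\ttbar$-generic surface to $Y$ exists.

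First I would construct the family. Using the period theory for Enriques surfaces---the Torelli theorem together with surjectivity of the period map---the marked datum $\tau\subseteq\overline{\tau}\subseteq\Num(Y)$ determines a period subdomain $\mathcal{D}_{\ttbar}$, and the $\ttbar$-generic surfaces correspond to the points of $\mathcal{D}_{\ttbar}$ lying off the countably many $(-2)$-hyperplanes that are not already forced by $\overline{\tau}$. The period point of $Y$ lies in $\mathcal{D}_{\ttbar}$, possibly on some of these extra hyperplanes. I would then choose an irreducible Noetherian base $T$ with a closed point $0$ and a family $Y\to T$ with $Y_0\cong Y$ whose period map sends $T$ into $\mathcal{D}_{\ttbar}$, with the generic point of $T$ mapping off all the extra $(-2)$-hyperplanes; surjectivity of the period map guarantees that the corresponding family of Enriques surfaces exists. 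Since being $\ttbar$-generic is the complement of countably many proper closed conditions, the very general fiber $Y_t$ is then $\ttbar$-generic.

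Finally I would invoke Corollary~\ref{cor:family}, which gives $\nd(Y)=\nd(Y_0)\leq\nd(Y_t)$ for very general $t\in T$. By \cite{BS22} all $\ttbar$-generic Enriques surfaces share the same elliptic fibrations, hence the same non-degeneracy invariant, so $\nd(Y_t)=\nd(\overline{Y})$ for any $\ttbar$-generic $\overline{Y}$; combining the two gives $\nd(Y)\leq\nd(\overline{Y})$.

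The main obstacle is the construction of the family, that is, certifying that $Y$ genuinely degenerates \emph{from} a $\ttbar$-generic surface inside an irreducible base. This requires that the relevant Brandhorst--Shimada stratum be irreducible with $\ttbar$-generic generic member and that the period point of $Y$ lie in its closure---exactly the information packaged into Definition~\ref{def:ttbar_surface}. Once this is in place, the remainder is a direct application of Corollary~\ref{cor:family} together with the constancy of $\nd$ on the $\ttbar$-generic locus.
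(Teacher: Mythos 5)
Your overall strategy coincides with the paper's: realize $Y$ as the special fiber of a family whose very general fiber is $\ttbar$-generic, apply Corollary~\ref{cor:family}, and conclude with the constancy of $\nd$ on the $\ttbar$-generic locus. The genuine gap is in the step you yourself flag as the main obstacle: the construction of the family. Surjectivity of the period map is a pointwise statement---it produces, for each period point, \emph{some} surface with that period---and does not by itself yield a smooth projective morphism over an irreducible Noetherian \emph{scheme} $T$ with special fiber $Y$ and the period map of the family landing in a prescribed subdomain; but Corollary~\ref{cor:family} requires exactly such an algebraic family. The paper fills this hole on the K3 side: Definition~\ref{def:ttbar_surface} says precisely that the covering K3 of $Y$ is a smooth $(M_R,h)$-polarized K3 surface, so one can invoke the moduli stack $\sF_{M_R}$ of \cite{AE25} with its universal family, pass to an \'etale atlas, choose an irreducible component $A'$ through the point parametrizing the cover $X$ (which dominates a component of $\sF_{M_R}$), and then---after a further \'etale base change---glue the fiberwise Enriques involutions into a global involution (Proposition~\ref{prop:InvolutionInFamily}) to obtain the family $\sY\to A'$ of Enriques surfaces with $\sY_a\cong Y$. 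This descent of the involution in families is itself a nontrivial step that your sketch bypasses by positing a family of Enriques surfaces directly; without it, or some equivalent algebraization argument, the proof is incomplete.

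Two smaller inaccuracies are worth noting. First, Definition~\ref{def:ttbar_surface} does \emph{not} equip $Y$ with a configuration of smooth rational curves spanning a lattice of type $\tau$: it is a purely lattice-theoretic condition (an isometric embedding $M_R\hookrightarrow S_X$ compatible with the pullback from $S_Y(2)$), and the relevant $(-2)$-classes need not be effective on a $\ttbar$ surface. Second, $\ttbar$-genericity of the very general fiber is not only avoidance of extra $(-2)$-hyperplanes (which forces $\Pic(X)\cong M_R$, i.e.\ condition~(2) of Definition~\ref{def:tautaubar-generic-Enriques-surfaces}); condition~(1), that the Hodge isometries of $T_X$ reduce to $\pm\mathrm{id}_{T_X}$, must also be checked, which the paper does in Lemma~\ref{lem:polK3}~(2) via the rank bound on $T_X$ and \cite{Shi24}. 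Relatedly, you do not actually need irreducibility of the whole Brandhorst--Shimada stratum: the paper's Definition~\ref{def:very-general-polarized-K3} takes the geometric generic fiber over \emph{any} irreducible component, and Lemma~\ref{lem:polK3}~(2) applies to each, so choosing a component through the point parametrizing $X$ suffices.
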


This result allows to compute the non-degeneracy invariant of every Enriques surface that contains a configuration of smooth rational curves with dual graph as in \eqref{fig:configurations-smooth-rational-curves-BP-MLP1}~(a) or (b). See Remark~\ref{rmk:description} for other characterizations of these surfaces.

\begin{equation}
\noindent
\minipage{0.5\textwidth}
\centering
\begin{tikzpicture}[scale=0.5]

\node (a) at (180:4)[]{$(a)$};

\node (F1) at (-135:2) [dot, label=left:{}]{};
\node (F2) at (-90:2) [dot, label=below:{}]{};
\node (F3) at (-45:2) [dot, label=right:{}]{};
\node (F4) at (0:2) [dot, label=right:{}]{};
\node (F5) at (45:2) [dot, label=right:{}]{};
\node (F6) at (90:2) [dot, label=above:{}]{};
\node (F7) at (135:2) [dot, label=left:{}]{};
\node (F8) at (180:2) [dot, label=left:{}]{};
\node (L1) at (180:1) [dot, label=above:{}]{};
\node (N) at (0:1) [dot, label=above:{}]{};

\draw[] (F2)--(F3)--(F4)--(F5)--(F6)--(F7)--(F8) (F4)--(N);
\draw[]  (F8)--(L1) (F2)--(F1)--(F8);

\end{tikzpicture}
\endminipage
\minipage{0.40\textwidth}
\centering
\begin{tikzpicture}[scale=0.5]

\node (b) at (180:4)[]{$(b)$};

\node (1) at (90:2) [dot]{};
\node (2) at (45:2) [dot,label=right:{}]{};
\node (3) at (45:1) [dot,label=below:{}]{};
\node (4) at (0:2) [dot,label=right:{}]{};
\node (5) at (-45:2) [dot,label=right:{}]{};
\node (6) at (-45:1) [dot]{};
\node (7) at (-90:2) [dot,label=below:{}]{};
\node (8) at (-135:2) [dot,label=left:{}]{};
\node (9) at (-135:1) [dot]{};
\node (10) at (180:2) [dot,label=left:{}]{};
\node (11) at (135:2) [dot,label=left:{}]{};
\node (12) at (135:1) [dot]{};

\draw[] (2)--(4)--(3) (4)--(5)--(7)--(8)--(10)--(11);
\draw[]  (3)--(1)--(2) (4)--(6)--(7)--(9)--(10)--(12)--(1)--(11);

\end{tikzpicture}
\endminipage
\label{fig:configurations-smooth-rational-curves-BP-MLP1}
\end{equation}

\begin{theorem}[Corollary \ref{cor:ndBP} and \ref{cor:ndmlp1}]
Let $Y$ be an Enriques surface containing a configuration of smooth rational curves with dual graph (a) (resp., (b)) as in \eqref{fig:configurations-smooth-rational-curves-BP-MLP1}. Then, $\nd(Y)=4$ (resp., $\nd(Y)=8$).
\end{theorem}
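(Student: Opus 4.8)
The plan is to squeeze $\nd(Y)$ between matching upper and lower bounds and conclude by equality.

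For the \emph{upper bound} I would first read off from the dual graph in \eqref{fig:configurations-smooth-rational-curves-BP-MLP1}~(a) (resp.\ (b)) the ADE-type lattice $\tau$ spanned by the classes of the displayed $(-2)$-curves, and then compute its primitive closure $\overline{\tau}$ inside the Enriques lattice $\Num(Y)$. Appealing to the theory of primitive embeddings, one checks that this pair $\ttbar$ does not depend on the particular surface, so that every Enriques surface carrying the configuration is a $\ttbar$ Enriques surface in the sense of Definition~\ref{def:ttbar_surface}, with $\ttbar$ equal to one of the Brandhorst--Shimada pairs. Theorem~\ref{thm:LatticePol_Intro} then gives $\nd(Y) \leq \nd(\overline{Y})$ for the associated $\ttbar$-generic surface $\overline{Y}$, and it remains to quote the value $\nd(\overline{Y}) = 4$ (resp.\ $8$) recorded in the computation of the $155$ generic families.

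For the \emph{lower bound} I would exhibit an explicit non-degenerate sequence of half-fibers internal to the configuration. Inside the dual graph one locates extended Dynkin sub-diagrams whose associated weighted divisors $F_1,\dots,F_4$ (resp.\ $F_1,\dots,F_8$) are half-fibers of elliptic pencils on $Y$, and then verifies the defining relations $F_i\cdot F_j = 1-\delta_{ij}$ directly from the incidence pattern of the graph. Since each $F_i$ is a fixed effective combination of curves that are present on every surface containing the configuration, the sequence persists on all such $Y$, giving $\nd(Y)\geq 4$ (resp.\ $8$) unconditionally; combining the two bounds yields the claimed equality.

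The main obstacle is the upper bound, and within it the passage to $\overline{\tau}$: one must pin down the primitive closure of $\tau$ and confirm that it is forced for every surface containing the configuration --- so that no special member slips into a different family with a larger invariant --- and then correctly match $\ttbar$ to the tabulated family realizing $\nd(\overline{Y})=4$ (resp.\ $8$), an input that ultimately rests on the Brandhorst--Shimada classification of the elliptic fibrations of the generic surface. The lower bound, by contrast, is a finite and transparent check: once the candidate half-fibers have been written down, verifying that each weighted sub-diagram is a genuine half-fiber and that the pairwise intersections equal $1$ is routine arithmetic in $\Num(Y)$.
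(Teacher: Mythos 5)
Your outline matches the paper's proof strategy: both establish the upper bound by showing that any surface carrying the configuration is a $\ttbar$ Enriques surface in the sense of Definition~\ref{def:ttbar_surface} and then applying Theorem~\ref{thm:boundLatticePol} together with the known value for the $\ttbar$-generic member ($\nd=4$ for $(E_8,E_8)$ from \cite{MRS24Paper}, $\nd=8$ for $(D_8,E_8)$ from Theorem~\ref{thm:ndTauBarTau}), and both obtain the lower bound from a non-degenerate isotropic sequence supported on curves present on every such surface (the paper simply cites \cite[Remark~4.3]{MMV22_NonDeg3} and \cite[Proposition~6.2]{MRS22Paper} rather than re-deriving it).

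However, your treatment of the key step glosses over what the paper actually has to prove, in a way that would leave a gap if written out as stated. First, being a $\ttbar$ Enriques surface is \emph{not} a condition on $\Num(Y)$ alone: Definition~\ref{def:ttbar_surface} requires an isometric embedding $M_R \hookrightarrow S_X$ of the lattice $M_R \subseteq L_{10}(2)\oplus R(2)$ into the Picard lattice of the covering K3, compatible with the pullback of $L_{10}(2)\cong S_Y(2)$. ``Appealing to the theory of primitive embeddings'' inside $\Num(Y)$ does not produce this; the paper constructs the embedding explicitly, using that each smooth rational curve $w_i$ splits in the \'etale double cover as $\pi^*w_i = w_i^+ + w_i^-$ with signs arranged so that $w_1^+,\ldots,w_8^+$ again form a root basis, and then sending $(v,0)\mapsto \pi^*v$ and $(w_i,\pm w_i)/2 \mapsto w_i^\pm$. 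Second, the ADE lattice $\tau$ is \emph{not} spanned by all the displayed curves: the full graphs in \eqref{fig:configurations-smooth-rational-curves-BP-MLP1} have $10$ resp.\ $12$ vertices and span lattices containing isotropic classes, so one must select the $E_8$ (resp.\ $D_8$) sub-configuration of $8$ curves (the filled vertices of Figures~\ref{fig:BarthPetersNum} and \ref{fig:dual-graph-MLP1}). Third, in case (b) the primitive closure is a genuine computation rather than a formality: $\overline{D_8}=E_8$ is an index-$2$ overlattice, verified in Lemma~\ref{lem:primitive-closure-D8-lattice} by exhibiting the glue vector $\frac{1}{2}(e_1+e_4+e_6+e_8)\in S_Y$ and analyzing the discriminant group (in case (a) it is immediate since $E_8$ is unimodular). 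These are fillable steps, and you correctly identified the primitive closure as the crux, but as written your argument stops one level too high --- at $\Num(Y)$ instead of at the covering K3, where the definition lives.
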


Our second main result, independent of Theorem \ref{thm:family_intro} but also a key input for Theorem \ref{thm:LatticePol_Intro}, is the completion of the computation, started in \cite{MRS24Paper}, of the non-degeneracy invariant for the 155 families of $\ttbar$-generic Enriques surfaces.  For completeness, we state here the value of the non-degeneracy invariant for all the $\ttbar$-generic Enriques surfaces. Except for $i = 145,172,184$, all the cases in \eqref{eq:14cases_Intro} are new and substantially harder than many of the previously known cases, since they require the computation of an upper bound for their non-degeneracy invariant. Earlier partial results are recalled in Section~\ref{sec:mainttb}.

\begin{theorem}[Theorem~\ref{thm:ndTauBarTau}]
\label{thm:TauBarTauFinal}
Let $Y_i$ denote the $i$-th $\ttbar$-generic Enriques surface as listed in \cite[Table~1]{BS22}. Then $\nd(Y_i)=10$, except for the following cases:
\begin{equation}\label{eq:14cases_Intro}
\begin{tabular}{ccccc}
$\nd(Y_{84})=9$, & $\nd(Y_{85})=7$, & $\nd(Y_{121})=9$, & $\nd(Y_{122})=7$, & $\nd(Y_{123})=7$, \\
$\nd(Y_{143})=8$, & $\nd(Y_{144})=8$, & $\nd(Y_{145})=4$, & $\nd(Y_{158})=9$, & $\nd(Y_{159})=7$, \\
$\nd(Y_{171})=8$, & $\nd(Y_{172})=4$, & $\nd(Y_{176})=7$, & $\nd(Y_{184})=7$. & 
\end{tabular}
\end{equation}
\end{theorem}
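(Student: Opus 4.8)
The plan is to prove Theorem~\ref{thm:TauBarTauFinal} by combining the two-sided estimates that the paper has been building toward: lower bounds coming from explicit configurations of half-fibers, and upper bounds coming from the refinement of $\nd$ into the Fano and Mukai non-degeneracy invariants $\Fnd$ and $\Mnd$. Concretely, for each of the $155$ families indexed by $i$ one knows, by the work of Brandhorst and Shimada \cite{BS22}, the full lattice $\Num(Y_i)$ together with the complete list of smooth rational curves and elliptic fibrations; this is exactly the rigid combinatorial data that makes the invariant computable. The strategy is therefore: \textbf{(1)} confirm $\nd(Y_i)=10$ for all indices outside the list \eqref{eq:14cases_Intro}, and \textbf{(2)} pin down the exact value in each of the $14$ exceptional cases by squeezing the lower and upper bounds together.

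For the generic value $\nd(Y_i)=10$, I would recall that $10$ is the non-degeneracy invariant of a general Enriques surface and is an absolute upper bound, so it suffices to produce, in each such family, an explicit sequence of half-fibers $F_1,\dots,F_{10}$ with $F_i\cdot F_j=1-\delta_{ij}$. Because the $\ttbar$-generic surfaces all share the same elliptic fibrations, this is a finite lattice-theoretic search inside $\Num(Y_i)$: one looks for ten isotropic classes, each the half-fiber of one of the known pencils, meeting pairwise in $1$. This step is routine given the Brandhorst--Shimada tables and is where most of the $155$ cases are dispatched by computer; the earlier partial results recalled in Section~\ref{sec:mainttb} already cover $144$ of them, so the genuinely new content is concentrated in the $11$ remaining families.

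The heart of the theorem is the $14$ exceptional cases, where the value drops below $10$. For the lower bound I would again exhibit an explicit mutually-intersecting sequence of half-fibers realizing the claimed value (e.g.\ a sequence of length $9$ for $Y_{84}$, length $4$ for $Y_{145}$, and so on); this is the ``easy direction'' and proceeds by the same search as above, now terminating at a shorter length. The difficult direction is the matching upper bound, and this is where the Fano and Mukai non-degeneracy invariants enter: since $\nd(Y)\le \min(\Fnd(Y),\Mnd(Y))$ by their construction in Section~\ref{sec:Fano-Mukai-polarizations}, I would compute these two refined invariants from the lattice data and show that at least one of them already forces the bound. For the three previously known cases $i=145,172,184$ the bound is inherited from \cite{MRS24Paper,MMV22_NonDeg3}, but for the eleven new cases one must carry out the refined computation directly in $\Num(Y_i)$, translating the geometric constraint that no longer sequence of half-fibers can exist into an exclusion statement about isotropic sequences in the lattice.

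\textbf{Main obstacle.} The crux is the upper bound in the eleven new exceptional families: a priori one must rule out \emph{every} candidate sequence of half-fibers of length exceeding the claimed value, and unlike the lower bound this is not witnessed by a single configuration. The whole point of introducing $\Fnd$ and $\Mnd$ is to make this tractable, by replacing the search over arbitrary half-fiber sequences with a bounded computation attached to the Fano and Mukai polarizations; the delicate part will be verifying that these refined invariants are genuinely sharp here (i.e.\ that $\min(\Fnd(Y_i),\Mnd(Y_i))$ coincides with the exhibited lower bound) rather than merely giving a non-tight estimate, and handling the lattice-saturation subtleties encoded in the primitive closure $\overline{\tau}$ that distinguish these surfaces from the generic ones.
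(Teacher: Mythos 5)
There is a genuine gap, and it sits at the very center of your plan: the inequality $\nd(Y)\le \min(\Fnd(Y),\Mnd(Y))$ that you invoke for the upper bounds is false, and in fact points in the opposite direction from what the paper proves. Since every canonical isotropic sequence of length $10$ (resp.\ in $\One$) of non-degeneracy $c$ contains a non-degenerate subsequence of length $c$, one gets $\nd(Y)\geq \Fnd(Y)$ and $\nd(Y)\geq \Mnd(Y)$ (Lemma~\ref{lem_NdGeqMax}); the paper then upgrades this to the equality $\nd(Y)=\max\{\Fnd(Y),\Mnd(Y)\}$ (Corollary~\ref{cor:max}), using the extension results of Lemma~\ref{lem_maximal sequences}. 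A concrete counterexample to your inequality is any general Enriques surface, or any $Y_i$ outside \eqref{eq:14cases_Intro}: there $\Fnd(Y_i)=10$ and $\Mnd(Y_i)=9$, so $\min=9$ while $\nd(Y_i)=10$. The practical consequence is that your step ``show that at least one of them already forces the bound'' does not suffice: to prove $\nd(Y_i)\le N_i$ in the eleven new exceptional families you must bound \emph{both} $\Fnd(Y_i)$ and $\Mnd(Y_i)$ above by $N_i$ separately, which is exactly what Propositions~\ref{prop_Fnd} and \ref{prop_Mnd} do via Theorem~\ref{cor_ndUpperBound}. Bounding only one of the two leaves the other free to push $\nd$ higher.

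Two further points your sketch leaves unaddressed, both of which carry real content in the paper's argument. First, the surfaces in question (apart from $i=172,184$) have infinite automorphism group, so the sets $\sfF_{Y}$ and $\sfM_{Y}$ of Fano and Mukai polarizations are infinite; the computation only becomes finite after passing to representatives modulo $\aut(Y)$, and the paper achieves this by matching the chamber $D_0$ inside a fundamental domain with the nef cone of a Kond\=o surface with finite automorphism group (Lemma~\ref{lem_FKondoF0}), where $\sfF_Z$ and $\sfM_Z$ can be enumerated exhaustively. Your phrase ``a bounded computation attached to the polarizations'' presupposes this reduction without supplying it. Second, the Fano and Mukai cases are not symmetric: for a Mukai polarization $v$ with sequence $(g_1,\ldots,g_9)$, a contracted $(-2)$-curve lying in $H=\langle g_1,\ldots,g_9\rangle^\perp$ does \emph{not} lower the non-degeneracy (Lemma~\ref{lem_UniqueCanonicalMukai}~(2) and Lemma~\ref{lem_uniqueOrt}), so one must find contracted curves \emph{not} in $H$; this is precisely the delicate secondary search in the proof of Proposition~\ref{prop_Mnd} for the cases $N_i=8$, and it is invisible in your outline. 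Your lower-bound strategy (explicit sequences of half-fibers) and your division of labor between the $144$ known and $11$ new cases do match the paper, but the upper-bound mechanism as you state it would fail.
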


We remark that the $(\tau,\overline{\tau})$-generic Enriques surfaces $Y_{84},Y_{121},Y_{158}$ are the first known examples of Enriques surfaces with non-degeneracy invariant $9$. We are not aware of any example of Enriques surfaces with non-degeneracy invariant $5$ or $6$. Next, we discuss some of the technical ideas behind the proofs of these results.

\subsection{The non-degeneracy invariant in families}

Theorem \ref{thm:family_intro} is based on the following observation of independent interest.

\begin{theorem}[Theorem \ref{thm:family}]
    Let $Y\to T$ be a smooth projective morphism of schemes over $\bC$ of relative dimension $2$. Suppose that $T$ is irreducible and Noetherian and that for a closed point $0\in T(\bC)$, the fiber $Y_0$ satisfies $H^1(Y_0, \sO_{Y_0}) = H^2(Y_0, \sO_{Y_0}) = 0$. Then, for very general $t\in T(\bC)$ there is an identification $\Pic(Y_0)\cong \Pic(Y_t)$ which preserves intersections and induces an inclusion $\Nef(Y_0) \subseteq \Nef(Y_t)$.
\end{theorem}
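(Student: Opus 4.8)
The plan is to show that the hypothesis $h^1=h^2=0$ forces the relative Picard scheme to be \emph{\'etale} near $0$, to read off from this an isometric identification $\Pic(Y_0)\cong\Pic(Y_t)$ for very general $t$, and then to obtain the nef inclusion by duality on surfaces, once it is known that effective classes on $Y_t$ specialize to effective classes on $Y_0$.

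First I would shrink the base. By upper semicontinuity of cohomology the locus $U=\{t\in T:h^1(Y_t,\sO_{Y_t})=h^2(Y_t,\sO_{Y_t})=0\}$ is open; it contains $0$, hence is dense since $T$ is irreducible. Over $U$ the relative Picard scheme (which exists as a separated algebraic space) has tangent spaces $H^1(Y_t,\sO_{Y_t})=0$ along its fibers and obstruction spaces inside $H^2(Y_t,\sO_{Y_t})=0$, so $\Pic_{Y/U}\to U$ is smooth of relative dimension $0$, i.e.\ \'etale; in particular $\Pic^0$ vanishes, each $\Pic(Y_t)=\NS(Y_t)$ is finitely generated, and the exponential sequence with $h^1=h^2=0$ shows $c_1\colon\Pic(Y_t)\xrightarrow{\sim}H^2(Y_t,\bZ)$. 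Thus all of $H^2$ is algebraic and $\rho(Y_t)=b_2$ is constant on $U$.

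Next I would build the identification by routing through the geometric generic fibre $Y_{\overline{\eta}}$, which avoids any monodromy ambiguity. The classical specialization map $\mathrm{sp}_0\colon\Pic(Y_{\overline{\eta}})\to\Pic(Y_0)$ is injective, isometric, and sends effective classes to effective classes; it is moreover an \emph{isomorphism} here, precisely because $\Pic_{Y/U}$ is \'etale at $0$ (equivalently, $h^2(\sO_{Y_0})=0$ makes $H^2(Y_0,\bZ)$ fully algebraic, so no classes are created under specialization). It is standard that for very general $t$ the restriction $\mathrm{sp}_t\colon\Pic(Y_{\overline{\eta}})\to\Pic(Y_t)$ is an isometric isomorphism as well. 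Setting $\phi=\mathrm{sp}_t\circ\mathrm{sp}_0^{-1}$ gives the asserted intersection-preserving identification $\Pic(Y_0)\cong\Pic(Y_t)$.

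The heart of the argument, and where very generality is essential, is that every effective class on $Y_t$ specializes to an effective class on $Y_0$. The relative Hilbert scheme $\mathrm{Hilb}(Y/U)\to U$ is a countable union of quasi-projective $U$-schemes; each component has constructible image, hence is either dominant or lands in a proper closed subset, and removing the countably many non-dominant images yields a very general locus. For such $t$, every irreducible curve $C\subseteq Y_t$ lies in a dominant component, so $C$ spreads to a relative curve whose flat limit over $0$ is an effective divisor $C_0$ with $[C_0]=\phi^{-1}([C])$ (the classes agree because this spreading realizes $\mathrm{sp}_0\circ\mathrm{sp}_t^{-1}$). Hence $\phi^{-1}\big(\overline{\mathrm{NE}}(Y_t)\big)\subseteq\overline{\mathrm{NE}}(Y_0)$. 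Since on a surface $\Nef=\overline{\mathrm{NE}}^{\vee}$ for the intersection pairing and $\phi$ is an isometry, dualizing reverses this inclusion to give $\phi\big(\Nef(Y_0)\big)\subseteq\Nef(Y_t)$, as claimed.

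I expect the main obstacle to be this last step: the bookkeeping that lets very generality do its work, namely verifying that the relevant bad loci (the non-dominant Hilbert components, and the loci where $h^0$ of each of the countably many classes exceeds its generic value) are genuinely proper closed subsets whose union is avoided. The isomorphism statement is comparatively soft once $\mathrm{sp}_0$ is shown to be an isomorphism, which is exactly the point at which the vanishing $h^1(\sO_{Y_0})=h^2(\sO_{Y_0})=0$ is indispensable.
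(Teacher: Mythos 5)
Your proposal is correct in substance and follows the same global strategy as the paper (route through the geometric generic fiber, use the vanishing $H^1(Y_0,\sO_{Y_0})=H^2(Y_0,\sO_{Y_0})=0$ to identify Picard groups isometrically, then transfer effectivity to control nef cones), but the two key steps are implemented differently. For the identification, the paper first invokes Vial's lemma to identify $Y_t$ with $Y_{\overline{\eta}}$ for very general $t$, then reduces to a discrete valuation ring mapping to $T$, \emph{henselizes}, and cites the argument of Totaro's Theorem~4.1 to get that $\Pic(Y)\to\Pic(Y_K)$, $\Pic(Y)\to\Pic(Y_0)$ and $\Pic(Y_K)\to\Pic(Y_{\overline K})$ are isomorphisms; your \'etaleness claim for $\Pic_{Y/U}$ is exactly the deformation-theoretic content of that argument, but note that ``\'etale at $0$ implies $\mathrm{sp}_0$ is an isomorphism'' literally requires the henselian reduction you skip (sections of \'etale spaces lift over henselian local rings, not arbitrary ones), so that reduction is not optional bookkeeping. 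For the nef inclusion, the paper's argument is softer than yours and entirely avoids the step you yourself flag as the main obstacle: given an effective curve $C_{\overline K}$ on the geometric generic fiber, its class descends to an effective $C_K$ (effectivity can be checked after field extension), extends as a \emph{divisor class} over the DVR since $\Pic(Y)\to\Pic(Y_K)$ is an isomorphism, and then semicontinuity of $h^0$ forces the restriction $C_0$ to $Y_0$ to be effective; pairing directly with a nef $L_0$ gives $L_{\overline K}\cdot C_{\overline K}=L_0\cdot C_0\geq 0$. This replaces your relative Hilbert scheme spreading-out plus cone duality: no countable decomposition of $\mathrm{Hilb}(Y/U)$, no dominance analysis, and no duality $\Nef=\overline{\mathrm{NE}}^{\vee}$ is needed (though your duality step is valid). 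If you do keep the Hilbert-scheme route, two corrections: the components of $\mathrm{Hilb}$ of a projective family are \emph{proper} over the base (so a dominant component has closed image equal to $U$, which is what guarantees a flat limit over $0$ exists --- ``constructible image'' alone is too weak), and the intersection-preservation claim should be justified as in the paper, via constancy of Euler characteristics of line bundles in flat families rather than asserted as part of a ``classical'' specialization package.
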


The proof leverages the argument of \cite[Theorem~4.1]{TotaroNef}, which implies that, suitably locally, the restriction maps from the relative Picard scheme of $Y\to T$ to the closed and geometric generic fibers are both isomorphisms. We then show that this identification preserves intersection products, thus controlling the nef cones.
This allows to transfer non-degenerate sequences from the special to the geometric generic fiber, which implies Theorem \ref{thm:family_intro}.

To apply Theorem \ref{thm:family_intro} in the setting of Theorem \ref{thm:LatticePol_Intro}, consider first a $\ttbar$-generic surface $Y$, and let $X$ denote its covering K3. Then, $\Num(Y)$ contains an ADE lattice $R$ of type $\tau$ and primitive closure of type $\overline{\tau}$ such that $\Num(X)$ is isomorphic to a specific sublattice $M_R$ of $L_{10}(2)\oplus R(2)$. 
Essentially, we define a $\ttbar$ Enriques surface as an Enriques surface whose covering K3 is $M_R$-polarized (in the sense of \cite{Dol96,AE25}). Moduli stacks of polarized K3 surfaces admit universal families $\sX = \sX(M_R)$. There is an involution on $\sX$ which produces a family of Enriques surfaces $\sY$. We apply Theorem \ref{thm:family_intro} to $\sY$ and obtain Theorem \ref{thm:LatticePol_Intro}.

\subsection{Computing the non-degeneracy invariant: Fano and Mukai models}

We have that $\nd(Y)$ is tied to two classes of nef divisors on an Enriques surface $Y$, called \emph{Fano} and \emph{Mukai} polarizations. To a Fano or a Mukai polarization $D$ corresponds a unique isotropic sequence, i.e. a collection $\bff \coloneqq (f_1,\ldots,f_n)$ of isotropic vectors of $\Num(Y)$ satisfying $f_i\cdot f_j = 1-\delta_{ij}$ (Lemma~\ref{lem_UniqueCanonicalFano} and \ref{lem_UniqueCanonicalMukai}). The number of half-fiber classes appearing in $\bff$ is called the \textit{non-degeneracy of} $\bff$.
Then, we define the \emph{Fano non-degeneracy invariant} $\Fnd(Y)$ to be the highest non-degeneracy of isotropic sequence corresponding to a Fano polarization (Definition~\ref{def_Fnd}). Similarly, the \emph{Mukai non-degeneracy invariant} $\Mnd(Y)$ is the highest non-degeneracy of isotropic sequence arising from a Mukai polarization (Definition~\ref{def_Mnd}). The classical $\nd(Y)$ is simply the maximum between $\Fnd(Y)$ and $\Mnd(Y)$ (Corollary~\ref{cor:max}). 

\begin{theorem}
The following hold:
\begin{itemize}

\item[(i)] The results of Theorem~\ref{thm:family_intro}, Theorem~\ref{thm:LatticePol_Intro} hold verbatim if $\nd(-)$ is replaced everywhere with $\Fnd(-)$, or if it is replaced everywhere with $\Mnd(-)$.

\item[(ii)] Let $Y_i$ denote the $i$-th $\ttbar$-generic Enriques surface as listed in \cite[Table~1]{BS22}. Then, $\Fnd(Y_i)=10$ and $\Mnd(Y_i)=9$, except for the cases listed in \eqref{eq:14cases_Intro}, where $\Fnd(Y_i) = \Mnd(Y_i) = \nd(Y_i)$.

\end{itemize}
\end{theorem}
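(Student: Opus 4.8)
The plan for (i) is to re-examine the proof of Theorem~\ref{thm:family_intro} and to observe that it only ever uses numerical data. That proof rests on Theorem~\ref{thm:family}, which provides, for very general $t$, an isomorphism $\varphi\colon\Pic(Y_0)\xrightarrow{\sim}\Pic(Y_t)$ preserving the intersection form with $\varphi(\Nef(Y_0))\subseteq\Nef(Y_t)$, and then transfers a non-degenerate sequence of half-fibers along $\varphi$. Two observations make this transfer carry over verbatim to $\Fnd$ and $\Mnd$. First, a class is a half-fiber class exactly when it is primitive, isotropic and nef; since $\varphi$ is a lattice isomorphism preserving the form and sending nef classes to nef classes, it carries half-fiber classes on $Y_0$ to half-fiber classes on $Y_t$, and can only turn non-nef isotropic classes into nef ones, never the reverse. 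Second, Fano and Mukai polarizations are singled out among nef classes by numerical conditions on $D^2$ and on the products $D\cdot f$ against isotropic classes, and their canonical isotropic sequences are determined numerically by $D$ through Lemma~\ref{lem_UniqueCanonicalFano} and Lemma~\ref{lem_UniqueCanonicalMukai}. Hence $\varphi$ sends a Fano (resp.\ Mukai) polarization on $Y_0$ to one on $Y_t$, and its canonical sequence to the canonical sequence of the image.

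Consequently the number of half-fiber classes in the transferred sequence is at least that in the original, yielding $\Fnd(Y_0)\le\Fnd(Y_t)$ and $\Mnd(Y_0)\le\Mnd(Y_t)$ for very general $t$; this is the analogue of Theorem~\ref{thm:family_intro}. The analogue of Theorem~\ref{thm:LatticePol_Intro} then follows by the same reduction as in its proof: one applies the family statement to the universal family $\sY=\sY(M_R)$, realizing a $\ttbar$ Enriques surface as a special fiber and a $\ttbar$-generic surface as a very general one. No geometric input beyond the numerical observation above is needed.

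For (ii) I would organize the computation around Corollary~\ref{cor:max}, $\nd(Y)=\max(\Fnd(Y),\Mnd(Y))$, together with the bounds $\Fnd(Y)\le\nd(Y)\le 10$ and a uniform structural bound $\Mnd(Y)\le 9$, which I would first establish from the definition of a Mukai polarization, reflecting that its canonical isotropic sequence contains at most nine half-fiber classes. For each $Y_i$ outside the list \eqref{eq:14cases_Intro}, Theorem~\ref{thm:TauBarTauFinal} gives $\nd(Y_i)=10$; since $\Mnd(Y_i)\le 9$, the maximum must be realized by the Fano invariant, forcing $\Fnd(Y_i)=10$ for free. It then remains only to prove the lower bound $\Mnd(Y_i)\ge 9$ by exhibiting a Mukai polarization of non-degeneracy $9$, which I expect to follow either from a uniform construction relating Mukai and Fano polarizations starting from a length-$10$ Fano sequence, or directly from the enumeration below.

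For the fourteen exceptional cases, where $\nd(Y_i)<10$ is tabulated, the upper bounds $\Fnd(Y_i),\Mnd(Y_i)\le\nd(Y_i)$ are automatic from $\nd=\max$, so the content is purely a lower-bound statement: in each case one must produce both a Fano and a Mukai polarization whose canonical sequence contains exactly $\nd(Y_i)$ half-fiber classes. This is carried out by the explicit finite enumeration on $\Num(Y_i)\cong U\oplus E_8$ supplied by the Brandhorst--Shimada description of the rational curves and the nef cone, following the method of \cite{MRS24Paper}: one lists the Fano and Mukai polarizations up to automorphism, forms each canonical isotropic sequence, and counts its half-fiber classes. The main obstacle is therefore not the upper bounds but these optimal constructions, namely the uniform Mukai-$9$ construction in the generic case and the case-by-case verification that, in each exceptional family, the invariant that does not automatically realize $\nd(Y_i)$ nevertheless attains it.
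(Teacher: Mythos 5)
Your part~(i) is essentially the paper's argument (Corollary~\ref{cor:family} and Theorem~\ref{thm:boundLatticePol}): the identification $\Pic(Y_0)\cong\Pic(Y_t)$ preserves the form, nefness transfers one way, and both the Fano/Mukai conditions ($D^2$, $\Phi$) and non-extensibility of length-$9$ sequences (divisibility of $\sum f_i$ by $2$, Proposition~\ref{prop_OrbitsLength9}) are purely numerical, so the transfer argument goes through for $\Fnd$ and $\Mnd$ verbatim. That much is correct.

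Part~(ii), however, contains a genuine logical gap: you invoke Theorem~\ref{thm:TauBarTauFinal} to supply the values $\nd(Y_i)$ and then declare the upper bounds $\Fnd(Y_i),\Mnd(Y_i)\le\nd(Y_i)$ ``automatic from $\nd=\max$.'' But in the paper the dependency runs in the opposite direction: Theorem~\ref{thm:TauBarTauFinal} \emph{is} Theorem~\ref{thm:ndTauBarTau}, and for the eleven new exceptional cases its upper bounds on $\nd$ are obtained precisely by bounding $\Fnd$ and $\Mnd$ from above (Propositions~\ref{prop_Fnd} and~\ref{prop_Mnd}, via Theorem~\ref{cor_ndUpperBound}). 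No independent proof of those $\nd$ values exists for these cases, so your reduction is circular, and your closing assessment that ``the main obstacle is not the upper bounds but these optimal constructions'' inverts the actual difficulty: exhibiting sequences for the lower bounds is the routine part, while the hard content you omit is constructing a finite set of orbit representatives $\sfF_Y'$, $\sfM_Y'$ for the $\aut(Y)$-action on numerical Fano/Mukai polarizations --- done by matching the Brandhorst--Shimada chamber $D_0$ with the nef cone of a Kond\=o surface with finite automorphism group (Lemma~\ref{lem_FKondoF0}) and enumerating $\sfF_Z$, $\sfM_Z$ there --- and then verifying that each representative is orthogonal to enough smooth rational curves. A second, smaller omission: for the cases with $\Mnd(Y_i)=8$ ($i=143,144,171$), a curve orthogonal to $v$ does not suffice; by Lemma~\ref{lem_uniqueOrt} one contracted curve may lie in $H=\langle g_1,\ldots,g_9\rangle^\perp$ without lowering the non-degeneracy, so Theorem~\ref{cor_ndUpperBound}~(2) requires exhibiting a contracted curve \emph{not} in $H$ (or two distinct contracted curves), a subtlety your plan never addresses. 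Finally, your hoped-for ``uniform construction'' of a non-degenerate Mukai sequence from a Fano one is not available: Lemma~\ref{lem_maximal sequences}~(1)(b) only yields $\Mnd\ge 8$ from $\nd=10$, and the paper settles $\Mnd=9$ in the generic cases only by explicit computational search.
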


While Fano and Mukai polarizations do not affect substantially the proofs of Theorem~\ref{thm:family_intro} and Theorem~\ref{thm:LatticePol_Intro}, they play a crucial role in the computations of Theorem~\ref{thm:TauBarTauFinal}, which we now outline.

If $D$ is a Fano polarization with associated sequence $\bff$, the non-degeneracy of $\bff$ is precisely $10-d$, where $d$ is the number of smooth rational curves contracted by $D$. An analogous, but more delicate, relation holds for Mukai polarizations (Lemma~\ref{lem_UniqueCanonicalMukai}): for each Mukai polarization there may exist one contracted curve which does not lower the non-degeneracy of the associated sequence. This led us to distinguishing $\Fnd$ and $\Mnd$.

Thus, finding upper bounds for $\Fnd(Y)$ and $\Mnd(Y)$ translates to showing that all Fano or Mukai polarizations contract sufficiently many smooth rational curves (Theorem~\ref{cor_ndUpperBound}). 
Using \cite{BS22}, for each $\ttbar$-generic Enriques surface in \eqref{eq:14cases_Intro} we proceed as follows:

\begin{itemize}
    \item Produce a list of representatives of Fano and Mukai polarizations modulo the action of the automorphism group. This is possible since a fundamental domain of the action of $\Aut(Y)$ on $\Nef(Y)$ is isometric to the nef cone of an Enriques surface with finite automorphism group, where we have an exhaustive list of Fano and Mukai polarizations.
    \item Intersect each representative Fano or Mukai polarization with smooth rational curves, to produce enough contracted curves and obtain the desired upper bound.
\end{itemize}

We conclude the paper with Section~\ref{sec:Mukai}, where we give examples of non-ample Mukai polarizations of maximal non-degeneracy $9$ (in contrast, a Fano polarization of maximal non-degeneracy $10$ has to be ample), and an example of an Enriques surface with an ample Mukai polarization which admits no ample Fano polarizations.

\subsection*{Acknowledgements}
We would like to thank Simon Brandhorst, Igor Dolgachev, Philip Engel, Shigeyuki Kond\=o, Emanuele Macr\`i, Gebhard Martin, Giacomo Mezzedimi, and Davide Cesare Veniani for helpful conversations.

\subsection*{Fundings}
F. Rota is supported by the European Union’s Horizon 2020 Research and Innovation Programme under the Marie Skłodowska-Curie grant agreement n. 101147384 (\href{https://cordis.europa.eu/project/id/101147384}{CHaNGe}). He acknowledges support from the EPSRC grant EP/R034826/1. 
L. Schaffler was supported by the projects ``Programma per Giovani Ricercatori Rita Levi Montalcini'' and PRIN2020KKWT53 ``Curves, Ricci flat Varieties and their Interactions''. R. Moschetti and L. Schaffler are supported by PRIN 2022 ``Moduli Spaces and Birational Geometry'' – CUP E53D23005790006 and are members of the INdAM group GNSAGA. The development of this paper was partially supported also by the project ``Numerical invariants of nodal Enriques surfaces'', funded by the INdAM group GNSAGA.

\subsection*{Open access} For the purpose of open access, the authors have applied a Creative Commons Attribution (CC:BY) licence to any Author Accepted Manuscript version arising from this submission.

\section{Enriques surfaces, non-degeneracy, Fano and Mukai polarizations}

\subsection{Enriques surfaces and lattices}

Enriques surfaces constitute a fundamental slice of the classification of complex smooth minimal algebraic surfaces of Kodaira dimension zero, and they are characterized by the vanishing of the geometric genus and irregularity. For an Enriques surface $Y$, the group of divisors modulo numerical equlvalence, usually denoted $\mathrm{Num}(Y)$, is endowed with the structure of a (non-degenerate) lattice via the intersection product among curves. For simplicity of notation, we will write $S_Y:=\mathrm{Num}(Y)$. Since distinct smooth rational curves on $Y$ cannot be numerically equivalent, we will identify them with their respective numerical equivalence classes. Independently of the Enriques surface $Y$, the lattice $S_Y$ is isometric to $U\oplus E_8$, where $U$ denotes the hyperbolic plane and $E_8$ is the negative definite lattice associated to the corresponding Dynkin diagram.

Finally, we recall that $U\oplus E_8$ is isometric to the \emph{$L_{10}$ lattice} (also denoted $E_{10}$): this is defined to be $\mathbb{Z}^{10}$ together with the intersection form associated to the canonical basis $e_1,\ldots,e_{10}$ as represented in Figure~\ref{fig:E10}: $e_i^2=-2$ and, for $i\neq j$, $e_i\cdot e_j=1$ if the corresponding vertices are joined by an edge, and zero otherwise. The isometry $L_{10}\cong U\oplus E_8$ follows by \cite[Theorem~1]{Mil58}.

\begin{figure}
\begin{tikzpicture}

\draw  (0,0)-- (1,0);
\draw  (1,0)-- (2,0);
\draw  (2,0)-- (3,0);
\draw  (3,0)-- (4,0);
\draw  (4,0)-- (5,0);
\draw  (5,0)-- (6,0);
\draw  (6,0)-- (7,0);
\draw  (7,0)-- (8,0);
\draw  (2,0)-- (2,1);

\fill [color=black] (0,0) circle (2.4pt);
\fill [color=black] (1,0) circle (2.4pt);
\fill [color=black] (2,0) circle (2.4pt);
\fill [color=black] (3,0) circle (2.4pt);
\fill [color=black] (4,0) circle (2.4pt);
\fill [color=black] (5,0) circle (2.4pt);
\fill [color=black] (6,0) circle (2.4pt);
\fill [color=black] (7,0) circle (2.4pt);
\fill [color=black] (8,0) circle (2.4pt);
\fill [color=black] (2,1) circle (2.4pt);

\draw[color=black] (2.4,1) node {$e_1$};
\draw[color=black] (0,-0.4) node {$e_2$};
\draw[color=black] (1,-0.4) node {$e_3$};
\draw[color=black] (2,-0.4) node {$e_4$};
\draw[color=black] (3,-0.4) node {$e_5$};
\draw[color=black] (4,-0.4) node {$e_6$};
\draw[color=black] (5,-0.4) node {$e_7$};
\draw[color=black] (6,-0.4) node {$e_8$};
\draw[color=black] (7,-0.4) node {$e_9$};
\draw[color=black] (8,-0.4) node {$e_{10}$};

\end{tikzpicture}
\caption{The $L_{10}$ lattice. The labeling of the roots is compatible with \cite{BS22}.}
\label{fig:E10}
\end{figure}
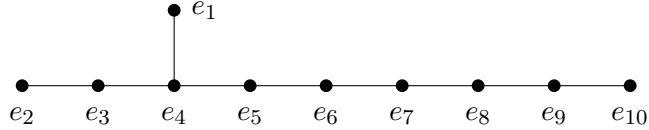

\subsection{Isotropic sequences on Enriques surfaces}

Recall that an isotropic sequence on an Enriques surface $Y$ is a collection of classes $(f_1,\ldots,f_n)$ in $S_Y$ such that $f_i \cdot f_j = 1-\delta_{ij}$ for all $i,j$. Since the vectors $f_i$ are independent, the maximum length of an isotropic sequence is $10$. We say that an isotropic sequence $(f_1,\ldots, f_n)$ is \emph{non-degenerate} if each $f_i$ is nef. The maximum length of non-degenerate isotropic sequences is an important geometric invariant for an Enriques surface.

\begin{definition}
\label{def:nd-invariant}
Let $Y$ be an Enriques surface. The \textit{non-degeneracy} invariant of $Y$, $\nd(Y)$, is the maximum integer $n$ for which there exists a non-degenerate isotropic sequence of length $n$.
\end{definition}

The Weyl group $W(L_{10})$ is the group generated by the reflections across the $(-2)$-vectors of $L_{10}$. Then, $W(L_{10})$ acts on the set of isotropic sequences. More precisely, the following holds.

\begin{proposition}[{\cite[Proposition~6.1.1]{DK25}}]\label{prop_transitivity}
    The Weyl group $W(L_{10})$ acts transitively on the set of isotropic sequences of length $k \neq 9$ and it has exactly two orbits of isotropic sequences of length $9$. 
\end{proposition}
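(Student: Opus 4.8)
The plan is to translate the statement into a purely lattice-theoretic classification of isotropic sequences in $L_{10}\cong U\oplus E_8$ up to $W(L_{10})$, and to reduce this, one vector at a time, to the classification of sub-root-systems of type $A$ inside $E_8$.

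First I would fix the leading vector. Since $f_1\cdot f_2=1$ forces $f_1$ to be primitive, and since $L_{10}$ is even unimodular of signature $(1,9)$, the group $O(L_{10})$ acts transitively on primitive vectors of any fixed norm, in particular on primitive isotropic vectors. Using $O(L_{10})=\{\pm\mathrm{id}\}\times W(L_{10})$, and that $-\mathrm{id}$ interchanges the two halves of the positive cone while all terms of an isotropic sequence lie in one half (as $f_i\cdot f_j>0$), I get that $W(L_{10})$ is transitive on primitive isotropic vectors in a fixed half. Hence every isotropic sequence is $W(L_{10})$-equivalent to one whose first term is a fixed $f_1$ with $\langle f_1,e\rangle\cong U$ and $L_{10}=\langle f_1,e\rangle\oplus E_8$. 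By orbit counting, the $W(L_{10})$-orbits of length-$n$ sequences correspond to the orbits of sequences with first term $f_1$ under $\mathrm{Stab}_{W(L_{10})}(f_1)$.

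Next I would make the reduction to $E_8$ explicit. Writing $f_i=a_if_1+e+v_i$ with $v_i\in E_8$ (the coefficient of $e$ is forced to equal $f_i\cdot f_1=1$, and $a_i=-\tfrac12 v_i^2$ by $f_i^2=0$), the conditions $f_i\cdot f_j=1$ become $(v_i-v_j)^2=-2$ for $i\neq j$. Thus a length-$n$ sequence with first term $f_1$ is the same datum as a configuration of $n-1$ points $v_2,\dots,v_n\in E_8$ that pairwise differ by roots. The stabilizer $\mathrm{Stab}_{W(L_{10})}(f_1)$ is exactly the affine Weyl group $W(E_8)\ltimes E_8$: the reflections in roots of $E_8$ give the linear part $W(E_8)$, and the translations come from the Eichler--Siegel transvections $E_{f_1,w}$ with $w\in E_8$, which are reflections in the affine roots $\alpha+kf_1$ (of square $\alpha^2=-2$ since $f_1$ is isotropic). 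Using the translations I normalize $v_2=0$; then $v_3,\dots,v_n$ become roots with $v_i\cdot v_j=-1$ for $i\neq j$, and the residual symmetry is the linear group $W(E_8)$.

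Finally I would identify the resulting classification. A set of $m:=n-2$ roots of $E_8$ with pairwise products $-1$ is linearly independent, has Gram determinant $\pm(m+1)$, and spans a sub-root-system of type $A_m$; moreover all such stars inside a fixed $A_m$ are conjugate under $W(A_m)=S_{m+1}$ (they are the choices of an apex, permuted transitively). Consequently the $W(L_{10})$-orbits of length-$n$ isotropic sequences are in bijection with the $W(E_8)$-orbits of sub-root-systems of type $A_{n-2}$ in $E_8$, for $n-2$ in the admissible range $0\le n-2\le 8$. The statement then follows from the classical count of such orbits: there is a single orbit of $A_m$ in $E_8$ for every admissible $m\neq 7$ (equivalently $n\neq 9$), while there are exactly two orbits of $A_7$, distinguished by the rank-one orthogonal complement of the span, which is $\langle -2\rangle$ in one case and $\langle -8\rangle$ in the other (equivalently, the discriminant of the primitive closure of $\langle f_1,\dots,f_9\rangle$ equals $2$ or $8$). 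The main obstacle is precisely this last point: showing that $A_7\subset E_8$ has exactly two $W(E_8)$-orbits --- neither one nor three --- by exhibiting both embeddings and proving that this rank-one complement is a complete invariant. All other values of $m$ require only the easier fact that the relevant $A_m$ is unique up to conjugacy, which can be extracted from the Borel--de Siebenthal analysis of sub-root-systems of $E_8$.
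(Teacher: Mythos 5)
The paper does not prove this proposition at all: it is quoted verbatim from Dolgachev--Kond\=o \cite[Proposition~6.1.1]{DK25}, so there is no in-paper argument to compare against. Your proposal is, in effect, a reconstruction of the classical proof behind the citation, and its skeleton is sound: transitivity of $O(L_{10})$ on primitive vectors of fixed norm holds since $L_{10}$ is even unimodular indefinite (Eichler); $O(L_{10})=W(L_{10})\times\{\pm\mathrm{id}\}$ holds because the $T_{2,3,7}$ Coxeter diagram has no symmetries; the stabilizer of a primitive isotropic $f_1$ is indeed $W(E_8)\ltimes E_8$, with the translations realized as Eichler transvections $E_{f_1,w}=s_{w+f_1}\,s_w$ for roots $w$ (and $O(E_8)=W(E_8)$ closes the count); and your coordinate computation correctly turns a sequence into points of $E_8$ with pairwise differences of square $-2$, normalized so that $v_2=0$.

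Two points need attention. First, a genuine (though repairable) error: $W(A_m)\cong S_{m+1}$ is \emph{not} transitive on the stars inside a fixed $A_m$ for $m\geq 2$. In the model $A_m\subseteq \mathbb{Z}^{m+1}$ there are ``head-stars'' $(e_0-e_{i})_i$ and ``tail-stars'' $(e_{i}-e_0)_i$; each family is a single simply transitive $S_{m+1}$-orbit, and $-\mathrm{id}\notin W(A_m)$, so there are two orbits, not one. The patch is available inside your own framework: $-\mathrm{id}_{E_8}$ is the longest element of $W(E_8)$, stabilizes every subsystem, and swaps the two kinds of stars, so the full stabilizer of the subsystem in $W(E_8)$ is transitive on ordered stars and your bijection between $W(L_{10})$-orbits of length-$n$ sequences and $W(E_8)$-orbits of $A_{n-2}$-subsystems survives. (A smaller conventional point: for $k=1$ one must take primitive isotropic vectors in a fixed half-cone, as in \cite{DK25}, since $W(L_{10})$ preserves the two halves.) Second, the terminal counting input --- a unique $W(E_8)$-class of $A_m$ for $m\neq 7$ and exactly two classes of $A_7$, distinguished by the rank-one complement $\langle -2\rangle$ versus $\langle -8\rangle$ --- is the actual content of the cited result, and Borel--de Siebenthal alone does not settle conjugacy of subsystems (it produces candidates, not conjugacy classes); you flag this honestly as the main obstacle. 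It can be closed cleanly with Nikulin's theory: the unique even index-$2$ overlattice of $A_7$ is $E_7$, so the primitive closure is either $A_7$ (complement $\langle -8\rangle$) or $E_7$ (complement $\langle -2\rangle$), both embeddings exist and are unique up to $O(E_8)=W(E_8)$. With the star-transitivity slip corrected and that classification supplied, your argument is a complete and valid proof of the quoted statement.
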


This motivates the introduction of the following notations.

\begin{definition}

\

\begin{itemize}
    \item Let $\Oext$ be the set of length $9$ isotropic sequences which can be extended to an isotropic sequence of length $10$;
    \item Let $\One$ be the set of length $9$ isotropic sequences which cannot be extended to an isotropic sequence of length $10$.
\end{itemize}
\end{definition}

Combining results in \cite{CDL25}, we have that the two orbits $\Oext$ and $\One$ can be distinguished as follows.

\begin{proposition}\label{prop_OrbitsLength9}
Let $\bff = (f_1,\ldots,f_9)$ be an isotropic sequence of length $9$. Then, the following hold:
\begin{itemize}

\item $\bff\in\Oext$ if and only if $\sum_{i=1}^9f_i$ is primitive in $S_Y$;

\item $\bff\in\One$ if and only if $\sum_{i=1}^9f_i$ is divisible by $2$ in $S_Y$.

\end{itemize}
\end{proposition}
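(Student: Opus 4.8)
The plan is to analyze the single class $S \coloneqq \sum_{i=1}^9 f_i \in S_Y$ and to show that extendability of $\bff$ is equivalent to primitivity of $S$; the statement about $\One$ will then follow formally. First I would record the numerics: since $f_i\cdot f_j = 1-\delta_{ij}$ we have $S\cdot f_i = 8$ for every $i$ and $S^2 = 9\cdot 8 = 72$. If a prime $p$ divides $S$ in $S_Y$, then $p \mid S\cdot f_i = 8$, forcing $p=2$; and $4\mid S$ is impossible, as it would give $S^2/16 = 72/16\notin\mathbb Z$. Hence every length‑$9$ isotropic sequence satisfies \emph{exactly one} of: $S$ is primitive, or $S = 2w$ with $w$ primitive and $w^2 = 18$. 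Since $\Oext$ and $\One$ partition the length‑$9$ isotropic sequences by definition (consistent with the two orbits of Proposition~\ref{prop_transitivity}), it suffices to prove $\bff\in\Oext \iff S$ is primitive. The easy direction is immediate: if $\bff$ extends to $(f_1,\dots,f_{10})$, then $S\cdot f_{10} = \sum_{i=1}^9 f_i\cdot f_{10} = 9$ is odd, which rules out $S=2w$, so $S$ is primitive.

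For the converse I would work inside the lattice $N \coloneqq \langle f_1,\dots,f_9\rangle \subseteq S_Y$, whose Gram matrix $J-I$ (all‑ones minus identity of size $9$) has determinant $8$ and signature $(1,8)$. Its dual basis is $f_i^\ast = \tfrac18 S - f_i$, so $\operatorname{disc}(N)$ is cyclic of order $8$, generated by $g\coloneqq[\tfrac18 S]$ with $q(g) = S^2/64 = 9/8 \in \mathbb Q/2\mathbb Z$. Since the only nonzero isotropic class of $\operatorname{disc}(N)$ is $[\tfrac12 S] = [4g]$, primitivity of $S$ (i.e. $\tfrac12 S\notin S_Y$) is exactly the statement that $N$ is saturated in the unimodular lattice $S_Y$; in that case $N^\perp = \langle r\rangle$ with $r^2 = -8$ (negative definite by signature, of discriminant $8$ to match $N$). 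I then propose the candidate extending class
\[
f_{10} \coloneqq \tfrac18 S + \tfrac38 r .
\]
A direct check gives $f_{10}\cdot f_i = \tfrac18(S\cdot f_i) = 1$ and $f_{10}^2 = \tfrac1{64}S^2 + \tfrac9{64}r^2 = \tfrac{72-72}{64} = 0$, so the only genuine issue — the crux of the argument — is to show that $f_{10}$ actually lies in the integral lattice $S_Y$.

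This integrality is where I would invoke the theory of discriminant forms, and it is the step I expect to be delicate. Because $S_Y$ is unimodular and contains $N\oplus N^\perp$ with finite index, it corresponds to an isotropic subgroup of $\operatorname{disc}(N)\oplus\operatorname{disc}(N^\perp)$ which is the graph of an anti‑isometry $\gamma\colon \operatorname{disc}(N)\to\operatorname{disc}(N^\perp)$, both groups being $\mathbb Z/8$. Writing $h\coloneqq[\tfrac18 r]$, with $q(h) = -1/8$, the condition $q(\gamma(g)) = -q(g)$ forces $\gamma(g) = c\,h$ with $c^2\equiv 9\pmod{16}$, i.e. $c\equiv\pm 3\pmod 8$. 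Thus the glue group contains $(g,\pm 3h)$, represented precisely by $\tfrac18 S \pm \tfrac38 r$, so one of these classes lies in $S_Y$; by the computation above it is isotropic and pairs to $1$ with each $f_i$, hence extends $\bff$. This proves $S$ primitive $\implies \bff\in\Oext$, completing the equivalence $\bff\in\Oext \iff S$ primitive. Finally, $\bff\in\One$ if and only if $\bff\notin\Oext$, if and only if $S$ is not primitive, if and only if $S$ is divisible by $2$, by the dichotomy established at the outset. As an alternative to the discriminant‑form computation, one could instead extract the integrality of $f_{10}$ from explicit orbit representatives in \cite{CDL25}.
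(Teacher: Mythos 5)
Your proof is correct, but it takes a genuinely different route from the paper. The paper's proof is a two-line reduction: since $\Oext$ and $\One$ partition the length-$9$ sequences and, by Proposition~\ref{prop_transitivity}, form exactly the two $W(L_{10})$-orbits, it suffices to check the primitivity/divisibility of the sum on one explicit representative of each orbit, and the paper does this using the sequences $\sff_1,\ldots,\sff_9$ of \cite[Proposition~1.5.3]{CDL25} and $\sfg_1,\ldots,\sfg_9$ from the proof of \cite[Lemma~3.5.3]{CDL25} (the Weyl group, acting by isometries, preserves both extendability and divisibility, so this checking is legitimate). You instead argue intrinsically: the dichotomy ``$S$ primitive or $S=2w$'' via $S\cdot f_i=8$, $S^2=72$; the identification of $\operatorname{disc}(N)\cong\bZ/8$ with $q(g)=9/8$ and its unique nonzero isotropic element $4g=[\tfrac12 S]$, so that primitivity of $S$ equals saturation of $N$; and then the glue-group/anti-isometry computation forcing $\gamma(g)=\pm3h$, which makes one of $\tfrac18 S\pm\tfrac38 r$ integral and exhibits it as the extending isotropic class (your verifications $f_{10}\cdot f_i=1$, $f_{10}^2=0$, and the congruence $c^2\equiv 9\pmod{16}$ all check out, as does the rank-one computation $N^\perp=\langle r\rangle$, $r^2=-8$). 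Notably, your argument uses only the definitional fact that $\Oext\sqcup\One$ is a partition and is independent of the two-orbit classification of Proposition~\ref{prop_transitivity} and of the explicit representatives in \cite{CDL25}; it even yields an explicit formula for the extending vector. What the paper's approach buys is brevity and reuse of the literature; what yours buys is a self-contained, constructive proof that would in fact give an independent re-derivation of the extendability criterion. The one point to state carefully if you write this up is the standard Nikulin correspondence you invoke: since $S_Y$ is even unimodular and $N$, $N^\perp$ are both primitive, $S_Y/(N\oplus N^\perp)$ is the graph of an anti-isometry of the \emph{quadratic} discriminant forms (not merely the bilinear ones), which is exactly what your congruence modulo $16$ uses.
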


\begin{proof}
By Proposition~\ref{prop_transitivity}, it is enough to exhibit an extensible (resp. non-extensible) isotropic sequence of length $9$ such that the sum of its elements is primitive (resp. divisible by $2$).

We have that the sequence $\sff_1,\ldots,\sff_9$ in \cite[Proposition~1.5.3]{CDL25} is extensible and it can be verified that $\sff_1+\ldots+\sff_9$ is primitive directly from the definition of the $\sff_i$. On the other hand, $\sfg_1,\ldots,\sfg_9$ in the proof of \cite[Lemma~3.5.3]{CDL25} is non-extensible and it can be verified again directly that $\sfg_1+\ldots+\sfg_9$ is divisible by $2$.
\end{proof}

\begin{proposition}\label{prop_extendSequenceTwoWays}
Let $g_1,\ldots,g_k$ be an isotropic sequence of length $k<9$. Then, the sequence can be extended to an isotropic sequence $(g_1,\ldots,g_k,g_{k+1},\ldots,g_{10})$ of length $10$, and to an isotropic sequence $(g_1,\ldots,g_k,g'_{k+1},\ldots,g'_9)\in\One$.
\end{proposition}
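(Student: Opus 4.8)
The plan is to reduce both assertions to the transitivity statement of Proposition~\ref{prop_transitivity}, transporting a suitable model sequence by the Weyl group $W(L_{10})$. The mechanism is the same in both cases: any $w\in W(L_{10})$ acts as a lattice isometry of $S_Y$, so it carries isotropic sequences to isotropic sequences (intersection products are preserved) and, being a lattice automorphism, it preserves divisibility of elements. Since $k<9$, Proposition~\ref{prop_transitivity} tells us that all isotropic sequences of length $k$ lie in a single $W(L_{10})$-orbit, which is exactly what lets us move a truncated model sequence onto $(g_1,\ldots,g_k)$.

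First I would handle the extension to length $10$. I would fix one isotropic sequence $\bh=(h_1,\ldots,h_{10})$ of length $10$; this exists, for instance, because the sequence $\sff_1,\ldots,\sff_9$ recalled in the proof of Proposition~\ref{prop_OrbitsLength9} lies in $\Oext$ and hence admits a tenth term. Its initial segment $(h_1,\ldots,h_k)$ is an isotropic sequence of length $k$, so by Proposition~\ref{prop_transitivity} there is $w\in W(L_{10})$ with $wh_i=g_i$ for $1\le i\le k$. Then $(g_1,\ldots,g_k,wh_{k+1},\ldots,wh_{10})$ is again an isotropic sequence, because $w$ preserves the form, and it extends the given sequence to length $10$.

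For the second extension I would argue in the same way, but starting from a model in $\One$. The proof of Proposition~\ref{prop_OrbitsLength9} exhibits a non-extensible sequence $\sfg_1,\ldots,\sfg_9\in\One$. Truncating to length $k$ and applying Proposition~\ref{prop_transitivity} produces $w\in W(L_{10})$ with $w\sfg_i=g_i$ for $1\le i\le k$, and $(g_1,\ldots,g_k,w\sfg_{k+1},\ldots,w\sfg_9)$ is an isotropic sequence of length $9$. It remains to see that it still lies in $\One$: by Proposition~\ref{prop_OrbitsLength9} the sum $\sum_{i=1}^9\sfg_i$ is divisible by $2$, hence so is $\sum_{i=1}^9 w\sfg_i=w\!\left(\sum_{i=1}^9\sfg_i\right)$ since $w$ is a lattice automorphism, and a second application of Proposition~\ref{prop_OrbitsLength9} places the new sequence in $\One$.

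The only genuinely delicate point is this last verification: because $W(L_{10})$ is not transitive on length-$9$ sequences, transitivity alone cannot guarantee that the extension lands in $\One$ rather than in $\Oext$. The divisibility criterion of Proposition~\ref{prop_OrbitsLength9}, together with the $W(L_{10})$-invariance of divisibility, is precisely what resolves this and pins down the correct orbit. Everything else is a direct transport of a fixed model sequence along the (unique) orbit of length-$k$ sequences.
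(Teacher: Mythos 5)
Your proof is correct and takes essentially the same route as the paper: the paper also extends $(g_1,\ldots,g_k)$ by transporting a fixed model sequence in $\One$ along the transitive $W(L_{10})$-action on length-$k$ sequences (Proposition~\ref{prop_transitivity}), and for the length-$10$ extension it simply cites \cite[Corollary~6.1.2]{DK25}, which you instead prove directly by the identical transport mechanism. The only cosmetic difference is that you certify membership in $\One$ via the divisibility criterion of Proposition~\ref{prop_OrbitsLength9}, while the paper implicitly uses that $\One$ is $W(L_{10})$-stable (an extension of the transported sequence would pull back under $w^{-1}$ to an extension of the non-extensible model); both verifications are sound.
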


\begin{proof}
The extension to a length $10$ sequence is \cite[Corollary~6.1.2]{DK25}. The second statement has an analogous proof: consider an isotropic sequence $(f_1,\ldots,f_9) \in \One$ and use Proposition~\ref{prop_transitivity} to find $w\in W(L_{10})$ such that $w(g_i)=f_i$ for $i=1,\ldots, k$. Then, set $g_{k+i}'=w^{-1}f_{k+i}$ for $i=1,\ldots,9-k$.
\end{proof} 

Let $W^\mathrm{nod}_Y$ denote the subgroup of $W(L_{10})$ generated by reflections with respect to classes of $(-2)$-curves on an Enriques surface $Y$. Then, $\wnod_Y$ acts on the set of isotropic sequences, and every orbit of this action contains a distinct representative, called a \textit{canonical} isotropic sequence. This is described explicitly in the following proposition.

\begin{proposition}[{\cite[Proposition~6.1.5]{DK25}}]
\label{lem:CanonicalSequences}
Let $Y$ be an Enriques surface and suppose that $(f_1,\ldots,f_k)$ is an isotropic sequence in $S_Y$.
Then, there exists a unique $w\in \wnod_Y$ such that, up to reordering:
\begin{itemize}

\item the sequence $(f_1',\ldots,f_k')\coloneqq (w(f_1),\ldots,w(f_k))$ contains a non-degenerate subsequence $(f_{i_1}',\ldots,f_{i_c}')$ with $1=i_1< \ldots <i_c$;

\item for any $i_s<i<i_{s+1}$ there are smooth rational curves $R^{i_s}_1,\ldots,R^{i_s}_{i-i_s}$ such that 
\[
f_i'=f_{i_s}' + R^{i_s}_1 + \ldots + R^{i_s}_{i-i_s} \in W^\mathrm{nod}_Y\cdot f_{i_s}.
\]
Here, $R^{i_s}_1 + \ldots + R^{i_s}_{i-i_s}$ is a chain of rational curves with dual graph $A_{i-i_s}$. Moreover, $f_{i_s}'$ is disjoint from $R_{2}^{i_s},\ldots,R^{i_s}_{i-i_s}$ and $f_{i_s}'\cdot R^{i_s}_{1}=1$.
\end{itemize}
The isotropic sequence $(f_1',\ldots,f_k')$ is said to be \emph{canonical} and a smooth rational curve $R_j^{i_s}$ as above is said to \emph{appear} in $(f_1',\ldots,f_k')$.
\end{proposition}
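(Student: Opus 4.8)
The plan is to reduce everything to two standard facts about the action of $\wnod_Y$ on the positive cone $\cC^+\subseteq S_Y\otimes\bR$: first, that $\Nef(Y)$ is a fundamental domain for this action, and second, that consequently each $\wnod_Y$-orbit of an effective isotropic class contains a unique nef representative $\phi$, with $f-\phi$ an effective $\bZ_{\ge 0}$-combination of $(-2)$-curves for every $f$ in the orbit. The computational building block I would record first is that attaching an $A_m$-chain to a nef class is a product of reflections. Writing $s_R(x)=x+(x\cdot R)\,R$ for the reflection in a $(-2)$-class $R$, if $\phi$ is nef isotropic and $R_1,\dots,R_m$ is a chain of $(-2)$-curves of type $A_m$ with $\phi\cdot R_1=1$ and $\phi\cdot R_j=0$ for $j\ge 2$, then an immediate induction gives
\[
s_{R_m}\cdots s_{R_1}(\phi)=\phi+R_1+\dots+R_m,
\]
and expanding the square shows $(\phi+R_1+\dots+R_m)^2=0$ and $(\phi+R_1+\dots+R_m)\cdot\phi=1$. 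Thus every class of the shape asserted in the statement is isotropic, lies in $\wnod_Y\cdot\phi$, and pairs to $1$ with its block leader; the content of the proposition is that every isotropic sequence can be brought into exactly this normal form, and essentially only this one.

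For \textbf{existence} I would proceed by induction on $k$, processing the classes in order. One first makes $f_1$ nef: as long as $f_1$ is not nef there is a $(-2)$-curve $R$ with $f_1\cdot R<0$, and replacing the whole sequence by $s_R(\bff)$ strictly lowers $f_1\cdot A$ for a fixed ample class $A$ (here $R$ effective forces $R\cdot A>0$), so the process terminates and yields the first leader $f_{i_1}'=f_1'$. The engine for the remaining classes is a dichotomy dictated by uniqueness of nef representatives: a class lying in the orbit $\wnod_Y\cdot f_{i_s}$ of an already-fixed nef leader cannot itself be made nef without displacing $f_{i_s}$ (that orbit has $f_{i_s}'$ as its only nef point, and two distinct members of an isotropic sequence cannot both equal it), so such a class is forced into the block of $f_{i_s}$; a class in a genuinely new orbit instead becomes a new nef leader, which is achieved by reflections in $(-2)$-curves orthogonal to the previously fixed leaders (these fix those leaders and preserve all intersection numbers with them). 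Reordering so that classes sharing an orbit are consecutive produces the block decomposition, with the leaders $f_{i_1}',\dots,f_{i_c}'$ forming a non-degenerate subsequence. A clean way to package the single global $w$ is to fix weights $\varepsilon_1\gg\dots\gg\varepsilon_k>0$ and take the $w$ carrying $v=\sum_i\varepsilon_i f_i$ (which satisfies $v^2>0$, hence lies in the interior of $\cC^+$) into $\Nef(Y)$: the dominant weight forces $f_1'$ nef, and the rapid decay renders the negative pairing of each chain class against its own chain curves harmless, since it is dominated by the positive pairing of the earlier leader.

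The step I expect to be the \textbf{main obstacle} is pinning down the precise chain structure inside a block, namely that for $i_s<i<i_{s+1}$ the effective difference $f_i'-f_{i_s}'$ is an $A$-type chain $R_1+\dots+R_{i-i_s}$ with unit coefficients, $f_{i_s}'\cdot R_1=1$, and $f_{i_s}'\cdot R_j=0$ for $j\ge 2$, rather than an arbitrary effective combination of $(-2)$-curves. Writing $f_i'=f_{i_s}'+D$ with $D=\sum_R a_R R$ effective and $f_i'$ in the orbit $\wnod_Y\cdot f_{i_s}'$, isotropy $(f_i')^2=0$ gives $D^2=-2\,(f_{i_s}'\cdot D)$, while $f_{i_s}'\cdot f_i'=1$ forces $f_{i_s}'\cdot D=1$; since $f_{i_s}'$ is nef this means exactly one curve $R_1$ in the support of $D$ meets $f_{i_s}'$, with multiplicity one, all others being orthogonal to $f_{i_s}'$, and $D^2=-2$. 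One then has to show that such an effective $(-2)$-class, sitting in the orbit of $f_{i_s}'$, is forced to be a chain of type $A$ attached to $f_{i_s}'$ at one end; organizing the members of each block by the length of this chain yields the nested presentation $f_{i_s}',\,f_{i_s}'+R_1,\,f_{i_s}'+R_1+R_2,\dots$ with the claimed intersection pattern. This lattice-combinatorial rigidity, and not the termination of the reduction, is the technical heart of the argument.

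Finally, for \textbf{uniqueness} I would show that the normal form is an invariant of $\bff$. The multiset of $\wnod_Y$-orbits occurring among the $f_i$, and hence the nef leaders (each the unique nef point of its orbit) and the chain lengths, are intrinsic, so the tuple $(f_1',\dots,f_k')$ is determined up to the permitted reordering. The element $w$ producing it is then determined accordingly: if $w$ and $\tilde w$ both realize the normal form, then $u=\tilde w w^{-1}\in\wnod_Y$ fixes every leader $f_{i_s}'$ and, by taking successive differences along each block, every chain curve $R_j$, so that $u$ acts trivially on the configuration these classes span. This recovers the uniqueness asserted in the statement.
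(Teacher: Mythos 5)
A preliminary remark: the paper offers no proof of this proposition at all --- it is imported verbatim from \cite[Proposition~6.1.5]{DK25} --- so your attempt can only be measured against the standard argument there, which is an induction reducing the members of the sequence one at a time by reflections in $(-2)$-curves, with careful bookkeeping so that later reflections do not disturb the part already normalized. Against that benchmark, your proposal has a genuine gap at exactly the step you call the technical heart, and the route you propose for it cannot work: the ``lattice-combinatorial rigidity'' is false. Your derived constraints --- $f_i'=f_{i_s}'+D$ with $D$ effective, $D^2=-2$, $f_{i_s}'\cdot D=1$, a single attaching component of coefficient one --- do \emph{not} force $D$ to be a reduced $A$-chain. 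Concretely, let $\phi$ be a nef isotropic class and $R_0,R_1,R_2,R_3$ a $D_4$-configuration of $(-2)$-curves (center $R_0$) with $\phi\cdot R_1=1$ and $\phi\cdot R_0=\phi\cdot R_2=\phi\cdot R_3=0$; such data occurs on Enriques surfaces, e.g.\ for an $I_0^*$ fiber $2R_0+R_1+R_2+R_3+R_4$ of one pencil and a half-fiber $\phi$ of a second pencil with $\phi\cdot R_1=\phi\cdot R_4=1$. Put $D=R_1+2R_0+R_2+R_3$ and $f=\phi+D$. Then $D^2=-2$, $\phi\cdot D=1$, $R_1$ is the unique attaching component and has coefficient one, $f$ is effective isotropic with $\phi\cdot f=1$, and $f\in\wnod_Y\cdot\phi$: indeed $f\cdot R_0=-1$, and $s_{R_3}s_{R_2}s_{R_0}$ carries $f$ to the $A_2$-chain class $\phi+R_1+R_0=s_{R_0}s_{R_1}(\phi)$. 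So $(\phi,f)$ is an isotropic sequence meeting every constraint in your analysis whose second member is not in chain form, and your partial normalization (leaders made nef, remaining classes grouped into blocks by orbit) can terminate on it. The chain shape is therefore not forced; it must be \emph{produced} by further reflections taken from the stabilizer of the leader (here $s_{R_0},s_{R_2},s_{R_3}$, all orthogonal to $\phi$), and organizing this reduction compatibly across all blocks is precisely the content of the proof in \cite{DK25} that your sketch replaces with an unprovable claim.

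Two further soft spots. First, in the existence induction you assert that a class in a new orbit can be made nef ``by reflections in $(-2)$-curves orthogonal to the previously fixed leaders'', but a reducing curve $R$ with $f_i\cdot R<0$ may satisfy $R\cdot f_{i_t}'=1$ (consistent with $R$ being a component of $f_i$, since $f_i\cdot f_{i_t}'=1$), so orthogonality is not automatic and must be argued --- the same bookkeeping issue as above. Your weighted-sum packaging does not repair this: you verify only the easy implication (canonical form makes $w\bigl(\sum_i\varepsilon_if_i\bigr)$ nef), while the needed converse faces a quantifier problem, since the weights must be fixed before $w$, yet $w$ ranges over an infinite group and the numbers $w(f_i)\cdot R$ are unbounded, so ``sufficiently rapid decay'' is not well defined; a lexicographic minimization of $\bigl(w(f_1)\cdot A,\ldots,w(f_k)\cdot A\bigr)$ against a fixed ample $A$ would be the correct scaffold. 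Second, your uniqueness argument only shows that $u=\tilde{w}w^{-1}$ fixes the classes of the canonical sequence, which does not yield $u=\mathrm{id}$ when these classes fail to span $S_Y\otimes\bQ$: already for $k=1$ with $f_1$ nef, any $s_R$ with $R\cdot f_1=0$ also realizes the normal form. Uniqueness genuinely lives at the level of the canonical sequence $(f_1',\ldots,f_k')$ --- which your intrinsic-invariants observation does capture, and which is all the paper ever uses (via Lemmas~\ref{lem_UniqueCanonicalFano} and~\ref{lem_UniqueCanonicalMukai}) --- but as a proof of the statement's literal ``unique $w$'' your final step is incomplete.
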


\begin{definition}\label{def_ndForSequence}
The number $c$ of nef classes in a canonical isotropic sequence $\bff$ is the \textit{non-degeneracy} of the sequence. In this case, we say that $\bff$ is $c$-non-degenerate.
\end{definition}

\begin{lemma}\label{lem_maximal sequences}
    Let $\bff \coloneqq(f_1,\ldots,f_m)$ be a canonical isotropic sequence with non-degeneracy $c$. Then either:
    \begin{enumerate}
        \item $m\leq 8$. In this case, both of the following hold:
        \begin{enumerate}
            \item $\bff$ can be extended to a canonical sequence $(f_1,\ldots,f_m,f'_{m+1},\ldots,f'_{10})$ of length $10$ and non-degeneracy $c'\geq c$, and
            \item $\bff$ can be extended to a canonical sequence $(f_1,\ldots,f_m,f''_{m+1},\ldots,f''_{9}) \in \One$ with non-degeneracy $c''\geq c$.
        \end{enumerate}
        \item $m=9$. Then either:
        \begin{enumerate}
            \item $\bff \in \Oext$ and there exists a canonical isotropic sequence $(f_1,\ldots, f_9,f'_{10})$ with non-degeneracy $c'\geq c$, or
            \item $\bff \in \One$.
        \end{enumerate}
        \item $m=10$, so $\bff$ has maximal length.
    \end{enumerate}
\end{lemma}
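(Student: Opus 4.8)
The plan is to case-split on the length $m$, extracting the extendability from the results already established and isolating the only genuinely new ingredient, namely that the non-degeneracy cannot drop under extension.

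The case $m=10$ is immediate: an isotropic sequence has length at most $10$, so $\bff$ is already maximal. For $m=9$, Proposition~\ref{prop_transitivity} gives exactly two $W(L_{10})$-orbits of length-$9$ isotropic sequences, which by definition are $\Oext$ and $\One$; thus $\bff$ lies in precisely one of them. If $\bff\in\One$ we are in case (2)(b). If $\bff\in\Oext$, then by definition the underlying isotropic sequence extends to one of length $10$, and after using Proposition~\ref{lem:CanonicalSequences} to place the new vector in canonical position we land in case (2)(a); the inequality $c'\geq c$ is the monotonicity claim below. For $m\leq 8$ I would invoke Proposition~\ref{prop_extendSequenceTwoWays} directly: its first assertion extends $\bff$ (retaining $f_1,\ldots,f_m$) to a length-$10$ isotropic sequence, giving (1)(a), and its second assertion extends $\bff$ to a length-$9$ sequence in $\One$, giving (1)(b). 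In every case, passing to the canonical form via Proposition~\ref{lem:CanonicalSequences}, the whole statement reduces to one assertion.

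That assertion is: \emph{if a canonical sequence of non-degeneracy $c$ is enlarged to a longer isotropic sequence, the canonical form of the enlargement has non-degeneracy at least $c$.} To prove it I would attach to any isotropic sequence $\bfh=(h_1,\ldots,h_L)$ the quantity
\[
\mu(\bfh) \;=\; \max_{w\in\wnod_Y}\bigl|\{\, j : w(h_j)\ \text{is nef}\,\}\bigr|.
\]
Monotonicity of $\mu$ under appending vectors is formal: a $w$ realizing $\mu(\bff)$ on the initial segment makes at least $\mu(\bff)$ of the $w(h_j)$ nef, so $\mu(\bfh)\geq\mu(\bff)$. The crux is to identify $\mu$ with the non-degeneracy of Definition~\ref{def_ndForSequence}, i.e.\ that the canonical form realizes the \emph{maximal} number of simultaneously nef classes. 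This follows from the block description of Proposition~\ref{lem:CanonicalSequences}: inside one block $f'_{i_s},\, f'_{i_s}+R^{i_s}_1,\,\ldots$ the members differ by effective sums of $(-2)$-curves supported on an $A_\bullet$-chain, so no element of $\wnod_Y$ can render two of them nef simultaneously; hence each block contributes exactly one nef class and $\mu(\bfh)$ equals the number of blocks, which is the non-degeneracy $c^{*}$ of the canonical form of $\bfh$. Chaining the (in)equalities, $c^{*}=\mu(\bfh)\geq\mu(\bff)=c$, which is exactly $c'\geq c$ and $c''\geq c$. Because $\bff$ is already canonical and is kept as the initial segment of the abstract extension produced by Proposition~\ref{prop_extendSequenceTwoWays}, one then checks that the canonicalization concerns only the appended vectors, so that the resulting canonical sequence literally begins with $(f_1,\ldots,f_m)$.

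I expect the main obstacle to be the optimality statement $\mu(\bfh)=c^{*}$, i.e.\ that the canonical normal form genuinely maximizes the count of nef members rather than being merely one admissible reduction. The one-nef-class-per-block claim is intuitively clear from the $A_\bullet$-chain picture, but a careful proof must show that moving a degenerate member of a block into the nef cone necessarily expels that block's nef generator; I would argue this on the chain $f'_{i_s}+R^{i_s}_1+\cdots+R^{i_s}_j$, where nef-ness singles out a unique element. The accompanying bookkeeping — that appending vectors does not disturb the earlier blocks, so that the literal initial segment $(f_1,\ldots,f_m)$ survives — is then routine from the uniqueness in Proposition~\ref{lem:CanonicalSequences}.
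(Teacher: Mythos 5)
Your overall architecture is close to the paper's: the paper also reduces (1)(b) and (2)(a) to the argument for (1)(a), which it does not reprove but simply cites as \cite[Proposition~6.1.7]{DK25}, substituting Proposition~\ref{prop_extendSequenceTwoWays} for \cite[Corollary~6.1.2]{DK25} where needed. Your $\mu$-mechanism for the monotonicity $c'\geq c$ is sound, and the ``optimality'' point you flag as the main obstacle has a clean proof you almost state: each $\wnod_Y$-orbit of an effective primitive isotropic class contains a \emph{unique} nef representative (the nef cone is a fundamental domain for $\wnod_Y$), members of one block lie in a single $\wnod_Y$-orbit, and distinct members of an orbit remain distinct under any $w\in\wnod_Y$; hence at most one member per block can be nef after applying $w$, so $\mu(\bff)$ equals the number of blocks, i.e.\ the non-degeneracy of the canonical form. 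Your chain-based argument is this fact in disguise, and with it the chain of (in)equalities $c^{*}=\mu(\bfh)\geq\mu(\bff)=c$ is correct.

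The genuine gap is the final step, which you dismiss as ``routine from the uniqueness in Proposition~\ref{lem:CanonicalSequences}.'' Uniqueness says there is a unique $w_E\in\wnod_Y$ canonicalizing the \emph{extended} sequence $E=(f_1,\ldots,f_m,g_{m+1},\ldots)$; it does not say $w_E$ restricts to the identity on $(f_1,\ldots,f_m)$, and in general it does not. For instance, if $\bff=(f_1,\,f_1+R)$ is canonical, the canonicalization of an extension may send $f_1+R$ to $f_1+R'$ for a different $(-2)$-curve $R'$ in the same orbit, or may even move $f_1$ to the degenerate slot of its block while some appended vector lands on the base. The resulting canonical sequence always contains the same nef block bases --- which is why your $\mu$-count and the inequality $c'\geq c$ survive --- but it need not literally contain the degenerate members $f_i$, so it need not begin with $(f_1,\ldots,f_m)$ as the lemma asserts. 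Repairing this requires canonicalizing the appended vectors while preserving the initial canonical segment (e.g.\ using reflections chosen so as not to disturb $f_1,\ldots,f_m$), which is exactly the content of the proof of \cite[Proposition~6.1.7]{DK25} that the paper defers to. Note that in the non-degenerate case actually used in Proposition~\ref{prop:casesNdFndMnd} the issue evaporates, since there the $f_i$ are precisely the nef bases of any canonicalization; but the lemma is stated for arbitrary canonical sequences, and there your argument as written does not deliver the literal initial segment. (A last small omission: one should observe that canonicalization preserves membership in $\One$, which is immediate since $\wnod_Y\subseteq W(L_{10})$ and the orbits $\Oext,\One$ are $W(L_{10})$-stable.)
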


\begin{proof}
Suppose that $m\leq 8$. Then, (1)~(a) is the content of \cite[Proposition~6.1.7]{DK25}.
    The proof of (1)~(b) is analogous to the proof of \cite[Proposition~6.1.7]{DK25}, with the only difference that we use Proposition~\ref{prop_extendSequenceTwoWays} instead of \cite[Corollary~6.1.2]{DK25}. Suppose now $m=9$. Then, (2)~(a) is analogous to the one of \cite[Proposition~6.1.7]{DK25}, but the extension to an isotropic sequence of length $10$ is given by the hypothesis $\bff \in \Oext$. In the remaining cases (2)~(b) and (3) there is nothing to show.
\end{proof}

\subsection{Fano and Mukai polarizations}
\label{sec:Fano-Mukai-polarizations}

 Lemma~\ref{lem_maximal sequences} underlines the importance of two classes of isotropic sequences, namely canonical isotropic sequences of length $10$ and canonical isotropic sequences of length $9$ in $\One$. Indeed, any other canonical isotropic sequence can be extended to one of these types. These two classes of isotropic sequences carry information about the birational geometry of Enriques surfaces. We illustrate this principle, and start by recalling the definition of the $\Phi$ function.

 \begin{definition}
For an Enriques surface $Y$, let $\Phi\colon S_Y \to \bZ_{\geq 0}$ be the function defined as
\[
x\mapsto\min\{ | x \cdot f | \colon f \in S_Y \mbox{ is isotropic} \}.
\]
In the special case where $x=[D]$ for $D$ big divisor, then \cite[Lemma~2.4.10~(2)]{CDL25} implies that
\[
\Phi([D])=\min\{ | [D] \cdot f | \colon f \in S_Y \mbox{ is a half-fiber} \}.
\]
\end{definition}

\subsubsection{Fano polarizations}

\begin{definition}
    On an Enriques surface $Y$, a nef divisor $D$ satisfying $D^2=10$ and $\Phi(D)=3$ is called a \textit{Fano polarization}.
    The image $Y'\subseteq \pr 5$ of $Y$ through $\phi_{|D|}$ is called a \emph{Fano model}. 
    The numerical class of a Fano polarization is called a \textit{numerical Fano polarization}.
\end{definition}

The following lemma is well-known to experts:
it expresses the relation between Fano polarizations and isotropic sequences of length $10$. We include a proof for the reader's convenience.

\begin{lemma}
\label{lem_FanoPolCanonicalSeq}
For an Enriques surface $Y$, the following hold:
\begin{enumerate}

\item Let $h$ be a numerical Fano polarization on $Y$. Then, there exists a canonical isotropic sequence $(f_1,\ldots,f_{10})$ such that
\[
h=\frac{1}{3}(f_1 + \ldots + f_{10}).
\]

\item Conversely, let $\bff\coloneqq(f_1,\ldots,f_{10})$ be a canonical isotropic sequence. Then, $\frac{1}{3}(f_1 + \ldots + f_{10})$ is a numerical Fano polarization.

\end{enumerate}
\end{lemma}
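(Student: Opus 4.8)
The plan is to establish the two directions separately, treating (2) first since it feeds naturally into (1). For direction (2), I start with a canonical isotropic sequence $\bff = (f_1,\ldots,f_{10})$ and set $h \coloneqq \frac{1}{3}(f_1+\ldots+f_{10})$. I first check that $h$ genuinely lies in $S_Y$, i.e. that the sum is divisible by $3$; this is a lattice-theoretic fact that can be verified on a single representative sequence using Proposition~\ref{prop_transitivity} (transitivity of $W(L_{10})$ on length-$10$ sequences, combined with the fact that $W(L_{10})$ acts by isometries and hence preserves divisibility of the sum). Next I compute $h^2$: expanding $9h^2 = \left(\sum_i f_i\right)^2 = \sum_i f_i^2 + 2\sum_{i<j} f_i\cdot f_j = 0 + 2\binom{10}{2} = 90$, so $h^2 = 10$. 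For nefness, I would intersect $h$ against an arbitrary smooth rational curve $R$: since each $f_i$ is nef (in the canonical sequence the $f_i$ themselves are the nef classes, or differ from nef classes by effective chains of $(-2)$-curves as in Proposition~\ref{lem:CanonicalSequences}) and $h$ is a positive rational combination of nef classes, one gets $h\cdot R \geq 0$, and then $h\cdot C\geq 0$ for every curve $C$ by writing effective divisors in terms of the canonical generators. Finally, $\Phi(h) = 3$ follows because $h\cdot f_i = \frac{1}{3}\sum_j f_i\cdot f_j = \frac{1}{3}(0 + 9) = 3$ for each $i$, giving $\Phi(h)\leq 3$; the reverse inequality $\Phi(h)\geq 3$ is the delicate point, and I would invoke the general bound that $\Phi(D)^2 \leq D^2$ together with the classification of possible values, ruling out $\Phi(h)\in\{1,2\}$ via the structure of isotropic vectors against an $h$ with $h^2=10$.

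For direction (1), I begin with a numerical Fano polarization $h$, so $h^2=10$ and $\Phi(h)=3$. The goal is to produce the isotropic sequence realizing $h$ as $\frac{1}{3}\sum f_i$. By definition of $\Phi$, there is an isotropic $f_1$ with $h\cdot f_1 = 3$, which I may take to be a half-fiber (nef primitive isotropic). I then proceed inductively: having found $f_1,\ldots,f_k$ forming an isotropic sequence with $h\cdot f_i=3$, I want to extend. The key computation is that in the rank-$10$ lattice $U\oplus E_8$, the orthogonal complement arithmetic forces the sum $f_1+\ldots+f_{10}$ of a maximal isotropic sequence to equal $3h$ whenever $h\cdot f_i = 3$ for all $i$; concretely, writing $v \coloneqq 3h - \sum_i f_i$, one checks $v\cdot f_i = 9 - 3 = \ldots$ collapses and $v^2 = 0$, forcing $v=0$ by nondegeneracy once the sequence has full length $10$. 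I would then appeal to Proposition~\ref{prop_extendSequenceTwoWays} and Lemma~\ref{lem_maximal sequences} to extend the initial sequence to a canonical one of length $10$, and finally verify that the extended sequence still satisfies $h\cdot f_i=3$ for the new terms, which again uses $\Phi(h)=3$ to bound each $h\cdot f_i$ from below by $3$ and the $h^2=10$ constraint to bound the total $\sum h\cdot f_i = 3h^2 = 30$ from above, forcing equality $h\cdot f_i=3$ for every $i$.

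The main obstacle I anticipate is the sharp equality $\Phi(h)=3$ in direction (1) controlling all ten intersection numbers simultaneously, i.e. showing that a Fano polarization intersects \emph{every} element of its associated maximal isotropic sequence in exactly $3$ rather than allowing some to be larger. The clean way around this is the averaging identity: since $\sum_{i=1}^{10} h\cdot f_i = h\cdot(\sum_i f_i) = h\cdot 3h = 30$ while each individual term satisfies $h\cdot f_i \geq \Phi(h) = 3$, the ten nonnegative terms summing to $30$ with each at least $3$ are forced to all equal $3$. This reduces the whole argument to producing \emph{some} length-$10$ isotropic sequence whose sum is $3h$, which the canonicalization machinery of Proposition~\ref{lem:CanonicalSequences} and Lemma~\ref{lem_maximal sequences} supplies. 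I would present the averaging step explicitly as it is the linchpin, and keep the lattice divisibility check for $3 \mid \sum f_i$ as a short verification on a standard sequence.
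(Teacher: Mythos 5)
There are genuine gaps in both directions. For part (1), the paper does not prove the statement at all: it cites \cite[Lemma~3.5.2]{CDL25}. Your attempted replacement is circular at its linchpin. The averaging identity $\sum_{i=1}^{10} h\cdot f_i = h\cdot 3h = 30$ \emph{presupposes} a length-$10$ isotropic sequence with $\sum_i f_i = 3h$, which is exactly the existence statement to be proved, and nothing you invoke produces one: Proposition~\ref{prop_extendSequenceTwoWays} and Lemma~\ref{lem_maximal sequences} extend isotropic sequences abstractly, with no compatibility with $h$ (there is no reason the new terms satisfy $h\cdot f = 3$, and your inductive step is asserted rather than argued), while Proposition~\ref{lem:CanonicalSequences} replaces a sequence by its $\wnod_Y$-translate, which changes the sum from $3h$ to $3w(h)$. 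Your rank-$10$ computation (setting $v \coloneqq 3h - \sum_i f_i$ and checking $v\cdot f_i = 0$, $v^2 = 0$, hence $v = 0$) is correct, but it only proves that \emph{if} ten isotropic classes each meet $h$ in $3$ and form an isotropic sequence, then their sum is $3h$; producing those classes is the actual content, and it is not supplied.

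Part (2) is where the paper gives a real proof, and your sketch diverges from it at the two nontrivial points. First, nefness: your premise that ``$h$ is a positive rational combination of nef classes'' is false precisely in the interesting cases, since in a canonical sequence only the subsequence $f_{i_1},\ldots,f_{i_c}$ is nef and the remaining terms are of the form $f_{i_s} + R_1 + \cdots + R_k$ with $(-2)$-curves $R_j$. The paper instead uses the explicit chain structure of Proposition~\ref{lem:CanonicalSequences} to check $h\cdot R \geq 0$ for every curve $R$ appearing in $\bff$, and then handles arbitrary effective divisors by observing that common components contribute non-negatively; your hedged parenthetical points in this direction but the computation is never done. Second, the lower bound $\Phi(h)\geq 3$: your plan cannot work as stated, since the inequality $\Phi(D)^2 \leq D^2$ bounds $\Phi$ from \emph{above} ($\Phi(h)\leq 3$ when $h^2=10$), and ``ruling out $\Phi(h)\in\{1,2\}$ via the structure of isotropic vectors'' is a placeholder, not an argument. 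The paper's actual mechanism is the dichotomy on half-fibers: since $h$ is big and nef, $\Phi(h)$ is computed on half-fiber classes (\cite[Lemma~2.4.10~(2)]{CDL25}), and any half-fiber $v$ distinct from all the $f_i$ satisfies $f_i\cdot v \geq 1$ for every $i$, whence $h\cdot v \geq 10/3 > 3$, while a half-fiber equal to some $f_i$ gives exactly $3$. Your correct pieces --- the computation $h^2 = 10$, the integrality of $\frac{1}{3}\sum_i f_i$ via $W(L_{10})$-transitivity, and $\Phi(h)\leq 3$ from $h\cdot f_i = 3$ --- are the routine parts; the two steps above must be supplied for the proof to stand.
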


\begin{proof}
Part~(1) is \cite[Lemma~3.5.2]{CDL25}. Conversely, given $\bff$, we define $h=\frac{1}{3}(f_1 + \ldots + f_{10})$. We have that $\Phi(h)\leq3$ as $h\cdot f_i=3$ for all $i=1,\ldots,10$. Now we show that $h$ is nef. It follows from the form of a canonical isotropic sequence that $h\cdot R\geq 0$ for all curves $R$ appearing in $\bff$. Thus, for an effective divisor $D$, we have $h\cdot[D]\geq 0$ because possible common components between $h$ and $[D]$ contribute non-negatively to their intersection. Hence, $h$ is nef, and also big because $h^2=10$. To prove that $\Phi(h)=3$, it suffices to show that $h\cdot v>3$ where $v$ is any class of a half-fiber different from $f_1,\ldots,f_{10}$. Because of this assumption, we have that $f_i\cdot v>0$, and hence $h\cdot v = \frac{1}{3} \sum_{i=1}^{10}f_i\cdot v > 3$.
\end{proof}

The decomposition in Lemma~\ref{lem_FanoPolCanonicalSeq}~(1) is unique. 

\begin{lemma}[{\cite[Lemma~2.7]{MMV_Reye}}]\label{lem_UniqueCanonicalFano}
Let $h=[D]$ be a numerical Fano polarization with associated canonical isotropic sequence $(f_1,\ldots,f_{10})$ as in Lemma~\ref{lem_FanoPolCanonicalSeq}~(1). Then,
\begin{enumerate}

\item If $E$ is an effective isotropic divisor, then $[E]$ coincides with one of the $f_i$ if and only if $D\cdot E=3$. In particular, the sequence $(f_1,\ldots,f_{10})$ is unique up to reordering;

\item If $R$ is a $(-2)$-curve, then $h \cdot R = 0$ if and only if $R$ appears in the sequence $(f_1,\ldots,f_{10})$.

\end{enumerate}
\end{lemma}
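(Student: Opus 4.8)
The plan is to treat the two parts separately, reducing part~(1) to an elementary computation in the lattice $S_Y$ and deriving part~(2) from the explicit shape of a canonical sequence together with the nefness of $h$. Throughout I use only the relations $f_i\cdot f_j=1-\delta_{ij}$ and $3h=f_1+\dots+f_{10}$ from Lemma~\ref{lem_FanoPolCanonicalSeq}.

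For part~(1), the ``only if'' direction is immediate: if $[E]=f_i$, then $D\cdot E=h\cdot f_i=\frac13\sum_j f_j\cdot f_i=\frac93=3$. For the converse I would work in the $\mathbb{Q}$-basis $f_1,\dots,f_{10}$ of $S_Y\otimes\mathbb{Q}$, which is indeed a basis since the Gram matrix $(1-\delta_{ij})$ has determinant $-9\neq0$. Writing $[E]=\sum_i c_if_i$ and $\sigma=\sum_i c_i$, one computes $f_j\cdot[E]=\sigma-c_j$ and $h\cdot[E]=3\sigma$. The hypothesis $D\cdot E=3$ forces $\sigma=1$, and then integrality of the products $f_j\cdot[E]\in\mathbb{Z}$ gives $c_j=1-f_j\cdot[E]\in\mathbb{Z}$ for every $j$. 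Finally $[E]^2=\sigma^2-\sum_i c_i^2=0$ yields $\sum_i c_i^2=1$, so exactly one $c_i$ equals $1$ and the rest vanish, i.e. $[E]=f_i$. Since this identifies $\{f_1,\dots,f_{10}\}$ with the set of isotropic $f\in S_Y$ satisfying $h\cdot f=3$, a set depending only on $h$, the sequence is determined up to reordering.

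For part~(2), I first record that $h\cdot f_k=\frac13\sum_j f_j\cdot f_k=3$ for every index $k$, whether or not $f_k$ is nef. Using the canonical shape $f_k=f_{i_s}+R^{i_s}_1+\dots+R^{i_s}_{k-i_s}$ of Proposition~\ref{lem:CanonicalSequences} together with $h\cdot f_{i_s}=3$, subtraction gives $\sum_t h\cdot R^{i_s}_t=0$; as $h$ is nef, each summand is $\geq0$, hence $h\cdot R=0$ for every $R$ appearing in the sequence. For the reverse implication, suppose $R$ is a $(-2)$-curve with $h\cdot R=0$ that does not appear. Writing $3h=\sum_s\big(\ell_s f_{i_s}+\sum_t(\ell_s-t)R^{i_s}_t\big)$, with $\ell_s$ the block lengths, every term of $3h\cdot R$ is non-negative: $f_{i_s}\cdot R\geq0$ since $f_{i_s}$ is nef, and $R^{i_s}_t\cdot R\geq0$ since $R$ is a distinct irreducible curve. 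As the total vanishes, $R$ is orthogonal to every $f_{i_s}$ and every $R^{i_s}_t$. But these $c+(10-c)=10$ classes are obtained from $f_1,\dots,f_{10}$ by an invertible integral change of basis, hence form a $\mathbb{Q}$-basis of $S_Y\otimes\mathbb{Q}$; orthogonality to all of them contradicts $R^2=-2\neq0$. Thus every $(-2)$-curve orthogonal to $h$ appears.

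The main obstacle is exactly this last implication, namely ruling out ``spurious'' $(-2)$-curves orthogonal to $h$ that do not lie on the chains. I expect the crux to be recognizing that the nef members $f_{i_s}$ together with the appearing chain curves constitute a full-rank system, so that no nonzero class can be orthogonal to all of them; the only geometric inputs required are the nefness of the $f_{i_s}$ and the non-negativity of intersection between distinct irreducible curves, both of which are available.
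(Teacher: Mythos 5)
Your proof is correct. Note that the paper offers no argument for this lemma at all: it is imported wholesale from \cite[Lemma~2.7]{MMV_Reye}, so the only in-paper material to compare with is the proof of the Mukai analogue, Lemma~\ref{lem_UniqueCanonicalMukai}, whose part~(1) the authors also describe as ``analogous to \cite[Lemma~2.7~(2)]{MMV_Reye}.'' Your part~(1) is exactly that computation: the Gram matrix $(1-\delta_{ij})$ has determinant $-9$, the coordinates $c_j=\sigma-f_j\cdot[E]$ are forced to be integers, and $\sigma=1$, $\sum_i c_i^2=1$ pin down $[E]=f_i$ (in fact you prove slightly more than stated, since effectivity of $E$ is never used). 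Your part~(2) follows the same template as the paper's proof of Lemma~\ref{lem_UniqueCanonicalMukai}~(2) --- decompose the polarization with positive coefficients, use nefness of the $f_{i_s}$ and non-negativity of intersections of distinct irreducible curves to force every term of $3h\cdot R$ to vanish --- and your closing full-rank step is precisely the observation the paper records in the remark following Lemma~\ref{lem_UniqueCanonicalMukai}: the ten vectors of a canonical length-$10$ sequence form a basis of $S_Y\otimes\mathbb{Q}$, so no class with $R^2=-2$ can be orthogonal to all of them. This is exactly why the Fano statement, unlike the Mukai one, carries no extra case ``$f_i\cdot R=0$ for all $i$,'' and your write-up makes that mechanism explicit where the paper leaves it implicit behind the citation.
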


\subsubsection{Mukai polarizations}
As discussed above, Fano polarizations are closely related to canonical isotropic sequences of maximal length $10$. 
On the other hand, canonical isotropic sequences in $\One$ are related to another type of polarizations called Mukai polarizations \cite[Section~3.5]{CDL25}, which we now recall.

\begin{definition} \label{defn:MukaiPol}
On an Enriques surface $Y$, a \emph{Mukai polarization} $D$ is a nef divisor $D$ such that $D^2=18$ and $\Phi(D)=4$. The image $Y'\subseteq \pr 9$ of $Y$ through $\phi_{|D|}$ is called a \emph{Mukai model}. 
A \emph{numerical Mukai polarization} is the numerical equivalence class of a Mukai polarization.
\end{definition}

\begin{lemma}
\label{lem_MukaiPolCanonicalSeq}
For an Enriques surface $Y$, the following hold:
\begin{enumerate}

\item Let $v$ be a numerical Mukai polarization on $Y$. Then, there exists a canonical isotropic sequence $(g_1,\ldots,g_{9})\in \One$ such that
\[
v=\frac{1}{2}(g_1 + \ldots + g_{9}).
\]

\item Conversely, let $(g_1,\ldots,g_9)\in \One$ be a canonical isotropic sequence. Then, $\frac{1}{2}(g_1 + \ldots + g_9)$ is a numerical Mukai polarization.
\end{enumerate}
\end{lemma}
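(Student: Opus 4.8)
The plan is to handle the two implications separately, in parallel with Lemma~\ref{lem_FanoPolCanonicalSeq}. The common arithmetic input is that, for any isotropic sequence $(g_1,\ldots,g_9)$ with $g_i\cdot g_j=1-\delta_{ij}$, the sum $s\coloneqq g_1+\cdots+g_9$ satisfies $s^2=\sum_{i\ne j}g_i\cdot g_j=72$ and $s\cdot g_i=8$ for each $i$; thus whenever $v\coloneqq\tfrac12 s$ is an integral class one has $v^2=18$ and $v\cdot g_i=4$ for all $i$.

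For part~(2) I would mimic the proof of Lemma~\ref{lem_FanoPolCanonicalSeq}~(2). Integrality of $v$ is exactly the content of $(g_1,\ldots,g_9)\in\One$: by Proposition~\ref{prop_OrbitsLength9} the class $s$ is divisible by $2$ in $S_Y$, so $v=\tfrac12 s\in S_Y$ and $v^2=18$. Since $v\cdot g_i=4$ and each $g_i$ is isotropic, the definition of $\Phi$ gives $\Phi(v)\le 4$ immediately. For nefness I would argue as in the Fano case: every $g_i$ is effective (a nef term is a half-fiber, and a non-nef term is a half-fiber plus a chain of $(-2)$-curves by Proposition~\ref{lem:CanonicalSequences}), hence $v$ is effective, and a direct inspection of the canonical form yields $v\cdot R\ge 0$ for the $(-2)$-curves $R$ occurring in the decomposition of the $g_i$; as the only irreducible curves meeting an effective class negatively are its own components, this shows $v$ is nef, and big because $v^2>0$.

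The delicate step is the lower bound $\Phi(v)\ge 4$. As $v$ is big, I may compute $\Phi(v)$ as the minimum of $v\cdot w$ over half-fibers $w$. Fix such a $w$. Each $g_i$ and $w$ are effective isotropic classes, hence lie in the closure of the positive cone, so $g_i\cdot w\ge 0$; and since $S_Y$ has signature $(1,9)$, the reverse Cauchy--Schwarz inequality forces two distinct primitive isotropic classes of this cone to meet strictly positively, so (the $g_i$, as members of an isotropic sequence, and $w$ being primitive) $g_i\cdot w=0$ only when $g_i=w$. Therefore, if $w\notin\{g_1,\ldots,g_9\}$ then $v\cdot w=\tfrac12\sum_i g_i\cdot w\ge\tfrac92$, i.e.\ $v\cdot w\ge 5$; while if $w=g_i$ for some (necessarily nef) index then $v\cdot w=4$. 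In all cases $v\cdot w\ge 4$, so $\Phi(v)\ge 4$, and the value $4$ is attained because a canonical sequence begins with a nef class (Proposition~\ref{lem:CanonicalSequences} gives $i_1=1$, so $g_1$ is a half-fiber with $v\cdot g_1=4$). Hence $\Phi(v)=4$ and $v$ is a numerical Mukai polarization.

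For part~(1) I expect the real difficulty to lie, and I would obtain the decomposition of a given numerical Mukai polarization $v$ from the companion results of \cite[Section~3.5]{CDL25} (the $\One$-analogue of \cite[Lemma~3.5.2]{CDL25}, which supplies Lemma~\ref{lem_FanoPolCanonicalSeq}~(1)). A self-contained argument would start from a half-fiber $g_1$ with $v\cdot g_1=\Phi(v)=4$ and build an isotropic $9$-sequence by successively subtracting half-fibers while tracking the self-intersection and $\Phi$ of the partial remainders, and would finally verify via Proposition~\ref{prop_OrbitsLength9} that the sequence lands in $\One$ (equivalently that $\sum_i g_i=2v$ is divisible by $2$, which is automatic). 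The hard part there is guaranteeing that each extracted class can be chosen nef and that the completed length-$9$ sequence is non-extensible rather than extensible; this is precisely the lattice-theoretic input classifying the two orbits of length-$9$ sequences, so I would cite \cite{CDL25} for it instead of reproving it.
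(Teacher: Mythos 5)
Your proposal is correct and, in structure, matches the paper's own proof: part~(1) is delegated to \cite[Lemma~3.5.3]{CDL25} (the paper cites Equation~(3.5.5) there, exactly the ``$\One$-analogue of Lemma~3.5.2'' you identify), and part~(2) proceeds by extracting integrality of $v=\frac12(g_1+\cdots+g_9)$ from Proposition~\ref{prop_OrbitsLength9}, computing $v^2=18$, and checking nefness from the shape of a canonical sequence --- your ``direct inspection'' is precisely the paper's computation in \eqref{eq_vR0}, run there as a contradiction: if $v\cdot R<0$ then some $g_i\cdot R<0$, which forces $g_i=g_{i-1}+R$ with $g_{i-1}\cdot R=1$ and $g_j\cdot R=0$ otherwise, whence $2v\cdot R=0$. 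The one genuine divergence is the lower bound $\Phi(v)\geq 4$: the paper simply cites the proof of \cite[Lemma~3.5.3]{CDL25} for $\Phi(v)=4$, whereas you prove it directly, using that two distinct primitive isotropic classes in the closure of the positive cone of a hyperbolic lattice meet strictly positively, so any half-fiber $w\notin\{g_1,\ldots,g_9\}$ satisfies $v\cdot w\geq \frac92$, hence $v\cdot w\geq 5$ by integrality, while $v\cdot g_1=4$ with $g_1$ nef realizes the minimum. This argument is sound (the $g_i$ are primitive because $g_i\cdot g_j=1$ for $i\neq j$, and effective by the canonical form, so they do lie in the closure of the positive cone), it mirrors the paper's own proof of $\Phi(h)=3$ in Lemma~\ref{lem_FanoPolCanonicalSeq}~(2), and it buys a self-contained statement plus the sharper byproduct that every half-fiber outside the sequence meets $v$ in degree at least $5$ --- at the modest cost of re-deriving what \cite{CDL25} already provides.
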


\begin{proof}
Part~(1) follows from the proof of \cite[Lemma~3.5.3]{CDL25} (more precisely, see \cite[Equation~(3.5.5)]{CDL25}). For Part~(2), consider a canonical isotropic sequence $\bfg\coloneqq (g_1,\ldots,g_9)$. Then, by Proposition~\ref{prop_OrbitsLength9}, $\bfg\in \One$ if and only if $v\coloneqq \frac{1}{2}(g_1 + \ldots + g_9)$ belongs to $S_Y$. In this case, we have that $v^2=18$. Moreover by the proof of \cite[Lemma~3.5.3]{CDL25}, we have that $\Phi(v)=4$. So, to show that $v$ is a numerical Mukai polarization, it suffices to see that $v$ is nef.

Suppose for the sake of contradiction that there exists a $(-2)$-curve $R$ such that $v\cdot R < 0$. Then, there exists $i\in\{1,\ldots,9\}$ such that $g_i \cdot R <0$. Since $\bfg$ is canonical, this is only possible if $g_{i}=g_{i-1} + R$ for some $i\in\{2,\ldots,9\}$ with $g_{i-1}\cdot R=1$, $g_{i}\cdot R=-1$, and $g_j\cdot R=0$ for all $j\neq i-1,i$. Then, we must have that
\begin{equation}
    \label{eq_vR0}
2v\cdot R=g_{i-1}\cdot R+g_{i}\cdot R+\sum_{j\neq i-1,i} g_j\cdot R=0,
\end{equation}
which is a contradiction as it implies that $v\cdot R=0$.
\end{proof}

\begin{lemma}\label{lem_UniqueCanonicalMukai}
Let $v=[D]$ be a numerical Mukai polarization with associated canonical isotropic sequence $(g_1,\ldots,g_9)$ as in Lemma~\ref{lem_MukaiPolCanonicalSeq}~(1). Then,
\begin{enumerate}
    \item If $E$ is an effective isotropic divisor, then $[E]$ coincides with one of the $g_i$ if and only if $D\cdot E=4$. In particular, the sequence $(g_1,\ldots,g_9)$ is unique up to reordering;
    \item If $R$ is a $(-2)$-curve, then $v \cdot R = 0$ if and only if $R$ appears in $(g_1,\ldots,g_9)$, or $g_i\cdot R = 0$ for all $i=1,\ldots,9$.
\end{enumerate}
\end{lemma}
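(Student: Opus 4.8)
The plan is to handle the two parts in turn, deriving the intersection numbers $v\cdot g_i$ first. Writing $v=\tfrac12(g_1+\cdots+g_9)$ and using $g_i\cdot g_j=1-\delta_{ij}$, one finds $v\cdot g_i=\tfrac12\sum_j g_j\cdot g_i=\tfrac12\cdot 8=4$ for each $i$; this is the ``only if'' direction of part~(1). The whole statement parallels Lemma~\ref{lem_UniqueCanonicalFano}, but the argument must account for the fact that a length-$9$ sequence spans only a rank-$9$ sublattice of $S_Y$, so I would replace the Fano input by a direct Hodge-index computation.

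For the converse in part~(1), let $E$ be an effective isotropic divisor with $D\cdot E=4$ and put $a_i\coloneqq E\cdot g_i$, so that $\sum_i a_i=2(v\cdot E)=8$. Set $W\coloneqq\langle g_1,\dots,g_9\rangle\subseteq S_Y$. Its Gram matrix has zero diagonal and all off-diagonal entries equal to $1$, hence eigenvalues $8$ and $-1$ (with multiplicity $8$); thus $W$ is nondegenerate of signature $(1,8)$ and $W^\perp$ is negative definite of rank~$1$. Decomposing $E=E_W+E'$ orthogonally over $\mathbb{Q}$ with $E_W\in W\otimes\mathbb{Q}$ and $E'\in W^\perp\otimes\mathbb{Q}$, negative-definiteness gives $E_W^2\geq E^2=0$. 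Solving $E_W\cdot g_i=a_i$ inside $W\otimes\mathbb{Q}$ yields $E_W=\sum_i(1-a_i)g_i$, and since $\sum_i(1-a_i)=1$,
\[
E_W^2=\Big(\sum_i(1-a_i)\Big)^2-\sum_i(1-a_i)^2=1-\sum_i(1-a_i)^2\geq0,
\]
so that $\sum_i(1-a_i)^2\leq1$.

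Now the numerical rigidity concludes part~(1): the $a_i$ are integers, so $\sum_i(1-a_i)^2\leq1$ forces $1-a_i=0$ for all but at most one index, and the normalization $\sum_i a_i=8$ leaves only the possibility $a_j=0$ for a single $j$ and $a_i=1$ for $i\neq j$. Hence $E\cdot g_i=g_j\cdot g_i$ for every $i$, so $E-g_j\in W^\perp$; being isotropic (as $(E-g_j)^2=0$) in a negative definite lattice, it vanishes, i.e.\ $[E]=g_j$. Consequently $\{g_1,\dots,g_9\}$ equals the set of classes of effective isotropic divisors $E$ with $D\cdot E=4$, which is intrinsic to $D$, giving the asserted uniqueness up to reordering.

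For part~(2), I would extract the intersection pattern of a $(-2)$-curve $R$ with the canonical sequence from the analysis in the proof of Lemma~\ref{lem_MukaiPolCanonicalSeq}: if $g_i\cdot R<0$ for some $i$, canonicity forces $g_i=g_{i-1}+R$ with $g_{i-1}\cdot R=1$, $g_i\cdot R=-1$ and $g_j\cdot R=0$ otherwise, which is exactly the statement that $R$ appears in $(g_1,\dots,g_9)$, and then $2(v\cdot R)=\sum_j g_j\cdot R=0$. Otherwise $g_i\cdot R\geq0$ for all $i$, and $v\cdot R=0$ holds if and only if every $g_i\cdot R=0$; together these give the claimed equivalence. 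The main obstacle is the rigidity step: the point is to project onto the rank-$9$ lattice $W$ and apply the Hodge index theorem there, the clean bound $\sum_i(1-a_i)^2\le1$ being what pins $(a_i)$ to the intersection vector of a single $g_j$. In part~(2) the only delicate input, namely that an appearing curve meets only its two neighbouring classes in the sequence, is borrowed from the canonical-form analysis rather than reproved.
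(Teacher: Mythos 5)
Your proof is correct, and it fills in precisely the details that the paper leaves implicit. For part (2) your argument coincides with the paper's almost verbatim: an appearing curve gives $v\cdot R=0$ by the telescoping computation \eqref{eq_vR0}; a curve orthogonal to all $g_i$ trivially gives $v\cdot R=0$; and conversely, if $v\cdot R=0$ but not all $g_i\cdot R$ vanish, some $g_i\cdot R<0$, which by canonicity forces $R$ to appear. (The structural input $g_{i-1}\cdot R=1$, $g_i\cdot R=-1$, $g_j\cdot R=0$ otherwise is stated in exactly this form in the proof of Lemma~\ref{lem_MukaiPolCanonicalSeq}, so borrowing it rather than reproving it is legitimate.) For part (1) the paper gives no argument at all: it declares the claim ``analogous to \cite[Lemma~2.7~(2)]{MMV_Reye} after appropriately adjusting the intersection numbers.'' Your projection argument is exactly the required adjustment, worked out in full. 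In the Fano case the ten vectors $f_i$ span $S_Y\otimes\bQ$, so one writes $E=\sum_i(1-a_i)f_i$ directly and $E^2=0$ yields the equality $\sum_i(1-a_i)^2=1$; for a length-$9$ sequence you must instead project onto $W=\langle g_1,\ldots,g_9\rangle$, use that $W$ has signature $(1,8)$ --- whence $W^\perp$ is negative definite of rank $1$, a fact the paper records separately as Lemma~\ref{lem_uniqueOrt} --- to obtain the inequality $E_W^2\geq 0$, and then add the extra closing step that $E-g_j$ is an isotropic vector of the definite lattice $W^\perp$, hence zero. Your integer rigidity correctly combines $\sum_i(1-a_i)^2\leq 1$ with the normalization $\sum_i a_i=8$ to exclude the spurious solution with one $a_j=2$ (which would force $\sum_i a_i=10$), and your uniqueness conclusion, characterizing $\{g_1,\ldots,g_9\}$ as the effective isotropic classes of degree $4$ against $D$, matches the mechanism of Lemma~\ref{lem_UniqueCanonicalFano}~(1). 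What your route buys is self-containedness: the reader need not consult \cite{MMV_Reye} nor guess what ``appropriately adjusting'' means.
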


\begin{proof}
Part (1) is analogous to the proof of \cite[Lemma~2.7~(2)]{MMV_Reye}, after appropriately adjusting the intersection numbers.

For part (2), if $R$ appears in $(g_1,\ldots, g_9)$ then $v\cdot R = 0$ as in \eqref{eq_vR0}. If $R$ satisfies $g_i \cdot R = 0$ for $i=1,\ldots,9$ then $v \cdot R =0$.
Conversely, if $v\cdot R = 0$ then either $g_i \cdot R = 0$ for all $i$, or there exists $i$ such that $g_i\cdot R < 0$, which is only possible if $R$ appears in $(g_1,\ldots,g_9)$.
\end{proof}

\begin{remark}
The vectors of a canonical isotropic sequence of length $10$ form a basis for $S_Y\otimes \bQ$, so no $(-2)$-curve is orthogonal to all of them. This no longer holds for an isotropic sequence of lenght $9$, which explains the additional condition in Lemma~\ref{lem_UniqueCanonicalMukai} (2). \end{remark}

\begin{lemma}\label{lem_uniqueOrt}
    Let $v=\frac{1}{2}(g_1 + \ldots +g_9)$ be a numerical Mukai polarization on an Enriques surface $Y$. 
    The orthogonal complement $H:=\langle g_1,\ldots,g_9\rangle^\perp\subseteq S_Y$ is a lattice isometric to $A_1$. At most one $(-2)$-vector in $H$ is effective, in which case it is a $(-2)$-curve.
\end{lemma}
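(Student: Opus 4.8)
The plan is to first pin down the isometry type of $H$ by a discriminant computation, and then analyze effective $(-2)$-classes of $H$ using the canonical form of $\bfg=(g_1,\ldots,g_9)$ and Lemma~\ref{lem_UniqueCanonicalMukai}~(2).

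\emph{Identifying $H$ with $A_1$.} Set $M\coloneqq\langle g_1,\ldots,g_9\rangle\subseteq S_Y$. The Gram matrix of $(g_1,\ldots,g_9)$ is $J-I$ (all-ones minus identity, $9\times 9$), whose eigenvalues are $8$ once and $-1$ eight times; thus $M$ has rank $9$, signature $(1,8)$, and $|\det M|=8$. Since $\bfg\in\One$, Proposition~\ref{prop_OrbitsLength9} gives $v=\tfrac12\sum_i g_i\in S_Y$, and $v\notin M$ (an integral expression for $v$ would force half-integral coefficients of the independent $g_i$). Hence the primitive closure $\bar M$ of $M$ contains $v$ with $2v\in M$; as $|\det M|/|\det\bar M|=[\bar M:M]^2$ is a perfect square dividing $8$ and exceeding $1$, the index is exactly $2$ and $|\det\bar M|=2$. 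Because $S_Y\cong U\oplus E_8$ is unimodular and $\bar M$ is primitive, the orthogonal complement $H=M^\perp=\bar M^\perp$ (unchanged by saturation) has rank $10-9=1$ and $|\det H|=|\det\bar M|=2$ by the standard theory of discriminant forms. Finally, $v\in\bar M$ satisfies $v^2=18>0$ while $S_Y$ has signature $(1,9)$, so no nonzero $w\in H$ can have $w^2\geq 0$ (else $\langle v,w\rangle$ would be a positive-semidefinite plane); thus $H$ is negative definite of rank $1$ and determinant $2$, i.e.\ $H\cong\langle -2\rangle=A_1$.

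\emph{At most one effective $(-2)$-vector.} The only $(-2)$-vectors of $A_1$ are $\pm r_0$ for a generator $r_0$ of $H$. If both were effective, pairing with an ample class $A$ would give $r_0\cdot A>0$ and $-r_0\cdot A>0$, a contradiction. Hence at most one of $\pm r_0$ is effective; call it $r$ when it exists.

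\emph{The effective $(-2)$-vector is a $(-2)$-curve.} Write $r=\sum_k a_k R_k$ with $a_k>0$ and $R_k$ distinct irreducible curves. Since $r\in H$ we have $r\cdot v=0$, and nefness of $v$ forces $v\cdot R_k=0$ for each $k$; as $v^2>0$, the Hodge index theorem gives $R_k^2<0$, so every $R_k$ is a $(-2)$-curve. By Lemma~\ref{lem_UniqueCanonicalMukai}~(2), each $R_k$ either appears in $\bfg$ or is orthogonal to all the $g_i$, i.e.\ lies in $H$. The main obstacle is to rule out components that appear in $\bfg$, and the key observation that resolves it is that every appearing curve is a difference $g_{i_s+j}-g_{i_s+j-1}$ of consecutive terms of the sequence (Proposition~\ref{lem:CanonicalSequences}), hence lies in $M$ and is therefore orthogonal to $r\in M^\perp=H$. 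Consequently $r\cdot R_k=0$ for every appearing component, whereas any component lying in $H$ equals $r$ as a class (the unique effective $(-2)$-vector of $H$), giving $r\cdot R_k=r^2=-2$. Thus
\[
-2=r^2=\sum_k a_k\,(r\cdot R_k)=-2\!\!\sum_{R_k\in H}\!\! a_k,
\]
so exactly one component lies in $H$, equals $r$, and has coefficient $1$; subtracting it yields $\sum_{R_k\ \mathrm{appearing}}a_k R_k=0$, forcing all appearing coefficients to vanish. Hence $r$ is irreducible, i.e.\ a $(-2)$-curve, which completes the proof.
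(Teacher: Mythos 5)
Your proposal is correct, and the interesting divergence from the paper is in the identification $H\cong A_1$. The paper gets this by transport: using the transitivity of the Weyl group action on the orbit $\One$ (via Propositions~\ref{prop_transitivity} and \ref{prop_OrbitsLength9}), it moves $\bfg$ to the standard non-extensible sequence $\sfg_i=\Updelta-\sff_i-\sff_{10}$ of \cite{CDL25} and pulls back the explicit $(-2)$-vector $\Updelta-2\sff_{10}$, obtaining a primitive generator $\rho$ of $H$. You instead run an intrinsic lattice computation: $|\det M|=8$ from the Gram matrix $J-I$, saturation index exactly $2$ forced by $v\in S_Y\setminus M$, hence $|\det \overline{M}|=2$, and then $|\det H|=2$ with $H$ negative definite of rank $1$ by unimodularity of $S_Y$ and the signature argument. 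Both are complete; the paper's method has the advantage of producing the generator of $H$ explicitly (which is convenient in later computations such as Example~\ref{ex_Fnd=Mnd=9}), while yours is self-contained and avoids invoking the standard model and the orbit transitivity beyond what is needed for $v\in S_Y$. For the second half your argument runs essentially parallel to the paper's: where the paper cites \cite[Proposition~2.1.6~(2)]{CDL25} to decompose $\rho$ into $(-2)$-curves, you rederive this from nefness of $v$, the Hodge index theorem, and adjunction; and the paper's dichotomy (``some $R_k\in H$, hence $R_k=\rho$'' versus ``all $R_k$ appear, contradicting $\rho^2=-2$'') corresponds to your coefficient count. Your write-up makes explicit the one point the paper leaves terse, namely that curves appearing in the canonical sequence are differences of consecutive terms by Proposition~\ref{lem:CanonicalSequences}, hence lie in $M$ and are orthogonal to $r\in M^\perp$; and your count yields slightly more than needed, showing the effective decomposition of $r$ is trivial (so $r$ is itself irreducible), whereas the paper only concludes that $\rho$ equals the class of a $(-2)$-curve.
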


\begin{proof}
We first prove that $H$ is isometric to $A_1$. We have that $H$ has rank $1$ because $g_1,\ldots,g_9$ are independent. Next, we exhibit an explicit $(-2)$-vector in $H$.
By Proposition~\ref{prop_OrbitsLength9} there exists $w\in W(L_{10})$ such that $w(g_i)=\sfg_i$, where $(\sfg_1,\ldots,\sfg_9)$ is the non-extensible isotropic sequence in the proof of \cite[Lemma~3.5.3]{CDL25}. More precisely, we can write $\sfg_i = \Updelta - \sff_i - \sff_{10}$ for $i\in\{1,\ldots,9\}$ (see \cite[Proposition~1.5.3]{CDL25}). Now, we have that $\Updelta - 2\sff_{10}$ is a $(-2)$-vector orthogonal to $\sfg_1,\ldots,\sfg_9$. Hence,
\[
\rho:=w^{-1}(\Updelta - 2\sff_{10})
\]
is a $(-2)$-vector in $H$. We have that $\rho$ is necessarily primitive, and hence it is a generator for $H$, showing the claimed isometry with $A_1$. Additionally, since $H$ has rank $1$, we have that $\pm\rho$ are the only $(-2)$-vectors in $H$. As $\pm\rho$ are opposite, at most one of them is effective.

Finally, we show that if one among $\pm\rho$ is effective, then it has to be the class of a $(-2)$-curve. Without loss of generality, say that $\rho$ is effective. Because $v\cdot \rho = 0 $, by \cite[Proposition~2.1.6~(2)]{CDL25} we may write $\rho$ as a positive linear combination $\sum_k a_k R_k$ of $(-2)$-curves. We have $v \cdot R_k = 0$ for all $k$, because $a_k>0$ and $v$ is nef with $v\cdot \rho =0$. Then, either $R_k \in H$ for some $k$, in which case we must have $R_k = \rho$ (the unique effective $(-2)$-vector in $H$), or all the $R_k$ appear in $(g_1,\ldots, g_9)$. In this case, $\rho \in H$ implies that $\rho \cdot R_k = 0$ for all $k$, whence
\[
-2=\rho^2=\rho\cdot\sum_ka_kR_k=\sum_ka_k(\rho\cdot R_k)=0,
\]
which is a contradiction.
\end{proof}

\begin{remark}
With reference to Lemma~\ref{lem_uniqueOrt}, the general Enriques surface does not have smooth rational curves, therefore the two $(-2)$-vectors in $H$ cannot be effective. The other possibility can also happen for special Enriques surfaces: in Example~\ref{exa_nefMukai} we provide a numerical Mukai polarization on an Enriques surface with a $(-2)$-curve in $H$.
\end{remark}

\subsection{A refinement of the non-degeneracy invariant}

Motivated by the relation between canonical isotropic sequences of length $10$ and canonical sequences in $\One$ with the birational geometry of Fano and Mukai models, we give the following definitions. Let $Y$ be an Enriques surface. 

\begin{definition}\label{def_Fnd}
The \emph{Fano non-degeneracy invariant} of $Y$, $\Fnd(Y)$, is the maximum integer $c$ for which there exists a canonical isotropic sequence of length $10$ with non-degeneracy $c$.
\end{definition}

\begin{definition}\label{def_Mnd}
The \emph{Mukai non-degeneracy invariant} of $Y$, $\Mnd(Y)$, is the maximum integer $c$ for which there exists a canonical isotropic sequence in $\One$ with non-degeneracy $c$.
\end{definition}

\begin{lemma}\label{lem_NdGeqMax}
    We have that $\nd(Y)\geq \Fnd(Y)$ and $\nd(Y)\geq \Mnd(Y)$.
\end{lemma}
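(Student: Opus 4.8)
The plan is to observe that the nef part of any canonical isotropic sequence is itself a non-degenerate isotropic sequence in the sense of Definition~\ref{def:nd-invariant}, so that the non-degeneracy of the canonical sequence never exceeds $\nd(Y)$. Both inequalities then follow by taking maxima over the relevant classes of canonical sequences.

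First I would fix an arbitrary canonical isotropic sequence $\bff = (f_1,\ldots,f_m)$ of non-degeneracy $c$. By Proposition~\ref{lem:CanonicalSequences} together with Definition~\ref{def_ndForSequence}, the sequence $\bff$ contains a distinguished subsequence $(f_{i_1},\ldots,f_{i_c})$, with $1 = i_1 < \ldots < i_c$, consisting of exactly its $c$ nef classes. The key (and essentially only) point is that any subsequence of an isotropic sequence is again an isotropic sequence: the defining relations $f_i\cdot f_j = 1-\delta_{ij}$ are inherited verbatim by any subset of the indices. Since the classes $f_{i_1},\ldots,f_{i_c}$ are nef by construction, the subsequence $(f_{i_1},\ldots,f_{i_c})$ is a non-degenerate isotropic sequence of length $c$, and hence Definition~\ref{def:nd-invariant} yields $\nd(Y)\geq c$.

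To conclude, I would apply this observation to the two distinguished classes of canonical sequences. Choosing $\bff$ to be a canonical isotropic sequence of length $10$ whose non-degeneracy equals $\Fnd(Y)$, which exists by Definition~\ref{def_Fnd}, gives $\nd(Y)\geq \Fnd(Y)$. Choosing instead $\bff\in\One$ to be a canonical isotropic sequence whose non-degeneracy equals $\Mnd(Y)$, which exists by Definition~\ref{def_Mnd}, gives $\nd(Y)\geq \Mnd(Y)$.

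I do not expect any genuine obstacle here: the statement is a direct unwinding of the definitions, and the only step requiring verification is that passing to the nef subsequence preserves the isotropic-sequence structure, which is immediate from the intersection relations. The mild subtlety worth stating explicitly is that the subsequence of nef classes need not be canonical itself, but this is irrelevant, since $\nd(Y)$ is defined purely in terms of the existence of \emph{some} non-degenerate isotropic sequence of a given length, with no canonicity requirement.
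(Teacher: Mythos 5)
Your proposal is correct and matches the paper's proof in essence: the paper's argument is simply the terse observation that any canonical sequence of non-degeneracy $c$ yields $\nd(Y)\geq c$, and then takes the maximum over length-$10$ sequences and over sequences in $\One$, exactly as you do. Your only addition is to spell out the (correct and worth stating) step that the nef subsequence of a canonical sequence is itself a non-degenerate isotropic sequence, which the paper leaves implicit.
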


\begin{proof}
Given any length $10$ canonical isotropic sequence, its non-degeneracy $c$ satisfies $\nd(Y)\geq c$. Hence, $\nd(Y)\geq\Fnd(Y)$. Similarly, by considering a canonical isotropic sequence in $\One$, its non degeneracy $c$ also satisfies $\nd(Y)\geq c$, from which follows that $\nd(Y)\geq\Mnd(Y)$.
\end{proof}

\begin{proposition} \label{prop:casesNdFndMnd}
We have $\nd(Y)=\Fnd(Y)=\Mnd(Y)$, except possibly when:
\begin{enumerate}[(i)]
    \item $\nd(Y)=10=\Fnd(Y)$ and $\Mnd(Y)= 9$;
    \item $\nd(Y)=10=\Fnd(Y)$ and $\Mnd(Y)= 8$;
    \item $\nd(Y)=9=\Fnd(Y)$, in which case $\Mnd(Y)=8$;
    \item $\nd(Y)=9=\Mnd(Y)$, in which case $\Fnd(Y)=8$.
\end{enumerate}
\end{proposition}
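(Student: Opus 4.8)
The plan is to argue by the value of $n \coloneqq \nd(Y)$, fixing throughout a non-degenerate isotropic sequence $\bff = (f_1,\ldots,f_n)$ realizing it. Since all its members are nef, $\bff$ is automatically canonical with non-degeneracy $n$, and the same holds for each of its truncations $(f_1,\ldots,f_k)$, which remain non-degenerate isotropic sequences. Two elementary bounds are used repeatedly: $\Fnd(Y),\Mnd(Y)\leq n$ by Lemma~\ref{lem_NdGeqMax}, and $\Mnd(Y)\leq 9$ because every sequence in $\One$ has length $9$.

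First I would dispatch the two extreme cases. If $n\leq 8$, then $\bff$ has length $\leq 8$, so Lemma~\ref{lem_maximal sequences}~(1) extends it both to a canonical sequence of length $10$ and to one in $\One$, each of non-degeneracy $\geq n$; together with the upper bounds this forces $\nd(Y)=\Fnd(Y)=\Mnd(Y)=n$, so no exceptional case arises. If $n=10$, then $\bff$ itself is a canonical length-$10$ sequence of non-degeneracy $10$, giving $\Fnd(Y)=10$; to control $\Mnd(Y)$ I would truncate $\bff$ to $(f_1,\ldots,f_8)$ and apply Lemma~\ref{lem_maximal sequences}~(1)(b) to extend it into $\One$ with non-degeneracy $\geq 8$, so that $\Mnd(Y)\in\{8,9\}$. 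This yields exactly cases (i) and (ii).

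The heart of the argument is $n=9$, where I would split on the orbit of $\bff$ under the dichotomy of Proposition~\ref{prop_OrbitsLength9}. If $\bff\in\One$, it is itself a canonical sequence in $\One$ of non-degeneracy $9$, so $\Mnd(Y)=9$; truncating to $(f_1,\ldots,f_8)$ and extending to length $10$ via Lemma~\ref{lem_maximal sequences}~(1)(a) gives $\Fnd(Y)\geq 8$, hence $\Fnd(Y)\in\{8,9\}$, which is either the equality case or case (iv). Symmetrically, if $\bff\in\Oext$, Lemma~\ref{lem_maximal sequences}~(2)(a) extends it to length $10$ with non-degeneracy $\geq 9$, so $\Fnd(Y)=9$, while truncation and Lemma~\ref{lem_maximal sequences}~(1)(b) give $\Mnd(Y)\geq 8$, hence $\Mnd(Y)\in\{8,9\}$, giving the equality case or case (iii). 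Since $\Fnd(Y)$ and $\Mnd(Y)$ are intrinsic to $Y$, whichever orbit $\bff$ happens to lie in produces an admissible pair, and the three outcomes $(9,9)$, $(9,8)$, $(8,9)$ exhaust the possibilities.

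The main obstacle is conceptual rather than computational: one must see that the ``other'' invariant never drops below $8$, and the clean mechanism for this is the truncate-to-length-$8$-and-re-extend trick, which crucially relies on the fact that a truncation of a non-degenerate (hence canonical) sequence is again canonical, so that Lemma~\ref{lem_maximal sequences} genuinely applies to it. The second delicate point is the $n=9$ dichotomy: one has to use Proposition~\ref{prop_OrbitsLength9} to decide which of $\Fnd$ or $\Mnd$ attains the value $9$, and to verify that the bookkeeping of the two orbits produces precisely cases (iii) and (iv) with no further possibilities.
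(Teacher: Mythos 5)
Your proof is correct and follows essentially the same route as the paper's: a case analysis on $\nd(Y)$ using Lemma~\ref{lem_maximal sequences} for extensions, Lemma~\ref{lem_NdGeqMax} for the upper bounds, and the $\Oext$/$\One$ dichotomy when $\nd(Y)=9$. Your truncate-to-length-$8$-and-re-extend step in the $\nd(Y)=9$ branches makes explicit the lower bounds ($\Fnd(Y)\geq 8$, resp.\ $\Mnd(Y)\geq 8$) that the paper leaves implicit in cases (iii) and (iv), but this is a refinement of detail rather than a different argument.
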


\begin{proof}
Let $\bff = (f_1,\ldots,f_{\nd(Y)})$ be a non-degenerate isotropic sequence. If $\nd(Y) \leq 8$,  then by Lemma~\ref{lem_maximal sequences}~(1)~(a), $\bff$ extends to a sequence of length $10$ of non-degeneracy $c'\geq \nd(Y)$. Thus, $\Fnd(Y)\geq\nd(Y)$, and hence $\Fnd(Y)= \nd(Y)$ by Lemma~\ref{lem_NdGeqMax}. In the same way, applying Lemma~\ref{lem_maximal sequences}~(1)~(b) to $\bff$ we obtain that $\nd(Y)=\Mnd(Y)$.

Assume now that $\nd(Y)= 9$. If $\bff \in \Oext$, then by Lemma~\ref{lem_maximal sequences}~(2) applied to $(f_1,\ldots,f_9)$ we have that $\Fnd(Y) \geq 9$, and hence $\Fnd(Y) = \nd(Y) = 9$ by Lemma~\ref{lem_NdGeqMax}. Then, if $\Mnd(Y)=8$ we recover case (iii), otherwise the three invariants coincide again. On the other hand, if $\bff \in \One$ then $\Mnd(Y) = 9$. Thus, the three invariants coincide, or we are in case (iv).

Finally, if $\nd(Y) = 10$, then by definition also $\Fnd(Y)=10$. Applying Lemma~\ref{lem_maximal sequences}~(1)~(b) to $(f_1,\ldots,f_8)$ shows that $\Mnd(Y)\geq 8$, which yields the remaining cases: (i) and (ii).
\end{proof}

\begin{corollary}\label{cor:max}
We have that $\nd(Y)=\max\{\Fnd(Y),\Mnd(Y)\}$.
\end{corollary}

\begin{proof}
This is true because the claimed equality holds in all the different cases in Proposition~\ref{prop:casesNdFndMnd}.
\end{proof}

\begin{remark}
A general Enriques surface $Y$ does not contain smooth rational curves. So, in this case, $\nd(Y)=\Fnd(Y)=10$ and $\Mnd(Y)=9$. If $\nd(Y)\leq8$, then the three non-degeneracy invariants coincide, and examples of Enriques surfaces such that $\nd(Y)\leq8$ are known. On the other hand, we do not know examples of Enriques surfaces satisfying (ii), (iii), or (iv) from Proposition~\ref{prop:casesNdFndMnd}. See Section~\ref{sec:Mukai} for further observations and questions.
\end{remark}

\subsection{Upper bounds for Fano and Mukai non-degeneracy invariants}
In Theorem~\ref{cor_ndUpperBound} we produce an upper bound for the Fano and Mukai non-degeneracy invariants of an Enriques surface $Y$. 
Let $\mathrm{aut}(Y)$ denote the subgroup of isometries of $S_Y$ induced by the action of the automorphism group $\mathrm{Aut}(Y)$. The next objects we introduce will be often used in the sequel.

\begin{definition}
\label{def:set-of-polarizations}
For an Enriques surface $Y$, denote by $\sfF_Y \subseteq S_Y$ (resp. $\sfM_Y \subseteq S_Y$) the set of all its numerical Fano polarizations (resp. its numerical Mukai polarizations). A \emph{set of representatives for the orbits of the action of $\aut(Y)$ on $\sfF_Y$} will be a subset $\sfF_Y'\subseteq\sfF_Y$ such that $\sfF_Y'/\aut(Y)=\sfF_Y/\aut(Y)$. We remark that we allow for different elements in $\sfF_Y'$ to possibly be in the same $\aut(Y)$-orbit. We define in an analogous way a set of representatives $\sfM_Y'$ for the orbits of the action of $\aut(Y)$ on $\sfM_Y$.
\end{definition}

We have then the following upper bounds on the Fano and Mukai non-degeneracy invariants.

\begin{theorem}
\label{cor_ndUpperBound}
Let $Y$ be an Enriques surface and let $\sfF_Y'$ and $\sfM_Y'$ as in Definition~\ref{def:set-of-polarizations}.
\begin{enumerate}
    \item The following two conditions are equivalent:
    \begin{itemize}
        \item $\Fnd(Y)\leq10-d$ for some positive integer $d$;
        \item for every $h\in\sfF_Y'$ there exist at least $d$ distinct smooth rational curves $R_1,\dots,R_d \subseteq Y$ such that $h\cdot R_i=0$ for $i=1,\ldots,d$.
    \end{itemize}
\item The following two conditions are equivalent:
    \begin{itemize}
    \item $\Mnd(Y)\leq 9-d$ for some positive integer $d$;
    \item for every $v\in\sfM_Y'$, with associated canonical isotropic sequence $(g_1,\ldots,g_9)$ and $H = \langle g_1,\ldots,g_9 \rangle^\perp$, there exist at least $d$ distinct smooth rational curves $R_1,\dots,R_d \subseteq Y$ such that $v\cdot R_i=0$ and $R_i \notin H$ for $i=1,\ldots,d$.
    \end{itemize}
\end{enumerate}
This is independent from the choice of orbit representatives in $\sfF_Y'$ and $\sfM_Y'$.
\end{theorem}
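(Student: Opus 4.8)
The plan is to reduce both statements to a single counting principle: for a canonical isotropic sequence the non-degeneracy is governed by the number of distinct smooth rational curves appearing in it, and these are exactly the curves contracted by the associated polarization, with one delicate exception in the Mukai case.

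First I would establish the counting principle. Fix a numerical Fano polarization $h$ and let $(f_1,\ldots,f_{10})$ be its canonical isotropic sequence, which is unique up to reordering by Lemma~\ref{lem_FanoPolCanonicalSeq} and Lemma~\ref{lem_UniqueCanonicalFano}~(1). If its non-degeneracy is $c$, then by the structure of canonical sequences in Proposition~\ref{lem:CanonicalSequences} the degenerate classes are in bijection with the distinct smooth rational curves appearing in the sequence, so there are exactly $10-c$ such curves. By Lemma~\ref{lem_UniqueCanonicalFano}~(2) these coincide with the set $\{R : h\cdot R=0\}$ of $(-2)$-curves orthogonal to $h$. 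Hence a Fano polarization $h$ admits a canonical sequence of non-degeneracy $c$ if and only if it is orthogonal to exactly $10-c$ smooth rational curves. For the Mukai case I would argue similarly with $(g_1,\ldots,g_9)\in\One$, so that the curves appearing number $9-c$; here the extra input is that each such curve $R$ satisfies $g_{i-1}\cdot R=1$ for some $i$ and is therefore not in $H=\langle g_1,\ldots,g_9\rangle^\perp$, while Lemma~\ref{lem_UniqueCanonicalMukai}~(2) shows that the $(-2)$-curves with $v\cdot R=0$ are precisely those appearing in the sequence together with possibly a single curve lying in $H$ (the effective $(-2)$-curve of Lemma~\ref{lem_uniqueOrt}). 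Excluding this $H$-curve, the set $\{R : v\cdot R=0,\ R\notin H\}$ has exactly $9-c$ elements.

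Next I would convert the definitions of $\Fnd(Y)$ and $\Mnd(Y)$ into statements about these counts. Writing $d_h := \#\{R : h\cdot R=0\}$ for a Fano polarization and $d_v := \#\{R : v\cdot R=0,\ R\notin H\}$ for a Mukai polarization, the counting principle gives $\Fnd(Y)=\max_h(10-d_h)=10-\min_h d_h$ and $\Mnd(Y)=\max_v(9-d_v)=9-\min_v d_v$, where the maxima run over $\sfF_Y$ and $\sfM_Y$ respectively. Therefore $\Fnd(Y)\leq 10-d$ if and only if $d_h\geq d$ for every Fano polarization $h$, and $\Mnd(Y)\leq 9-d$ if and only if $d_v\geq d$ for every Mukai polarization $v$; these are exactly the two bulleted conditions, phrased first over all of $\sfF_Y$ and $\sfM_Y$.

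Finally I would pass from all polarizations to the chosen representatives. Since every $\sigma\in\aut(Y)$ preserves the intersection form, the nef cone, and the set of $(-2)$-curves, it conjugates $\wnod_Y$ to itself and hence maps the canonical sequence (and the lattice $H$) of $v$ to that of $\sigma(v)$, and likewise for Fano polarizations. Thus $d_h$ and $d_v$ are constant on $\aut(Y)$-orbits, so the conditions ``$d_h\geq d$ for all $h\in\sfF_Y$'' and ``$d_h\geq d$ for all $h\in\sfF_Y'$'' are equivalent, and similarly for Mukai; this simultaneously yields the stated equivalences and their independence from the choice of representatives. The main obstacle is the Mukai case: one must carefully separate the at-most-one contracted curve lying in $H$ --- which is orthogonal to $v$ but does not appear in the sequence and so does not lower the non-degeneracy --- from the genuine curves of the sequence, and this is precisely why the condition on $\sfM_Y'$ must impose $R_i\notin H$.
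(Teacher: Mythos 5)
Your proof is correct and takes essentially the same route as the paper's: both rest on the bijection between numerical Fano (resp.\ Mukai) polarizations and canonical isotropic sequences of length $10$ (resp.\ in $\One$) from Lemmas~\ref{lem_FanoPolCanonicalSeq} and \ref{lem_UniqueCanonicalFano} (resp.\ Lemmas~\ref{lem_MukaiPolCanonicalSeq} and \ref{lem_UniqueCanonicalMukai}), on identifying the $(-2)$-curves orthogonal to the polarization with the curves appearing in the sequence, and on transporting along $\aut(Y)$ to reduce to the representatives in $\sfF_Y'$ and $\sfM_Y'$. Your repackaging via the identities $\Fnd(Y)=10-\min_{h\in\sfF_Y} d_h$ and $\Mnd(Y)=9-\min_{v\in\sfM_Y} d_v$, together with the explicit treatment of the at-most-one orthogonal curve lying in $H$ (Lemma~\ref{lem_uniqueOrt}), just makes quantitative what the paper proves by the two direct implications and leaves implicit in its ``the Mukai case is the same'' remark.
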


\begin{proof}
Suppose that, for any element in $\mathsf{F}_Y'$, the condition in the statement holds. Then, to prove that $\Fnd(Y)\leq10-d$, it suffices to show that every canonical isotropic sequence $\mathbf{f}\coloneqq (f_1,\ldots, f_{10})$, has non-degeneracy $c\leq 10-d$.

Fix a canonical sequence $\bff$, and let $h$ be the associated Fano polarization (as in Lemma~\ref{lem_FanoPolCanonicalSeq}~(2)). 
By definition of $\mathsf{F}_Y'$, there exists $\phi\in\mathrm{aut}(Y)$ such that $\phi(h)\in\mathsf{F}_Y'$. So, by hypothesis, there exist distinct smooth rational curves $R_1,\ldots,R_d$ such that $\phi(h)\cdot R_i = 0$ for $i=1,\ldots,d$. Since $\phi$ is induced on $S_Y$ by an automorphism of $Y$, we have that $S_i\coloneqq \phi^{-1}(R_i)$ for $i=1,\ldots,d$ still correspond to classes of distinct smooth rational curves on $Y$ which satisfy $h\cdot S_i = \phi(h)\cdot R_i = 0$. 
By Lemma~\ref{lem_UniqueCanonicalFano}~(2), the curves $S_1,\ldots,S_d$ appear in the sequence $\bff$. Again by the definition of a canonical sequence (Proposition~\ref{lem:CanonicalSequences}), this implies that there are $d$ distinct non-nef classes in $\bff$. Thus, by definition, the sequence $(f_1,\ldots,f_{10})$ is $c$-non-degenerate with $c\leq10-d$. 

Conversely, assume that $\Fnd(Y)\leq10-d$. Pick $h\in\sfF_Y'$. By Lemma~\ref{lem_FanoPolCanonicalSeq}~(1), there exists a canonical isotropic sequence $\bff$ such that $h=\frac 13(f_1+\ldots+f_{10})$. By assumption, $\bff$ is $c$-non-degenerate with $c\leq 10-d$. By the structure of a canonical sequence (Proposition~\ref{lem:CanonicalSequences}) and Definition~\ref{def_ndForSequence}, since $\bff$ is $c$-non-degenerate then there are $10-c$ distinct smooth rational curves $R_i$ appearing in $\bff$. We have that $h\cdot R_i=0$ for all $i$ by Lemma~\ref{lem_UniqueCanonicalFano} (2). For $d \leq 10-c$ (so that $c\leq 10- d$), the collection $\{R_1,\ldots,R_d\}$ is a collection of distinct smooth rational curves satisfying $h \cdot R_i=0$. 

The proof in the case of Mukai polarizations is the same, using Lemma~\ref{lem_MukaiPolCanonicalSeq} and Lemma~\ref{lem_UniqueCanonicalMukai} instead of Lemma~\ref{lem_FanoPolCanonicalSeq} and Lemma~\ref{lem_UniqueCanonicalFano}.
\end{proof}

\subsection{Enriques surfaces with finite automorphism groups: numerical Fano and Mukai polarizations}
\label{sec:num-Fano-pol-En-surf-with-finite-aut}

Let $Z$ be an Enriques surface with finite automorphism group. Then, $Z$ has finitely many smooth rational curves and elliptic fibrations. Any canonical isotropic sequence in $Z$ is constructed by combining elements of these two sets, and therefore there are only finitely many possibilities for a Fano or Mukai polarization, which is determined uniquely by its canonical sequence as in Lemma~\ref{lem_FanoPolCanonicalSeq} and Lemma~\ref{lem_MukaiPolCanonicalSeq}. In other words, $\sfF_Z$ and $\sfM_Z$ are finite sets. We obtain a complete list of such polarizations by an exhaustive computer search. This list, which will be used in the proof of Theorem~\ref{thm:ndTauBarTau}, is available at \cite{DataNew}. In \eqref{table:|FZ|-|MZ|-Kondo} we report the cardinalities of $\sfF_Z,\sfM_Z$ and the number of orbits modulo $\mathrm{aut}(Z)$ (we include this information for completeness, although it is not needed for the rest of the article).

\begin{equation}\label{table:|FZ|-|MZ|-Kondo}
\begin{tabular}{c|c|c|c|c|c|c|c}
Type of $Z$ in \cite{Kon86}              & I     & II  & III  & IV    & V     & VI   & VII  \\ \hline
$|\sfF_Z|$         &  74   & 573 & 2712 & 15336 & 1672  & 9031 & 8561 \\ 
$|\sfF_Z/\aut(Z)|$ &  20   & 32  & 61   & 68    & 79    & 87   & 81  \\
\hline
$|\sfM_Z|$         & 142   & 1227& 6944 &  43038     & 4188  & 24665 & 22425   \\ 
$|\sfM_Z/\aut(Z)|$ & 38    & 64  & 147  &  179     & 323   & 872   & 1034   \\
\end{tabular}
\end{equation}

\begin{remark}
The computer search for elements of $\sfF_Z$ and $\sfM_Z$ starts from the list of saturated non-degenerate isotropic sequences in \cite[Section~7]{MRS22Paper}. We take all the possible subsequences and then extend them by using the finitely many smooth rational curves. This produces all the isotropic sequences of length $10$ and the isotropic sequences of length $9$ in $\One$. Hence, we obtain $\sfF_Z$ and $\sfM_Z$.
\end{remark}

\section{Specialization of the Non-degeneracy invariant in families}\label{sec3}

In this section we prove a specialization result for the non-degeneracy invariants $\nd$, $\Fnd$, and $\Mnd$ for Enriques surfaces in families.

\begin{theorem} \label{thm:family}
Let $Y\to T$ be a smooth projective morphism of schemes over $\bC$ of relative dimension $2$. Suppose that $T$ is irreducible and Noetherian and that for a closed point $0\in T(\bC)$, the fiber $Y_0$ satisfies $H^1(Y_0, \sO_{Y_0}) = H^2(Y_0, \sO_{Y_0}) = 0$. Then, for very general $t\in T(\bC)$ there is an identification $\Pic(Y_0)\cong \Pic(Y_t)$ which preserves intersections and induces an inclusion $\Nef(Y_0) \subseteq \Nef(Y_t)$.
\end{theorem}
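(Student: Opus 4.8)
The plan is to produce the identification $\Pic(Y_0)\cong\Pic(Y_t)$ from the relative Picard scheme and then read off both intersection products and nef cones fiberwise. First I would exploit the cohomological hypotheses on $Y_0$: the vanishing $H^1(Y_0,\sO_{Y_0})=0$ forces the fibers of $\mathrm{Pic}_{Y/T}\to T$ to be zero-dimensional near $0$, since the tangent space to the Picard scheme at a line bundle is $H^1(\sO)$, while $H^2(Y_0,\sO_{Y_0})=0$ kills the obstructions to deforming line bundles, so $\mathrm{Pic}_{Y/T}\to T$ is smooth and hence \'etale over a neighborhood of $0$. This is exactly the situation of \cite[Theorem~4.1]{TotaroNef}, which I would invoke: after replacing $T$ by a suitable localization, the restriction maps from the relative Picard group to the closed fiber $Y_0$ and to the geometric generic fiber $Y_{\overline\eta}$ are both isomorphisms, induced by line bundles on the total space (after a further localization of $T$ if necessary, to absorb the relative Brauer obstruction). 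Composing the two yields $\Pic(Y_0)\cong\Pic(Y_{\overline\eta})$. A standard countability argument, using that $\Pic(Y_0)=\NS(Y_0)$ is finitely generated and that the Picard rank can only jump on a countable union of proper closed subsets, then upgrades this to an isomorphism $\Pic(Y_0)\cong\Pic(Y_t)$ for very general $t\in T(\bC)$, compatible with restriction from line bundles living over a neighborhood in the family.

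Next I would check that this identification preserves intersection products. Given two classes on $Y_0$, represent them by relative line bundles $\sL,\sL'$ on the total space; the intersection number $(\sL_s\cdot\sL'_s)$ on a fiber $Y_s$ is deformation-invariant, since it is extracted from the Euler characteristics $\chi(Y_s,\sL_s^{\otimes a}\otimes(\sL'_s)^{\otimes b})$, which are locally constant in a flat family. Hence the identification $\Pic(Y_0)\cong\Pic(Y_t)$ is an isometry for the intersection form, and in particular it identifies $\Num(Y_0)\otimes\bR$ with $\Num(Y_t)\otimes\bR$.

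Finally, to obtain $\Nef(Y_0)\subseteq\Nef(Y_t)$ I would compare cones of curves and dualize. For a numerical class $\gamma$, the locus $Z_\gamma\subseteq T$ of fibers on which $\gamma$ is effective is closed, by upper semicontinuity of $h^0$ applied to a relative line bundle representing $\gamma$ and its finitely many torsion twists. If $\gamma$ is not effective on $Y_0$ then $0\notin Z_\gamma$, so $Z_\gamma$ is a proper closed subset of the irreducible $T$; since there are only countably many classes $\gamma$, a very general $t$ avoids the countable union $\bigcup_{\gamma\text{ not eff. on }Y_0}Z_\gamma$. For such $t$, every class effective on $Y_t$ is effective on $Y_0$, i.e.\ $\overline{\mathrm{NE}}(Y_t)\subseteq\overline{\mathrm{NE}}(Y_0)$ under the identification. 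Dualizing reverses this inclusion and gives $\Nef(Y_0)\subseteq\Nef(Y_t)$, as desired. The hard part, in my view, is the first paragraph: ensuring that the \'etale Picard scheme furnishes an identification that is genuinely \emph{induced by line bundles on the total space}, since it is precisely this feature that lets flatness transport both intersection numbers and effectivity to nearby fibers; once that is in place, the intersection invariance and the nef comparison are routine consequences of flatness and a standard very-general argument.
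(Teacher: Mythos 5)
Your proposal is correct and, in substance, it is the paper's proof: the same three inputs appear in the same roles, namely the argument of \cite[Theorem~4.1]{TotaroNef} to identify the Picard groups of the special and geometric generic fibers via line bundles on the total space, deformation invariance of Euler characteristics in a flat family to transport intersection numbers, and semicontinuity of $h^0$ to specialize effectivity from general fibers down to $Y_0$. The differences are organizational. The paper first replaces $T$ by a discrete valuation ring mapping in via \cite[Tag~0CM2]{stacks-project}, then \emph{henselizes}, and identifies $Y_{\overline{\eta}}$ with $Y_t$ for very general $t$ by \cite[Lemma~2.1]{Vial13}; this is precisely what makes your ``suitable localization'' legitimate, since a Zariski localization of $T$ does not make $\Pic(Y)\to\Pic(Y_0)$ surjective --- one genuinely needs the henselian (or \'etale-local) setting to absorb the Brauer obstruction you flag. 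Relatedly, your loci $Z_\gamma$ are closed only where a relative line bundle representing $\gamma$ actually exists, i.e.\ on an \'etale neighborhood, so you would still need to argue that their images in $T$ lie in proper closed subsets (this works for an irreducible \'etale cover dominating $T$, but it is a step that must be written down). The paper's finish sidesteps both points: given nef $L_0$, it tests $L_{\overline{K}}$ against a single effective curve $C_{\overline{K}}$, descends effectivity to $C_K$ by \cite[Theorem~14.85]{GW10}, extends over the DVR and specializes by semicontinuity to an effective $C_0$, giving $L_{\overline{K}}\cdot C_{\overline{K}}=L_0\cdot C_0\geq 0$; your cone-duality packaging ($\overline{\mathrm{NE}}(Y_t)\subseteq\overline{\mathrm{NE}}(Y_0)$, then dualize, which is legitimate on surfaces where divisor and curve classes coincide) is an equivalent variant once the $Z_\gamma$ point is settled.
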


\begin{proof}
Let $\overline{\eta}$ be the geometric generic point of $T$: that is, if $\eta$ is the generic point of $T$, then $\overline{\eta}$ denotes the morphism $\Spec\left(\overline{k(\eta)}\right)\rightarrow T$, where $k(\eta)$ is the residue field of $\eta$. By \cite[Lemma~2.1]{Vial13}, the scheme $Y_{\overline{\eta}}$ is abstractly isomorphic to $Y_t$, where $t \in T(\bC)$ is very general. Then, it suffices to show that there is an identification $\Pic(Y_{\overline{\eta}})\cong \Pic(Y_0)$ which preserves the intersections and induces an inclusion $\Nef(Y_0) \subseteq \Nef(Y_{\overline{\eta}})$.

We then make the following standard reduction. Let $K$ be the residue field $k(\eta)$. By \cite[\href{https://stacks.math.columbia.edu/tag/0CM2}{Tag 0CM2}]{stacks-project}, there exists a discrete valuation ring $R$ with fraction field $K$ and a morphism $\Spec(R) \to T$ such that the closed point of $\Spec(R)$ maps to $0\in T$ and the restriction $\Spec(K)\to T$ has image $\eta$. Consider the pulled back family $Y_R=Y\times_T \Spec(R) \to \Spec(R)$. The geometric generic fiber of $Y_R\rightarrow\Spec(R)$, which is $Y_{\overline{K}}$, is isomorphic to $Y_{\overline{\eta}}$. Moreover, the fiber over the closed point of $Y_R\rightarrow\Spec(R)$ is isomorphic to $Y_0$. Thus, we can assume that $T=\Spec(R)$ with $0\in T$ the unique closed point.

Next, replacing $R$ with its henselianization $R\hookrightarrow H$, replaces the residue field $k\cong\mathbb{C}$ and the field of fractions $K$ of $R$ by appropriate algebraic extensions $k\hookrightarrow k'$ and $K\hookrightarrow K'$ (for basic facts about henselian rings we point the reader to \cite[Section~2.3]{NeronModels}).
We now identify the special fiber and the geometric generic fiber of the pulled back family $Y_H\rightarrow\Spec(H)$. The special fiber is still $Y_0$ because $k'\cong k$. For the geometric generic fiber, let us choose an algebraic closure $\overline{K}$ which contains $K'$. Hence, $\overline{K}$ is also an algebraic closure of $K'$. Thus, $Y_H \to \Spec(H)$ has the same geometric generic fiber as $Y\to \Spec(R)$. Therefore, we may replace $R$ with its henselization and assume that $R$ is a henselian discrete valuation ring.

With this henselian assumption, we fall within the same set of assumptions as in the proof of \cite[Theorem~4.1]{TotaroNef}, and by the same arguments we have that the restriction maps $\Pic(Y) \to \Pic(Y_K)$, $\Pic(Y)\to \Pic(Y_0)$, and the pullback map $\Pic(Y_K) \to \Pic(Y_{\overline{K}})$ are isomorphisms (to show this, \cite{TotaroNef} uses the assumption that $H^1(Y_0, \sO_{Y_0}) = H^2(Y_0, \sO_{Y_0}) = 0$).

The composition $\Pic(Y_0) \cong \Pic(Y_{\overline{K}})$ preserves the intersection product of divisors: indeed, the intersection is determined by the Euler characteristic of line bundles on $Y$ \cite[Definition~1.7]{Debarre_book}, which are preserved by flatness of the family \cite[Chapter~III, Section~9.9]{Har77}. Hence, the isomorphism induces an isomorphism of $\mathbb{R}$-vector spaces $N^1(Y_0)_{\mathbb{R}}\cong N^1(Y_{\overline{K}})_{\mathbb{R}}$.

Next, we show that $\Nef(Y_0)\subseteq\Nef(Y_{\overline{K}})$ through the above isomorphisms. Let $L_0$ be a nef line bundle on $Y_0$ and denote by $L_{\overline{K}}$ the pullback to $Y_{\overline{K}}$ of an extension of $L_0$ to $Y$. We want to prove that $L_{\overline{K}}$ is nef. For this purpose, let $C_{\overline{K}}$ be an effective curve in $Y_{\overline{K}}$ and let us verify that $L_{\overline{K}}\cdot C_{\overline{K}}\geq0$.

Since the pullback map $\Pic(Y_K) \to \Pic(Y_{\overline{K}})$ is an isomorphism, we have that $C_{\overline{K}}$ is the pullback of a divisor $C_K$, unique up to linear equivalence. By \cite[Theorem~14.85]{GW10} we have that also $C_K$ is effective.
Let $C$ be the image of $C_K$ via the inverse of the restriction map $\Pic(Y) \to \Pic(Y_K)$. We have that $C$ is effective on the generic fiber of $Y\rightarrow\Spec(R)$ because, by definition, $C$ restricts to it giving $C_K$. Therefore, by semicontinuity \cite[Chapter~III, Section~12.8]{Har77}, we have that $C_0:=C|_{Y_0}$ is effective. Since the intersection products are preserved, we have that
\[
L_{\overline{K}}\cdot C_{\overline{K}} = L_0 \cdot C_0,
\]
where $L_0 \cdot C_0 \geq 0$ by nefness of $L_0$ and effectivity of $C_0$. This implies what we needed.
\end{proof}

\begin{corollary}\label{cor:family}
Let $Y\to T$ be a family of Enriques surfaces with $T$ irreducible and Noetherian scheme. Let $0\in T(\bC)$ be a closed point. Then, $\nd(Y_0)\leq \nd(Y_{t})$ for a very general $t \in T(\bC)$. Moreover, also $\Fnd(Y_0)\leq \Fnd(Y_{t})$, $\Mnd(Y_0)\leq \Mnd(Y_{t})$.
\end{corollary}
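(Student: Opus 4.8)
The plan is to deduce Corollary~\ref{cor:family} directly from Theorem~\ref{thm:family} by transferring non-degenerate isotropic sequences from the special fiber $Y_0$ to the geometric generic fiber, identified with a very general $Y_t$. First I would check that the hypotheses of Theorem~\ref{thm:family} are satisfied: since $Y\to T$ is a family of Enriques surfaces, it is a smooth projective morphism of relative dimension $2$, and every Enriques surface $Y_0$ satisfies $H^1(Y_0,\sO_{Y_0})=H^2(Y_0,\sO_{Y_0})=0$ by the very definition of an Enriques surface (vanishing irregularity and geometric genus). Thus Theorem~\ref{thm:family} applies and yields, for very general $t\in T(\bC)$, an isometry $\varphi\colon\Pic(Y_0)\xrightarrow{\sim}\Pic(Y_t)$ preserving the intersection form and satisfying $\varphi(\Nef(Y_0))\subseteq\Nef(Y_t)$. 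Since for Enriques surfaces numerical and algebraic equivalence agree rationally and $S_{Y}=\Num(Y)$ is the relevant lattice, $\varphi$ descends to an isometry $S_{Y_0}\cong S_{Y_t}$ with the same two properties.

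Next I would carry out the transfer of sequences. Let $\bff=(f_1,\ldots,f_n)$ be a non-degenerate isotropic sequence on $Y_0$ realizing $n=\nd(Y_0)$, so each $f_i$ is nef and $f_i\cdot f_j=1-\delta_{ij}$. Applying $\varphi$, the classes $\varphi(f_1),\ldots,\varphi(f_n)$ on $Y_t$ still satisfy $\varphi(f_i)\cdot\varphi(f_j)=1-\delta_{ij}$ because $\varphi$ preserves intersections, and each $\varphi(f_i)$ is nef because $\varphi(\Nef(Y_0))\subseteq\Nef(Y_t)$. Hence $(\varphi(f_1),\ldots,\varphi(f_n))$ is a non-degenerate isotropic sequence on $Y_t$, giving $\nd(Y_t)\geq n=\nd(Y_0)$, which is the first assertion.

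For the refinements $\Fnd$ and $\Mnd$, I would argue that the same isometry respects the finer structure. By Corollary~\ref{cor:max} it would not suffice merely to transfer arbitrary non-degenerate sequences, since $\Fnd$ and $\Mnd$ are computed using canonical isotropic sequences of length $10$ (respectively those in $\One$ of length $9$) together with their non-degeneracy counts. The key point is that whether a class is nef, isotropic, or the class of a $(-2)$-curve is detected purely by the intersection form and the nef cone: a $(-2)$-curve is an effective class of self-intersection $-2$ lying in the boundary of $\Nef(Y_t)$, and $\varphi$ preserves all of these data. Concretely, given a canonical sequence of length $10$ on $Y_0$ with non-degeneracy $c=\Fnd(Y_0)$, its image under $\varphi$ is again an isotropic sequence of length $10$ whose nef members remain nef; the number of nef members can only increase, since a class $f_i$ that was nef on $Y_0$ stays nef on $Y_t$, whence the non-degeneracy of the transferred sequence is at least $c$, giving $\Fnd(Y_t)\geq\Fnd(Y_0)$. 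The identical argument, using Lemma~\ref{lem_MukaiPolCanonicalSeq} and Proposition~\ref{prop_OrbitsLength9} (primitivity versus divisibility by $2$ of $\sum f_i$ is preserved by the lattice isometry $\varphi$, so the class in $\One$ is preserved), yields $\Mnd(Y_t)\geq\Mnd(Y_0)$.

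The main obstacle I anticipate is the subtlety in the $\Fnd$ and $\Mnd$ cases: one must verify that the image of a \emph{canonical} sequence is a sequence with non-degeneracy at least as large, rather than needing it to be canonical on $Y_t$ itself. The clean way around this is to observe that the non-degeneracy of a sequence can be read off as the number of nef classes among its members (after acting by $\wnod$), and since nefness is preserved and possibly gained under $\varphi$, degeneracies can only disappear, never appear. In fact, since $\Nef(Y_0)\subseteq\Nef(Y_t)$ is only an inclusion and not an equality, new elliptic fibrations and Fano or Mukai polarizations may appear on $Y_t$, which is exactly why the inequalities go in the direction $\nd(Y_0)\leq\nd(Y_t)$ and not the reverse; I would make sure the write-up emphasizes that the inclusion of nef cones is precisely what forbids a decrease.
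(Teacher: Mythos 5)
Your proposal is correct and follows essentially the same route as the paper: verify the hypotheses of Theorem~\ref{thm:family} from the definition of an Enriques surface, transfer isotropic sequences through the intersection-preserving identification $\Pic(Y_0)\cong\Pic(Y_t)$ using $\Nef(Y_0)\subseteq\Nef(Y_t)$ to get $\nd(Y_0)\leq\nd(Y_t)$ and $\Fnd(Y_0)\leq\Fnd(Y_t)$, and for $\Mnd$ invoke Proposition~\ref{prop_OrbitsLength9} to see that non-extensibility (divisibility of $\sum_i f_i$ by $2$) is preserved by the identification. If anything, you are slightly more careful than the paper's own write-up on the one delicate point, namely that the transferred sequence need not be canonical on $Y_t$ but its non-degeneracy can only increase since nefness is preserved and may be gained.
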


\begin{proof}
The family $Y \to T$ satisfies the hypotheses of Theorem \ref{thm:family}, so we have an inclusion $\Nef(Y_0) \subseteq \Nef(Y_t)$. Hence, every non-degenerate isotropic sequence of $Y_0$ can be viewed as a non-degenerate isotropic sequence of $Y_t$ of the same length. From this follows that $\nd(Y_{0})\leq\nd(Y_t)$.

Next, we consider the case of the Fano non-degeneracy invariant. Let $(f_1,\ldots,f_{10})$ be a isotropic sequence on $Y_0$ of non-degeneracy $c$. Via the isomorphism $\Pic(Y_0)\cong\Pic(Y_t)$ and the containment $\Nef(Y_0) \subseteq \Nef(Y_t)$, we have that $(f_1,\ldots,f_{10})$ corresponds to a length $10$ isotropic sequence in $Y_t$ also of non-degeneracy $c$. Hence, $\Fnd(Y_{0})\leq\Fnd(Y_t)$.

Lastly, for the Mukai non-degeneracy invariant, consider a non-extensible isotropic sequence $(f_1,\ldots,f_9)$ on $Y_0$. Again, the above argument applies if we can show that the corresponding isotropic sequence in $Y_t$ of length $9$ is also non extensible. However, this property is characterized by the intersection product (see Proposition~\ref{prop_OrbitsLength9}), which is preserved by the isomorphism $\Pic(Y_0)\cong\Pic(Y_t)$ as shown in the proof of Theorem~\ref{thm:family}.
\end{proof}

\section{Specialization of \texorpdfstring{$\ttbar$}{tau,bartau}-generic Enriques surfaces}\label{sec4}

In this section we develop an application of Corollary~\ref{cor:family} to certain families of Enriques surfaces. To construct these families we combine the general machinery for polarized K3 surfaces by Dolgachev \cite{Dol96} and Alexeev--Engel \cite{AE25} with the notion of $\ttbar$-generic Enriques surfaces introduced by Brandhorst--Shimada \cite{BS22} (see Definition~\ref{def:tautaubar-generic-Enriques-surfaces}).

The general result is Theorem~\ref{thm:boundLatticePol}. As a corollary, we compute the non-degeneracy invariant of the Enriques surfaces in the Barth--Peters family \cite{BP83} and in the first of the two families with eight disjoint smooth rational curves \cite[Example~1]{MLP02} (see Remark~\ref{rmk:Enriques-surfaces-MLP2-nd} for the second family).

\subsection{Lattice-polarized K3 surfaces}

Our exposition follows \cite{AE25}. Recall that a \emph{K3 surface} is a smooth connected projective complex surface $X$ with $\omega_X\cong\mathcal{O}_X$ and $h^1(X,\omega_X)=0$. For a K3 surface $X$ the lattice $H^2(X,\mathbb{Z})$ is independent of $X$ and is isometric to $L_{\mathrm{K3}}:=U^{\oplus3}\oplus E_8^{\oplus2}$, which is usually referred to as the \emph{K3 lattice}. An \emph{ADE K3 surface} $\overline{X}$ is a surface with only ADE singularities whose minimal resolution $X\rightarrow\overline{X}$ is a K3 surface.

Let $\Lambda$ be a primitive non-degenerate hyperbolic sublattice of the K3 lattice of singnature $(1, r - 1)$ with $r \leq 20$. Let $h\in \Lambda \otimes \bR$ be a vector contained in a small cone \cite[Definition~4.9]{AE25} satisfying $h^2>0$. A \emph{$(\Lambda,h)$-polarized K3 surface} is a pair $(\overline{X},j)$, where $\overline{X}$ is an ADE K3 surface, and $j$ is an isometric embedding $j \colon \Lambda \hookrightarrow \Pic(\overline{X})$ for which $j(h)$ is ample. A family of $(\Lambda,h)$-polarized K3 surfaces is a flat family $f\colon \overline{\sX}\to S$ of ADE K3 surfaces, together with a homomorphism $\Lambda \to \Pic_{\overline{\sX}/S}(S)$, such that every fiber is a $(\Lambda,h)$-polarized K3 surface. 

If instead we let $h$ lie in a generalized small cone \cite[Definition~5.5]{AE25} (this is always the case), let $h'$ be a positive norm vector in a small cone whose closure contains $h$. In this case, a family of \emph{$(\Lambda,h)$-polarized K3 surfaces} is a flat family $f\colon \overline{\sX}\to S$ of ADE K3 surfaces, together with a simultaneous partial resolution $\sX \to S$, which is a family of $(\Lambda,h')$-polarized K3 surfaces.

The moduli stack of $(\Lambda,h)$-polarized K3 surfaces is a smooth Deligne--Mumford stack which does not depend on $h$ \cite[Theorem~5.4 and Corollary~5.9]{AE25}. We denote it by $\sF_\Lambda$. Moreover, the universal family $\overline{\sX}\to \sF_{\lambda}$ admits a simultaneous resolution $\sX \to \sF_{\Lambda}$ to a family of $(\Lambda,h')$-polarized K3 surfaces \cite[Lemma~5.8]{AE25}. The fibers of $\sX$ are $(\Lambda,h')$-polarized K3 surfaces. 

For our purposes, we will make use of the following definition.

\begin{definition}
\label{def:very-general-polarized-K3}
We say that a K3 surface $X$ is a \textit{very general} $(\Lambda,h)$-polarized K3 surface if it is isomorphic to the geometric fiber of $\sX$ over the generic point of an irreducible component of $\sF_\Lambda$.
\end{definition}

\subsection{\texorpdfstring{$\ttbar$}{ttbar}-generic Enriques surfaces}
\label{ssec:ttbarDef}

An \emph{ADE-lattice} is an even, negative definite lattice $R$ generated by \emph{roots}, i.e. vectors $v\in R$ such that $v^2=-2$. An ADE-lattice has a basis consisting of roots whose associated dual graph is the disjoint union of some of the Dynkin diagrams $A_n$ ($n\geq1$), $D_n$ ($n\geq4$), and $E_6,E_7,E_8$. This \emph{ADE-type} for the lattice $R$ is denoted by $\tau(R)$.

In \cite{Shi21}, Shimada studied the ADE-lattices in the context of Enriques surfaces. More precisely, let $O^{\mathcal{P}}(L_{10})$ be the group of isometries of $L_{10}$ which preserve a positive half-cone $\mathcal{P}$, that is one of the two connected components of $\{v\in L_{10}\mid v\cdot v>0\}$. The following hold:
\begin{enumerate}

\item If $R_1,R_2\subseteq L_{10}$ are two ADE-lattices and $\overline{R}_1,\overline{R}_2$ denote their respective primitive closures in $E_{10}$, then also $\overline{R}_1,\overline{R}_2$ are ADE-lattices and $(\tau(R_1),\tau(\overline{R}_1))=(\tau(R_2),\tau(\overline{R}_2))$ if and only if $R_1$ and $R_2$ are in the same $O^{\mathcal{P}}(L_{10})$-orbit.

\item Let $R\subseteq L_{10}$ be an ADE-sublattice. There are $184$ possibilities for the pairs $(\tau(R),\tau(\overline{R}))$, which are listed in \cite[Table~1]{Shi21} (see also \cite[Table~1]{BS22}).
\end{enumerate}

\begin{definition}
\label{def:tautaubar-generic-Enriques-surfaces}
Let $Y$ be an Enriques surface with universal K3 cover $X\rightarrow Y$. Let $(\tau,\overline{\tau})$ one of the pairs in \cite[Table~1]{BS22}. Then $Y$ is called \emph{$(\tau,\overline{\tau})$-generic} provided the following hold:
\begin{enumerate}

\item Consider $H^{2,0}(X)\subseteq T_X\otimes\mathbb{C}$, where $T_X$ denotes the transcendental lattice of the K3 surface $X$. Then the group of isometries of $T_X$ preserving $H^{2,0}(X)$ is equal to $\{\pm\mathrm{id}_{T_X}\}$.

\item Let $R\subseteq L_{10}$ be an ADE-sublattice such that $(\tau(R),\tau(\overline{R}))=(\tau,\overline{\tau})$. Define $M_R$ to be the sublattice of $L_{10}(2)\oplus R(2)$ given by
\[
\langle(v,0),(w,\pm w)/2\mid v\in L_{10},~w\in R\rangle.
\]
Then, there exist isometries $M_R\cong S_X$ and $L_{10}\cong S_Y$ such that the following diagram commutes:

\begin{center}
\begin{tikzpicture}
\matrix(a)[matrix of math nodes,
row sep=2em, column sep=2em,
text height=1.5ex, text depth=0.25ex]
{L_{10}(2)&M_R\\
S_Y(2)&S_X.\\};
\path[right hook->] (a-1-1) edge node[]{}(a-1-2);
\path[->] (a-1-1) edge node[left]{$\cong$}(a-2-1);
\path[right hook->] (a-2-1) edge node[]{}(a-2-2);
\path[->] (a-1-2) edge node[right]{$\cong$}(a-2-2);
\end{tikzpicture}
\end{center}
\end{enumerate}
\end{definition}

Of the $184$ lattice-theoretic possibilities for the pairs $(\tau,\overline{\tau})$ classified in \cite{Shi21}, $155$ are realized by the $\ttbar$-generic Enriques surfaces. We will recall and use properties of these surfaces in Section \ref{sec:ttbarND}, where we compute values of (Fano and Mukai) non-degeneracy for them.

\subsection{Lattice polarized K3 covers of \texorpdfstring{$\ttbar$}{ttbar}-generic Enriques surfaces}

We begin by relaxing the notion of a $\ttbar$-generic Enriques surface.

\begin{definition}
\label{def:ttbar_surface}
Let $Y$ be an Enriques surface with universal K3 cover $\pi\colon X\to Y$. Let $(\tau,\overline{\tau})$ one of the pairs in \cite[Table~1]{BS22}. 
Let $R\subseteq L_{10}$ be an ADE-sublattice such that $(\tau(R),\tau(\overline{R}))=(\tau,\overline{\tau})$. Define $M_R$ as in Definition \ref{def:tautaubar-generic-Enriques-surfaces}.
We say that $Y$ is a \emph{$(\tau,\overline{\tau})$ Enriques surface} if there exists an isometry $L_{10} \cong S_Y$ and an isometric embedding $M_R \hookrightarrow S_X$ such that the following diagram commutes:

\begin{center}
\begin{tikzpicture}
\matrix(a)[matrix of math nodes,
row sep=2em, column sep=2em,
text height=1.5ex, text depth=0.25ex]
{L_{10}(2)&M_R\\
S_Y(2)&S_X.\\};
\path[right hook->] (a-1-1) edge node[]{}(a-1-2);
\path[->] (a-1-1) edge node[left]{$\cong$}(a-2-1);
\path[right hook->] (a-2-1) edge node[]{}(a-2-2);
\path[right hook->] (a-1-2) edge node[]{}(a-2-2);
\end{tikzpicture}
\end{center}

\end{definition}

If $Y$ is a $\ttbar$ Enriques surface, then its universal K3 cover $X$ is a smooth $(M_R,h)$-polarized K3 surface, where $h$ is the pullback of any ample class on $Y$. The next proposition gives a partial converse to this.

Fix $\ttbar$ to be one of the pairs in \cite[Table~1]{BS22}. Let $M_R$ be as in Definition \ref{def:tautaubar-generic-Enriques-surfaces}. Denote by $\alpha$ the inclusion of $L_{10}(2)$ into $M_R$ in Definition~\ref{def:tautaubar-generic-Enriques-surfaces}.

\begin{lemma}\label{lem:polK3}
Let $h\in\alpha(L_{10}(2))$ such that $h^2>0$. Let $X$ be a smooth $(M_R,h)$-polarized K3 surface.
Then, $X$ admits a unique Enriques involution $i$ whose action on $H^2(X,\bZ)$ is the identity on $\alpha(L_{10}(2))$. Moreover:
\begin{enumerate}
    \item The quotient $X/i$ is a $\ttbar$ Enriques surface.
\item If $X$ is a very general $(M_R,h)$-polarized K3 surface, then $X/i$ is $\ttbar$-generic.
\end{enumerate}
\end{lemma}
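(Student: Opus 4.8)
The plan is to construct the Enriques involution from the lattice-theoretic data and then verify the two claimed properties. First I would produce the involution $i$ on cohomology. Since $M_R$ is defined as a sublattice of $L_{10}(2)\oplus R(2)$, there is a natural involution $\sigma$ on $L_{10}(2)\oplus R(2)$ fixing $L_{10}(2)$ and acting as $-1$ on $R(2)$; one checks $\sigma$ preserves $M_R$ (it sends the generator $(w,\pm w)/2$ to $(w,\mp w)/2$, which lies in $M_R$) and so induces an isometry of $S_X\cong M_R$. To extend this to an Enriques involution on $X$ one must produce a fixed-point-free involution of the surface. The standard approach is via the Torelli theorem for K3 surfaces: the involution $\sigma$ extends to an isometry of $H^2(X,\bZ)=L_{\mathrm{K3}}$ acting as $-1$ on the transcendental lattice (which for a polarized K3 lies in the complement of $S_X$), hence preserving the period $H^{2,0}(X)$; one checks it maps the ample cone to itself (or can be corrected by a Weyl group element to do so), and therefore by the strong Torelli theorem it is induced by a unique automorphism $i$ of $X$. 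The condition that $i$ acts trivially on $\alpha(L_{10}(2))$ and as $-1$ on the orthogonal part forces the fixed locus to be empty, so $i$ is an Enriques involution; uniqueness follows from Torelli together with the prescribed action on $\alpha(L_{10}(2))$.

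Next I would prove statement (1). Having the Enriques involution $i$, the quotient $Y\coloneqq X/i$ is an Enriques surface and $X\to Y$ is its universal K3 cover. The identification $\Num(Y)\cong L_{10}$ arises because the invariant part of $S_X$ under $i$ is exactly $\alpha(L_{10}(2))\cong S_Y(2)$, matching the general theory that $\pi^*\colon S_Y(2)\hookrightarrow S_X$ has image the $i$-invariant sublattice. The embedding $M_R\hookrightarrow S_X$ and the isometry $L_{10}\cong S_Y$ then fit into the commuting square of Definition~\ref{def:ttbar_surface} by construction, since $\alpha$ is the chosen inclusion $L_{10}(2)\hookrightarrow M_R$ and the left and bottom maps are the tautological ones. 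This verifies that $Y$ is a $\ttbar$ Enriques surface.

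For statement (2), the point is to upgrade the embedding $M_R\hookrightarrow S_X$ to an isometry $M_R\cong S_X$ and to verify condition (1) of Definition~\ref{def:tautaubar-generic-Enriques-surfaces} on the transcendental lattice. When $X$ is very general, i.e. the geometric fiber over the generic point of an irreducible component of $\sF_{M_R}$, the Picard lattice is as small as the polarization allows, namely $\Pic(X)$ has rank equal to $\mathrm{rank}(M_R)$, which forces the embedding $M_R\hookrightarrow S_X$ to be an isomorphism after saturation. One must check that $M_R$ is already primitive (equivalently saturated) in $L_{\mathrm{K3}}$ so that no further enlargement occurs; this is where I would invoke the lattice computations underlying \cite{BS22}. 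Simultaneously, the genericity of the period forces the isometry group of the transcendental lattice $T_X$ preserving $H^{2,0}(X)$ to be $\{\pm\mathrm{id}\}$, which is exactly condition (1) of Definition~\ref{def:tautaubar-generic-Enriques-surfaces}; this is the standard statement that a very general point in a moduli of lattice-polarized K3 surfaces has no extra Hodge isometries. Together with statement (1), these give that $Y=X/i$ is $\ttbar$-generic.

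I expect the main obstacle to be the existence-and-uniqueness of the Enriques involution $i$ via Torelli, specifically verifying that the lattice isometry $\sigma$ can be chosen to act as $-1$ on the transcendental lattice and to preserve a Kähler class (or ample cone) so that it is geometrically realized and fixed-point-free. The subtlety is that Torelli-type arguments require controlling both the transcendental part and the positivity, and one must rule out the possibility that the induced involution has fixed points (which would make the quotient a rational or K3 surface rather than an Enriques surface). The divisibility structure in the definition of $M_R$, namely the half-integer generators $(w,\pm w)/2$, is precisely what guarantees that the invariant lattice is $U\oplus E_8$ scaled by $2$ and that $i^*$ acts without $+1$-eigenvectors on the transcendental lattice, hence without fixed points; making this rigorous is the technical heart of the argument.
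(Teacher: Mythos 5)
Your proposal is correct in substance but reaches the involution by a genuinely different route than the paper. You construct the isometry $\sigma$ by hand ($+1$ on $\alpha(L_{10}(2))$, $-1$ on its orthogonal complement), realize it geometrically via the strong Torelli theorem for K3 surfaces (using $\sigma(h)=h$ with $h$ ample to fix the K\"ahler chamber), and deduce fixed-point-freeness from Nikulin's classification of non-symplectic involutions, the invariant lattice $U(2)\oplus E_8(2)$ being exactly the case with empty fixed locus. The paper instead argues at the level of periods: since $\alpha(L_{10}(2))$ is primitive in $M_R$, the period point of $X$ lies in the period domain of (unpolarized) Enriques surfaces, so the Torelli theorem for Enriques surfaces \cite{Hor78a,Hor78b,Nam85} directly exhibits $X$ as the K3 cover of an Enriques surface, and uniqueness is quoted from \cite[Lemma~5.3.3~(2)]{CDL25} (a $2$-elementary invariant lattice containing the ample class $h$, with $T_X$ in the $(-1)$-eigenspace). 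Your route is more self-contained (K3 Torelli plus Nikulin, no Enriques period theory), but it obliges you to make rigorous two steps you leave implicit: first, that $\sigma$, defined on $\alpha(L_{10}(2))$ plus its orthogonal complement in $L_{\mathrm{K3}}$, actually extends to $L_{\mathrm{K3}}$ --- this works precisely because the glue group is $2$-elementary (the discriminant of $L_{10}(2)$ is $2$-torsion), so $\mathrm{id}\oplus(-\mathrm{id})$ acts trivially on it; and second, that freeness comes from the full invariant lattice in $L_{\mathrm{K3}}$ being $U(2)\oplus E_8(2)$, \emph{not} from $\sigma$ acting as $-\mathrm{id}$ on $T_X$ alone (your phrase ``without $+1$-eigenvectors on the transcendental lattice, hence without fixed points'' is not a valid implication by itself: an involution with invariant lattice $U\oplus E_8(2)$ also acts as $-1$ on the transcendental lattice of a very general member, yet its fixed locus is two elliptic curves --- though you do also assert the correct invariant-lattice condition). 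For part (2) the two arguments essentially coincide: very generality gives $\Pic(X)\cong M_R$, and since $\mathrm{rank}\, T_X\geq 3$ the isometries of $T_X$ preserving $H^{2,0}(X)$ are $\{\pm\mathrm{id}\}$; the paper cites \cite[Example~5.5]{Shi24} for precisely the ``no extra Hodge isometries'' statement you invoke as standard, so your appeal to lattice computations in \cite{BS22} for saturation of $M_R$ is unnecessary once one uses that the polarization embedding is primitive by definition.
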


\begin{proof}   
Since $X$ is $(M_R,h)$-polarized and $L_{10}(2)$ is a primitive sublattice of $M_R$ by definition, we have that the period point of $X$ is also a point in the period domain of unpolarized Enriques surfaces. Hence, by the Torelli theorem for Enriques surfaces (\cite{Hor78a,Hor78b}, \cite{Nam85}), $X$ is the K3 cover of an Enriques surface $Y$. The involution $i$ fixes $\alpha(L_{10}(2))$, which is a 2-elementary lattice (i.e., its discriminant group is isomorphic to $(\bZ/2\bZ)^\ell$ for some $\ell\geq1$) and contains the ample divisor class $h$ (it is ample by the definition of polarized K3 surface). Moreover, the involution induced by $i$ on $H^2(X,\bZ)$ contains $T_X$ within its $(-1)$-eigenspace. Then, $i$ is unique by \cite[Lemma~5.3.3~(2)]{CDL25}. By construction, $Y$ is a $\ttbar$ Enriques surface.

Suppose now that $X$ is a very general $(M_R,h)$-polarized K3 surface. Then $\Pic(X)\cong M_R$. Since $M_R$ has rank at most $19$, the rank of the transcendental lattice of a very general such $X$ is at least $3$, hence the group of isometries of $T_X$ preserving $H^{2,0}(X)$ equals $\{\pm\mathrm{id}_{T_X}\}$ by \cite[Example~5.5]{Shi24}. It follows that $X$ covers a $\ttbar$-generic Enriques surfaces.
\end{proof}

\begin{proposition}
\label{prop:InvolutionInFamily}
Let $h\in\alpha(L_{10}(2))$ such that $h^2>0$. Let $T\to\Spec(\bC)$ be a scheme and consider a family $\sX \to T$ of $(M_R,h)$-polarized K3 surfaces. For every geometric fiber $\sX_t$ of the family, let $i_t$ be the Enriques involution of $\sX_t$ from Lemma~\ref{lem:polK3}. Then, the following hold:
\begin{itemize}

\item[(i)] Up to an \'etale base change of $T$, the involutions $i_t$ glue to an involution $\iota \colon \sX \to \sX$ acting fiberwisely;

\item[(ii)] The quotient $\sY\coloneqq \sX/\langle \iota\rangle\rightarrow T$ is a family of $\ttbar$ Enriques surfaces;

\item[(iii)] If $T$ is irreducible and the geometric generic fiber of $\sX$ is a very general $(M_R,h)$-polarized K3 surface, then the geometric generic fiber of $\sY$ is a $\ttbar$-generic Enriques surface.

\end{itemize}
\end{proposition}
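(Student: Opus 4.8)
The proposition has three parts, but the real content is the gluing statement~(i); granting it, parts~(ii) and~(iii) will follow from Lemma~\ref{lem:polK3} applied fiberwise together with the fact that a free quotient commutes with base change. So the plan is to spend the effort on~(i) and then harvest the rest.

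For~(i), I would realize the fiberwise involutions $i_t$ as sections of a relative automorphism scheme and exploit its unramifiedness. Since $h\in\alpha(L_{10}(2))$ is relatively ample, $\sX\to T$ is projective and the functor of polarization-preserving automorphisms is representable by a separated $T$-scheme $\underline{\Aut}(\sX/T,h)\to T$, which is moreover finite over $T$ because the automorphism group of a polarized K3 surface is finite. The key rigidity input is that K3 fibers carry no nonzero global vector fields, i.e. $H^0(\sX_t,\Theta_{\sX_t})=0$; this kills infinitesimal automorphisms and forces $\underline{\Aut}(\sX/T,h)\to T$ to be unramified. Inside it, the polarization lattice $\alpha(L_{10}(2))$ sits as a constant sub-object of the relative Picard group, and the conditions that an automorphism act as the identity on $\alpha(L_{10}(2))$, square to the identity, and be fixed-point free are all open and closed; they cut out a subscheme $Z\subseteq\underline{\Aut}(\sX/T,h)$ that is again finite and unramified over $T$.

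Now the uniqueness clause of Lemma~\ref{lem:polK3} says that over every geometric point of $T$ there is exactly one such Enriques involution, so $Z\to T$ is finite, unramified, and bijective on geometric points. A finite unramified morphism is, étale-locally on the base, a finite disjoint union of closed immersions, and bijectivity on geometric points singles out a single component that surjects onto the base; hence after an étale surjective base change $T'\to T$ one obtains a section of $Z\times_T T'\to T'$. That section is precisely a fiberwise involution $\iota$ of $\sX\times_T T'$ restricting to $i_t$ on each fiber. I expect this descent step to be the main obstacle: one must argue that unramifiedness (the local rigidity of the $i_t$) together with the uniqueness of Lemma~\ref{lem:polK3} genuinely produces a morphism of schemes, rather than merely a compatible family of pointwise involutions, and one should check that the étale base change is harmless downstream (it changes neither the geometric fibers nor, after restricting to a dominating component, the geometric generic fiber).

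For~(ii), the involution $\iota$ restricts to the fixed-point-free Enriques involution $i_t$ on each geometric fiber and fixes the relatively ample class $h$, so the action of $\langle\iota\rangle$ on $\sX$ (over $T'$) is free with an invariant relatively ample line bundle. Consequently the quotient $\sY=\sX/\langle\iota\rangle\to T$ exists as a scheme, $\sX\to\sY$ is finite étale of degree $2$, and $\sY\to T$ is flat and projective with formation commuting with base change; each fiber is $\sX_t/i_t$, a $\ttbar$ Enriques surface by Lemma~\ref{lem:polK3}~(1). Finally, for~(iii), because the free quotient commutes with base change, the geometric generic fiber of $\sY$ is the quotient of the geometric generic fiber of $\sX$ by its Enriques involution; under the irreducibility hypothesis and the assumption that this geometric generic fiber is a very general $(M_R,h)$-polarized K3 surface, Lemma~\ref{lem:polK3}~(2) identifies the quotient as a $\ttbar$-generic Enriques surface, which is the claim.
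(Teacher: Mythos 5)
Your handling of (ii) and (iii) matches the paper, which indeed dispatches them by checking everything fiberwise via Lemma~\ref{lem:polK3}; and for (i) you take a genuinely different route from the paper, which constructs an involution $\sigma$ of the local system $H^2(\sX,\bZ)$ with $\sigma_t=i_t^*$ and integrates it to $\iota$ by the family Torelli argument of the proof of Proposition~5.3.4 in [CDL25]. However, your descent step --- the one you yourself flag as the main obstacle --- has a genuine gap: from ``$Z\to T$ finite, unramified, and bijective on geometric points'' you cannot conclude that a section exists after an \'etale surjective base change. Unramifiedness does give, \'etale-locally, a disjoint union of closed immersions, but a surjective closed immersion need not be an isomorphism: take $T=\Spec\bC[\epsilon]/(\epsilon^2)$ and $Z=\Spec\bC\hookrightarrow T$; this is finite, unramified, and bijective on geometric points, yet admits no section even after any \'etale base change (and the proposition allows an arbitrary scheme $T$, non-reduced included). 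What is missing is precisely flatness, i.e.\ that $Z\to T$ is \emph{\'etale}, and this is not formal: it is the assertion that the Enriques involution deforms along every $(M_R,h)$-polarized deformation. That assertion is where the lattice polarization must enter: a first-order polarized deformation has Kodaira--Spencer class fixed by $i_t^*$, because the periods of $M_R$-polarized deformations lie in $M_R^\perp\otimes\bC$, which is contained in the $(-1)$-eigenspace of $i_t^*$, so $i_t$ acts trivially on the polarized period domain. The vanishing of vector fields on K3 fibers, which is the only rigidity input you use, yields unramifiedness but is strictly weaker; this deformation-theoretic point is exactly the content the paper obtains instead by running the cohomological argument of [CDL25, Proposition~5.3.4].

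A secondary issue: you assert that fixed-point freeness is ``open and closed'' on the automorphism scheme. Openness is standard (the locus of automorphisms with a fixed point is the image of a closed subset under the proper map $\sX\times_T\underline{\mathrm{Aut}}\to\underline{\mathrm{Aut}}$), but closedness is not automatic: a non-symplectic involution with invariant lattice $U\oplus E_8(2)$ has the same trace on $H^2$ as an Enriques involution (so Lefschetz-number arguments do not separate them) yet fixes two elliptic curves. To close this you need that along a connected component of your locus the action on $R^2f_*\bZ$ is locally constant with invariant lattice isometric to $L_{10}(2)=U(2)\oplus E_8(2)$, and then Nikulin's classification of non-symplectic involutions, for which the fixed locus is empty exactly in that case. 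Both repairs are available, so your Aut-scheme strategy can be made to work, but as written the proof establishes less than what the key step requires, whereas the paper's route through the involution of $H^2(\sX,\bZ)$ and family Torelli packages the deformation invariance automatically.
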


\begin{proof}
The involution $\iota$ is defined by arguing as in the proof of \cite[Proposition~5.3.4]{CDL25}, starting from an involution $\sigma$ on $H^2(\sX,\bZ)$ satisfying $i_t^* = \sigma_t$, so that $\sigma$ induces the involution $i_t$ on each fiber $\sX_t$.
All the other statements in the lemma can be checked fiberwisely, and are then implied directly by Lemma~\ref{lem:polK3}.
\end{proof}

\begin{theorem}
\label{thm:boundLatticePol}
Let $Y$ be a $\ttbar$ Enriques surface. Then, we have that
\[
\nd(Y)\leq \nd(\overline{Y}),~\Fnd(Y)\leq \Fnd(\overline{Y}),~\textrm{and}~\Mnd(Y)\leq \Mnd(\overline{Y}),
\]
where $\overline{Y}$ is any $\ttbar$-generic Enriques surface.
\end{theorem}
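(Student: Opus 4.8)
The plan is to realize both $Y$ and a chosen $\ttbar$-generic surface $\overline{Y}$ as fibers of a single algebraic family $\sY\to T$ of $\ttbar$ Enriques surfaces over an irreducible Noetherian base, with $Y$ sitting over a closed point and $\overline{Y}$ appearing as the very general fiber. Once such a family is in hand, all three inequalities follow simultaneously from Corollary~\ref{cor:family}, which controls $\nd$, $\Fnd$ and $\Mnd$ at once under specialization. So the real content is the construction of the family, and the verification that its special and very general fibers are the surfaces we want.

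\emph{Constructing the base.} Let $\pi\colon X\to Y$ be the universal K3 cover and set $h=\pi^*A$ for an ample class $A$ on $Y$; then $h$ lies in $\alpha(L_{10}(2))$, satisfies $h^2>0$, and is ample on the smooth K3 surface $X$, so $X$ is an $(M_R,h)$-polarized K3 surface and defines a point $[X]$ of the moduli stack $\sF_{M_R}$. Let $\sZ$ be an irreducible component of $\sF_{M_R}$ containing $[X]$. Since $\sF_{M_R}$ is a smooth Deligne--Mumford stack of finite type over $\bC$ by \cite{AE25}, we may fix a finite-type smooth atlas $U\to\sF_{M_R}$; restricting over $\sZ$ and using surjectivity we pick $u\in U$ mapping to $[X]$, and let $T$ be an irreducible component of $U\times_{\sF_{M_R}}\sZ$ through $u$. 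As a smooth morphism is flat, every irreducible component of $U\times_{\sF_{M_R}}\sZ$ dominates $\sZ$ (a flat local homomorphism sends a generic point to a generic point), so $T$ is irreducible and Noetherian, carries a closed point $0$ over $[X]$, and has generic point mapping to the generic point of $\sZ$. Pulling back the universal family gives a family $\sX_T\to T$ of $(M_R,h)$-polarized K3 surfaces whose fiber over $0$ is $X$ and whose geometric generic fiber is a \emph{very general} $(M_R,h)$-polarized K3 surface in the sense of Definition~\ref{def:very-general-polarized-K3}.

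\emph{Passing to Enriques surfaces.} By Proposition~\ref{prop:InvolutionInFamily}(i) there is an \'etale base change $T'\to T$ over which the fiberwise Enriques involutions glue to $\iota\colon\sX_{T'}\to\sX_{T'}$; choosing a point $0'$ over $0$ and replacing $T'$ by its irreducible component through $0'$ (which again dominates $T$, hence $\sZ$, by flatness of \'etale maps) we obtain the family $\sY\coloneqq\sX_{T'}/\langle\iota\rangle\to T'$ of Proposition~\ref{prop:InvolutionInFamily}(ii). Its fiber over $0'$ is $X/i$, where $i$ is the involution of Lemma~\ref{lem:polK3}; since the deck transformation of $\pi$ is an Enriques involution fixing $\pi^*S_Y=\alpha(L_{10}(2))$ and acting by $-1$ on $T_X$, the uniqueness clause of Lemma~\ref{lem:polK3} identifies it with $i$, whence $\sY_{0'}\cong Y$. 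By Proposition~\ref{prop:InvolutionInFamily}(iii) the geometric generic fiber of $\sY$ is a $\ttbar$-generic Enriques surface.

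\emph{Conclusion and main obstacle.} Applying Corollary~\ref{cor:family} to $\sY\to T'$ with special point $0'$ yields $\nd(Y)=\nd(\sY_{0'})\leq\nd(\sY_t)$, and likewise $\Fnd(Y)\leq\Fnd(\sY_t)$ and $\Mnd(Y)\leq\Mnd(\sY_t)$, for very general $t\in T'(\bC)$. For such $t$ the fiber $\sY_t$ is abstractly isomorphic to the geometric generic fiber, hence is $\ttbar$-generic; since by \cite{BS22} all $\ttbar$-generic Enriques surfaces have the same nef cone and the same set of $(-2)$-curves up to isometry, they share the same values of $\nd$, $\Fnd$ and $\Mnd$. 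Thus $\nd(\sY_t)=\nd(\overline{Y})$ for any $\ttbar$-generic $\overline{Y}$, and analogously for the refined invariants, giving the three claimed inequalities. I expect the main obstacle to be the stack-theoretic reduction of the second paragraph: one must descend from $\sF_{M_R}$ to an honest irreducible Noetherian scheme that both carries a point reproducing $X$ (hence $Y$) and dominates $\sZ$ (hence has very general geometric generic fiber), and then preserve both properties through the \'etale base change supplied by Proposition~\ref{prop:InvolutionInFamily}.
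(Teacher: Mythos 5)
Your proposal is correct and follows essentially the same route as the paper's proof: realize $X$ as a fiber of the universal family over an irreducible scheme mapping to $\sF_{M_R}$ via an atlas (the paper invokes \cite[\href{https://stacks.math.columbia.edu/tag/0DR5}{Tag~0DR5}]{stacks-project} where you argue directly that flatness lifts generic points), quotient by the glued involution from Proposition~\ref{prop:InvolutionInFamily}, and conclude with Corollary~\ref{cor:family} plus the constancy of the invariants among $\ttbar$-generic surfaces. Your explicit identification of the special fiber $X/i$ with $Y$ via the uniqueness clause of Lemma~\ref{lem:polK3} is a detail the paper only asserts ``by construction,'' and is a welcome addition rather than a deviation.
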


\begin{proof}
Consider the universal family $\sX\to \sF_{M_R}$ of smooth $(M_R,h)$-polarized K3 surfaces. By the definition of Deligne--Mumford stack, there exists an \'etale surjective morphism $p\colon A \to \sF_{M_R}$ where $A$, called an \textit{atlas} for $\sF_{M_R}$, is a scheme. Let $\sX_A\rightarrow A$ be the pullback of $\sX\rightarrow\sF_{M_R}$ via $p$. 
Since the universal K3 covering $X\to Y$ is $(M_R,h)$-polarized, by the surjectivity of $p$ there exists $a\in A$ such that $(\sX_A)_a\cong X$.

Let $A'$ be an irreducible component of $A$ containing $a$. $A'$ is dominant on an irreducible component of $\sF_{M_R}$ by \cite[\href{https://stacks.math.columbia.edu/tag/0DR5}{Tag~0DR5}]{stacks-project}. This lemma applies to smooth morphisms from a scheme to a locally Noetherian algebraic stack, and indeed $p$ is smooth since it is \'etale (this implication reduces from the definitions \cite[Sections~101.33, 101.35]{stacks-project} to the analogous statement for algebraic spaces \cite[\href{https://stacks.math.columbia.edu/tag/04XX}{Tag~04XX}]{stacks-project}) and $\sF_{M_R}$ is locally Noetherian since it is smooth.

After possibly taking an \'etale cover of $A'$, we can apply Proposition~\ref{prop:InvolutionInFamily} and obtain a family $\sY \to A'$ with fibers $\ttbar$ Enriques surfaces. Since $A'$ dominates a component of $\sF_{M_R}$, again by Proposition~\ref{prop:InvolutionInFamily} we obtain that the geometric generic fiber $\overline{Y}$ of $\sY\to A'$ is $\ttbar$-generic. By construction, we have that $\sY_a\cong Y$.
By Corollary~\ref{cor:family}, we then have $\nd(Y)\leq \nd(\overline{Y})$, $\Fnd(Y)\leq \Fnd(\overline{Y})$, and $\Mnd(Y)\leq \Mnd(\overline{Y})$. 

Finally, the statement that $\overline{Y}$ can be taken to be any $\ttbar$-generic Enriques surface follows from the fact that
$\nd(\overline{Y})$, $\Fnd(\overline{Y})$, and $\Mnd(\overline{Y})$ only depend on $\ttbar$.
\end{proof}

Theorem \ref{thm:boundLatticePol} gives us a way to compute the non-degeneracy invariant of every surface that contains a configuration of smooth rational curves with dual graph as in \eqref{fig:configurations-smooth-rational-curves-BP-MLP1}~(a) or (b).

\begin{corollary}
\label{cor:ndBP}
Let $Y$ be an Enriques surface containing a configuration of smooth rational curves with dual graph as in \eqref{fig:configurations-smooth-rational-curves-BP-MLP1}~(a), then $\nd(Y)=4$.
\end{corollary}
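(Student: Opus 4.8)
The plan is to sandwich $\nd(Y)$ between $4$ and $4$: I would obtain the upper bound $\nd(Y)\le 4$ from the specialization machinery of Theorem~\ref{thm:boundLatticePol}, and the lower bound $\nd(Y)\ge 4$ from an explicit non-degenerate isotropic sequence read off the configuration.

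For the upper bound, the first task is to recognize $Y$ as a $\ttbar$ Enriques surface for the pair $\ttbar$ associated to the Barth--Peters family. To do this I would record the classes $F_1,\dots,F_8,L_1,N\in S_Y$ of the ten smooth rational curves in \eqref{fig:configurations-smooth-rational-curves-BP-MLP1}~(a), together with the intersection matrix prescribed by the dual graph, and identify inside $S_Y\cong L_{10}$ the ADE-sublattice $R$ they determine, computing its type $\tau(R)=\tau$ and the type $\tau(\overline R)=\overline\tau$ of its primitive closure; this singles out the relevant row of \cite[Table~1]{BS22}, namely the one with non-degeneracy value $4$. I would then pass to the universal K3 cover $\pi\colon X\to Y$ and check that the pulled-back classes, combined with the half-classes $(w,\pm w)/2$ coming from $R$, realize an isometric embedding $M_R\hookrightarrow S_X$ fitting into the commutative diagram of Definition~\ref{def:ttbar_surface}, so that $Y$ is indeed a $\ttbar$ Enriques surface. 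Granting this, Theorem~\ref{thm:boundLatticePol} yields $\nd(Y)\le\nd(\overline Y)$ for any $\ttbar$-generic $\overline Y$, and Theorem~\ref{thm:TauBarTauFinal} evaluates $\nd(\overline Y)=4$; hence $\nd(Y)\le 4$.

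For the lower bound I would exhibit a non-degenerate isotropic sequence of length $4$ supported on the configuration. The octagon $F_1,\dots,F_8$ is an affine $\widetilde A_7$ diagram, so $f_1\coloneqq F_1+\dots+F_8$ is isotropic (a direct Gram-matrix computation gives $f_1^2=0$ and $f_1\cdot F_i=0$ for $i=1,\dots,8$) and is the class of a half-fiber of a genus-one pencil on $Y$; in particular $f_1$ is nef. Using the two legs $N$ and $L_1$ together with suitable affine sub-diagrams of the octagon, I would produce three further half-fibers $f_2,f_3,f_4$, each nef because it is the half-fiber class of an honest genus-one fibration carried by the curves already present, and arranged so that $f_i\cdot f_j=1-\delta_{ij}$. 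This gives a non-degenerate isotropic sequence of length $4$, whence $\nd(Y)\ge 4$, and together with the upper bound $\nd(Y)=4$.

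The main obstacle is the identification step: verifying that merely containing the dual graph \eqref{fig:configurations-smooth-rational-curves-BP-MLP1}~(a) is enough to force the full lattice embedding $M_R\hookrightarrow S_X$ on the cover---hence to place $Y$ among the $\ttbar$ Enriques surfaces---rather than only producing some sublattice of $S_Y$. This is where the geometry of $\pi$ and the $2$-elementary structure of $L_{10}(2)$ inside $M_R$ must be used carefully. The dual, subtler point for the lower bound is ensuring that the four constructed half-fiber classes are \emph{simultaneously} nef on the possibly non-generic surface $Y$; I would secure this by realizing each of them as the half-fiber class of a fibration whose fiber is visibly supported on the given $(-2)$-curves, so that nefness is automatic.
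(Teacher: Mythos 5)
Your upper--bound argument is essentially the paper's own proof: the paper shows that any surface containing configuration \eqref{fig:configurations-smooth-rational-curves-BP-MLP1}~(a) is an $(E_8,E_8)$ Enriques surface (this is no.~145, with $\nd=4$), and then applies Theorem~\ref{thm:boundLatticePol} together with the known value $\nd(\overline{Y})=4$ from \cite[Theorem~1.1~(1)]{MRS24Paper}. One imprecision: the lattice ``determined'' by all ten curve classes is \emph{not} an ADE lattice --- the octagon class $f_1=F_1+\cdots+F_8$ is isotropic, so the span of the ten classes is not negative definite. The paper instead selects the eight curves corresponding to the filled vertices of Figure~\ref{fig:BarthPetersNum} (the chain $F_2,\ldots,F_8$ with $N$ attached at $F_4$), which form an $E_8$ diagram; primitivity of this $E_8$ in $S_Y$ is automatic from unimodularity, giving $(\tau,\overline{\tau})=(E_8,E_8)$. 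The ``main obstacle'' you flag --- getting the embedding $M_R\hookrightarrow S_X$ and not merely a sublattice of $S_Y$ --- is resolved in the paper exactly as you suggest: one writes $\pi^*w_i=w_i^++w_i^-$ and arranges signs so that $w_1^+,\ldots,w_8^+$ again form an $E_8$ root basis, then sends $(v,0)\mapsto\pi^*v$ and $(w_i,\pm w_i)/2\mapsto w_i^\pm$; note that Definition~\ref{def:ttbar_surface} only requires an isometric embedding on the $M_R$ side, not an isomorphism, which is why this works for every surface containing the configuration.

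Your lower--bound construction, however, has a genuine gap. The only connected affine ADE subdiagrams of configuration (a) are: the $\widetilde{A}_7$ octagon, two $\widetilde{D}_8$'s (fork $\{F_3,N\}$ or $\{F_5,N\}$ at $F_4$ and $\{F_1,L_1\}$ or $\{F_7,L_1\}$ at $F_8$), two $\widetilde{E}_7$'s (centered at $F_4$ or $F_8$), and several $\widetilde{E}_8$'s. Since $f_1\cdot F_i=0$ for all $i$ while $f_1\cdot N=f_1\cdot L_1=1$, the intersection of $f_1$ with any class $e=\sum a_iF_i+bN+cL_1$ equals $b+c$, i.e.\ the sum of the coefficients of the two legs. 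Running through the list: the $\widetilde{D}_8$'s have $b=c=1$, the $\widetilde{E}_7$'s have leg coefficient $2$, and the $\widetilde{E}_8$'s have leg coefficients $3$ and $1$; so $f_1\cdot e\in\{2,4\}$ in every case, never $1$. Hence there do not exist even \emph{two} half-fibers visibly supported on the ten given curves with intersection $1$, and your proposed $(f_1,f_2,f_3,f_4)$ cannot be built the way you describe. A sequence of length $4$ can be salvaged only by using classes $e/2$ for configurations occurring as \emph{non-multiple} fibers of their fibrations (whether a given configuration is a double fiber or a full fiber is geometric information not readable from the dual graph, and the half-fiber of such a fibration is supported elsewhere, so nefness is not ``automatic'' in your sense). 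The paper avoids this entirely by citing the known non-degenerate isotropic sequence of length $4$ from \cite[Remark~4.3]{MMV22_NonDeg3}.
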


\begin{proof}
First, we claim that $Y$ is an $(E_8,E_8)$ Enriques surface. Assuming this, let $\overline{Y}$ be any $(E_8,E_8)$-generic Enriques surface. Then, by Theorem~\ref{thm:boundLatticePol} we have that $\nd(Y)\leq\nd(\overline{Y})$. On the other hand, $\nd(\overline{Y})=4$ by \cite[Theorem~1.1~(1)]{MRS24Paper}, so we obtain the upper bound $\nd(Y)\leq4$. On the other hand, $Y$ admits a non-degenerate isotropic sequence of length $4$ (see for example \cite[Remark~4.3]{MMV22_NonDeg3}), which implies $\nd(Y)\geq4$.

We now prove that $Y$ is an $(E_8,E_8)$ Enriques surface. 
Let us consider the ADE-lattice $L\subseteq S_Y$ of ADE-type $E_8$ generated by the smooth rational curves corresponding to the filled-in vertices in Figure~\ref{fig:BarthPetersNum}. The lattice $L$ is primitive in $S_Y$ because it is unimodular, and hence $\overline{L}=E_8$. Let us fix an isometry $S_Y\cong L_{10}$ and denote by $R\subseteq L_{10}$ the image of $L$. Define $M_R$ as in Definition~\ref{def:tautaubar-generic-Enriques-surfaces}.

Let $\pi\colon X\rightarrow Y$ be the K3 cover. If $w_1,\ldots,w_8$ denote the elements of the chosen $E_8$ root basis of $L$, we have that $\pi^*w_i=w_i^++w_i^-$, and we can arrange the signs ``$\pm$'' so that $w_1^+,\ldots,w_8^+$ also form an $E_8$ root basis. Then, we can define an isometric embedding
\[
M_R \hookrightarrow \Pic(X)
\]
by sending $(v,0)$ with $v\in L_{10}(2)$ to $\pi^*v$, and every $(w_i, \pm w_i)/2$ to $w_i^\pm$. It follows from this construction that $Y$ is an $(E_8,E_8)$ Enriques surface.
\end{proof}

\begin{corollary}
\label{cor:ndmlp1}
Let $Y$ be an Enriques surface containing a configuration of smooth rational curves with dual graph as in \eqref{fig:configurations-smooth-rational-curves-BP-MLP1} (b), then $\nd(Y)=8$.
\end{corollary}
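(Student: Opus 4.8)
The plan is to follow the blueprint of the proof of Corollary~\ref{cor:ndBP} essentially verbatim, replacing the unimodular lattice $E_8$ used there by a suitable \emph{non-unimodular} ADE sublattice extracted from the configuration in \eqref{fig:configurations-smooth-rational-curves-BP-MLP1}~(b). The key structural observation is that \eqref{fig:configurations-smooth-rational-curves-BP-MLP1}~(b) consists of four trivalent hubs arranged in a $4$-cycle, each pair of adjacent hubs being joined by two length-$2$ paths through the eight degree-$2$ vertices (the eight pairwise disjoint $(-2)$-curves of the family of \cite[Example~1]{MLP02}). This graph is full of short cycles, so it is \emph{not} a Dynkin diagram; however, deleting four of the twelve vertices so as to break all cycles leaves an induced tree of type $D_8$ (a fork at one hub with two short legs and a length-$5$ chain running through two further hubs). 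I would take $R\subseteq S_Y$ to be the negative definite root lattice spanned by this $D_8$, fix an isometry $S_Y\cong L_{10}$, and compute the primitive closure $\overline R$; the resulting pair $(\tau(R),\tau(\overline R))$ is then the entry of \cite[Table~1]{BS22} whose $(\tau,\overline\tau)$-generic member satisfies $\nd(\overline Y)=8$, i.e.\ one of the cases $i\in\{143,144,171\}$ in Theorem~\ref{thm:TauBarTauFinal}.

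Granting the claim that $Y$ is a $\ttbar$ Enriques surface for this pair, the upper bound is immediate: Theorem~\ref{thm:boundLatticePol} gives $\nd(Y)\le\nd(\overline Y)$ for any $\ttbar$-generic $\overline Y$, and Theorem~\ref{thm:TauBarTauFinal} yields $\nd(\overline Y)=8$, whence $\nd(Y)\le 8$. For the reverse inequality I would exhibit a non-degenerate isotropic sequence of length $8$ directly on $Y$, assembled from the half-fibers supported on the configuration (the primitive isotropic cycles such as $R_1+R_2+R_3+R_4$, together with the eight disjoint $(-2)$-curves); such sequences are already available for this family in \cite{MLP02,MRS24Paper}, giving $\nd(Y)\ge 8$ and hence equality.

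The technical heart is proving the claim, i.e.\ constructing the embedding $M_R\hookrightarrow \Pic(X)$ of Definition~\ref{def:ttbar_surface}. Forming $M_R$ as in Definition~\ref{def:tautaubar-generic-Enriques-surfaces} and letting $\pi\colon X\to Y$ be the K3 cover, each root $w_i$ of the chosen $D_8$ basis pulls back as $\pi^*w_i=w_i^{+}+w_i^{-}$, the two classes being exchanged by the deck involution. One chooses the signs so that the $w_i^{+}$ reproduce the $D_8$ dual graph in $\Pic(X)$: adjacent curves meet in one point downstairs, so after a choice of labelling $w_i^{+}\cdot w_j^{+}=1$ and $w_i^{+}\cdot w_j^{-}=0$, and since $D_8$ is a \emph{tree} this labelling propagates consistently with no monodromy obstruction. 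Sending $(v,0)\mapsto\pi^*v$ and $(w_i,\pm w_i)/2\mapsto w_i^{\pm}$ then defines an isometric embedding $M_R\hookrightarrow \Pic(X)$ making the relevant diagram commute, so that $Y$ is a $\ttbar$ Enriques surface.

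The two genuinely new difficulties compared with the $E_8$ case of Corollary~\ref{cor:ndBP} are where I expect the main obstacle to lie. First, identifying the primitive closure $\overline R$: because $R=D_8$ is not unimodular, one cannot conclude $\overline R=R$ for free as with $E_8$, and must instead pin down the $O^{\mathcal{P}}(L_{10})$-orbit of $R$ (in the sense of \cite{Shi21}) to match the correct Brandhorst--Shimada entry; this is the real lattice-theoretic content. Second, the consistency of the sign choices defining the $w_i^{+}$ is exactly the reason one must extract a tree subconfiguration of type $D_8$ rather than work with the full cyclic graph of \eqref{fig:configurations-smooth-rational-curves-BP-MLP1}~(b), where propagating signs around a $4$-cycle of curves could fail. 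Checking that the length-$8$ sequence produced for the lower bound is nef against every $(-2)$-curve of $Y$ is routine but should be verified.
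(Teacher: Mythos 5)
Your proposal follows the paper's proof essentially step for step: extract the $D_8$ tree from the filled-in vertices of the configuration, lift it to the K3 cover by choosing signs $w_i^{\pm}$ along the tree to build the embedding $M_R\hookrightarrow\Pic(X)$ exactly as in Corollary~\ref{cor:ndBP}, apply Theorem~\ref{thm:boundLatticePol} together with Theorem~\ref{thm:ndTauBarTau} for the upper bound, and quote the known lower bound (the paper cites \cite[Proposition~6.2]{MRS22Paper}). Your remarks about why one must pass to a tree subconfiguration before propagating the signs, and about the pullback $\pi^*w_i=w_i^++w_i^-$, match the paper's construction.

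The one step you leave open --- identifying the primitive closure $\overline{R}$ --- is precisely what the paper supplies in Lemma~\ref{lem:primitive-closure-D8-lattice}: there it is shown, using the explicit basis of $S_Y$ from \cite[Proposition~6.1]{MRS22Paper} and a Smith normal form computation of the discriminant group of $D_8$, that $v=\frac{1}{2}(e_1+e_4+e_6+e_8)$ lies in $S_Y$ but not in $L$, so $\overline{L}$ is the index-$2$ even overlattice $E_8$ and $Y$ is a $(D_8,E_8)$ Enriques surface (entry $144$). Note that this is a genuinely geometric fact about the configuration, not deducible from the abstract isometry class of $R$ alone, since $D_8$ embeds into $L_{10}$ with both closures. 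That said, your hedge is salvageable without the computation, though you should make the argument explicit rather than gesture at it: the only even overlattices of $D_8$ are $D_8$ itself and $E_8$, so the pair $(\tau,\overline{\tau})$ must be $(D_8,D_8)$ or $(D_8,E_8)$ --- entries $143$ and $144$, both with $\nd=8$ by Theorem~\ref{thm:ndTauBarTau} --- and the upper bound $\nd(Y)\leq 8$ follows in either case. (Your inclusion of case $171$ is off, since there $\tau=D_8+A_1\neq D_8$; and the hubs in configuration (b) are $4$-valent, not trivalent, though neither point affects the argument.) So: same route as the paper, with its key lemma replaced by a case analysis that works but that you should state precisely instead of flagging as an unresolved obstacle.
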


\begin{proof}
In this case, we have that $Y$ is a $(D_8,E_8)$ surface.  
Define $L$ to be the root lattice of ADE-type generated by the $D_8$ configuration corresponding to the filled-in vertices of Figure~\ref{fig:dual-graph-MLP1}. By Lemma~\ref{lem:primitive-closure-D8-lattice}, the primitive closure of $L$ is a copy of the $E_8$ lattice. Thus, via a construction analogous to the one carried out in the proof of Corollary~\ref{cor:ndBP}, we have that $Y$ is a $(D_8,E_8)$ Enriques surface. Then, by Theorem~\ref{thm:boundLatticePol}, $\nd(Y)\leq \nd(\overline{Y})$, where $\overline{Y}$ is any $(D_8,E_8)$-generic Enriques surface. We have $\nd(\overline{Y})=8$ by Theorem~\ref{thm:ndTauBarTau}, which will be proved in the next section. On the other hand, $\nd(Y)\geq 8$ by \cite[Proposition~6.2]{MRS22Paper}, hence $\nd(Y)=8$. 
\end{proof}

\begin{lemma}
\label{lem:primitive-closure-D8-lattice}
We retain the notation of the proof of Corollary~\ref{cor:ndmlp1}. The primitive closure of $L$ in $S_Y$ is isometric to $E_8$.
\end{lemma}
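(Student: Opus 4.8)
The claim is that the root lattice $L\subseteq S_Y$ of ADE-type $D_8$, generated by the filled-in vertices of the dual graph in \eqref{fig:configurations-smooth-rational-curves-BP-MLP1}~(b), has primitive closure $\overline{L}$ in $S_Y\cong L_{10}$ isometric to $E_8$. The strategy I would follow is purely lattice-theoretic. A $D_8$ root lattice has discriminant group $\bZ/4\bZ$ (discriminant $4$) and is \emph{not} primitive in a unimodular overlattice of rank $10$: its orthogonal complement and the glue vectors force a proper overlattice. The natural candidate for the primitive closure is the unique even overlattice of $D_8$ of index $2$ that is itself a root lattice, namely $E_8$. So the whole point is to show that the isotropic subgroup of the discriminant group $A_{D_8}=\bZ/4\bZ$ by which we enlarge $L$ to get $\overline{L}$ is the order-$2$ subgroup corresponding to the $E_8$ overlattice, rather than leaving $L$ primitive.

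\emph{First}, I would recall the standard fact that $\overline{L}/L$ is an isotropic subgroup of the discriminant group $(A_L,q_L)$, where $q_L$ is the discriminant quadratic form; this is Nikulin's description of overlattices. For $D_8$ one has $A_{D_8}\cong\bZ/4\bZ$ with its standard generator of square $q=8/4=2\bmod 2\bZ$ on the generator, but crucially the element of order $2$ (twice the generator) has self-square $q\equiv 0$, hence is isotropic. This isotropic order-$2$ subgroup $H\subset A_{D_8}$ yields exactly the overlattice $E_8$, since $E_8$ is the unique even unimodular even overlattice of $D_8$ of index $2$. \emph{Second}, since $E_8$ is unimodular it is automatically primitive in any lattice containing it, so if I can show $\overline{L}\supseteq E_8$ then $\overline{L}=E_8$ and we are done.

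\emph{The substantive step}, and the main obstacle, is to verify that the primitive closure really picks up this order-$2$ overlattice rather than staying equal to $L$. Abstractly both possibilities ($\overline{L}=D_8$ or $\overline{L}=E_8$) are consistent with $A_L$; the resolution depends on the concrete embedding $L\hookrightarrow L_{10}$ coming from the specific curve configuration. Here I would exploit the embedding into $L_{10}=U\oplus E_8$ (equivalently the $E_{10}$ diagram of Figure~\ref{fig:E10}). Concretely, I would write down explicit classes in $S_Y$, as integer combinations of the $e_i$ basis of $L_{10}$, realizing the eight $D_8$ roots from the dual graph, and then exhibit a vector $x\in L_{10}$ with $2x\in L$ but $x\notin L$ (i.e.\ a glue vector), thereby witnessing that $L$ is \emph{not} primitive and that $\langle L,x\rangle$ is an index-$2$ even overlattice. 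Computing $x^2$ and the intersection numbers $x\cdot w_i$ then identifies $\langle L,x\rangle$ as $E_8$ via its root system (e.g.\ by checking the enlarged set of $(-2)$-vectors forms an $E_8$ root system, or equivalently by matching discriminants: the enlarged lattice has discriminant $4/2^2=1$, forcing unimodularity, hence $E_8$).

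\emph{Finishing.} Once the index-$2$ overlattice is produced and shown unimodular of rank $8$ and negative definite, Milnor's classification forces it to be $E_8$, and its unimodularity makes it primitive in $S_Y\cong L_{10}$, so it equals $\overline{L}$. I expect the only genuinely delicate bookkeeping to be the explicit identification of the glue vector $x$ from the geometry of the configuration in \eqref{fig:configurations-smooth-rational-curves-BP-MLP1}~(b) and the verification of its intersection pairings against the $D_8$ roots; everything else is the routine machinery of discriminant forms and overlattices. Alternatively, if a clean glue vector is awkward to extract from the diagram, I would instead argue by exclusion: $\overline{L}$ is an even negative-definite rank-$8$ overlattice of $D_8$, hence is either $D_8$ itself or $E_8$; but $D_8$ is not primitively embeddable in the relevant way compatibly with the remaining $(-2)$-curves of the configuration, leaving $E_8$ as the only option.
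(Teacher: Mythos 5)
Your overall strategy---exhibit a half-integral glue vector, observe that the resulting index-$2$ overlattice is even, negative definite, unimodular of rank $8$, invoke the classification to identify it with $E_8$, and use unimodularity to conclude it is primitive and hence equals $\overline{L}$---is exactly the paper's route. But two of your supporting claims are false, and the one step you defer is the entire content of the lemma. First, $A_{D_8}\cong(\bZ/2\bZ)^2$, not $\bZ/4\bZ$: the discriminant group of $D_n$ is cyclic of order $4$ only for $n$ odd. (The paper's Smith normal form computation returns $\mathrm{diag}\big(1,\ldots,1,\frac{1}{2},\frac{1}{2}\big)$, confirming $(\bZ/2\bZ)^2$; the isotropic elements are the two half-spin classes, each giving an overlattice isometric to $E_8$, so ``unique'' is also only true up to isometry.) Second, and more seriously, your opening assertion that a $D_8$ sublattice ``is \emph{not} primitive in a unimodular overlattice of rank $10$'' is wrong: the discriminant form of $D_8$ is isomorphic to that of $U(2)$ (both are the hyperbolic $2$-elementary form of rank two, with values $\{0,0,1\}$ modulo $2\bZ$ on the nonzero elements), so by Nikulin's gluing construction $D_8$ admits a \emph{primitive} embedding into $L_{10}\cong U\oplus E_8$ with orthogonal complement $U(2)$. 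Consequently your fallback ``argument by exclusion'' in the last paragraph cannot be made to work either: abstract lattice theory does not rule out $\overline{L}\cong D_8$, and indeed you concede this yourself mid-proof (``abstractly both possibilities \ldots{} are consistent''), in direct contradiction with your opening claim.

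Because primitivity genuinely depends on the specific embedding, the lemma is decided only by geometric input, namely the explicit classes of the eight curves in $S_Y$. This is precisely the step you describe (``I would write down explicit classes \ldots{} and then exhibit a vector $x$'') but never execute, and it is the whole proof. The paper carries it out: with the labeling of Figure~\ref{fig:dual-graph-MLP1}, the candidate glue vector is $v=\frac{1}{2}(e_1+e_4+e_6+e_8)$, and the nontrivial verification that $v\in S_Y$ is done by checking that $v$ has integral intersection with every element of the explicit basis of $S_Y$ given in \cite[Proposition~6.1]{MRS22Paper}; that $2v\in L$ is immediate, and $v\notin L$ is certified by identifying $v+L$ with a nonzero element of $A_L$ via the Smith normal form of the inverse Gram matrix. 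From there the finish is as you outline. To repair your write-up you would need: (i) the corrected discriminant group $(\bZ/2\bZ)^2$; (ii) deletion of the ``forced non-primitivity'' and exclusion claims; and (iii) the actual glue vector together with the integrality check against a basis of $S_Y$---without (iii) there is no proof.
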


\begin{proof}
The vector $v:=\frac{1}{2}(e_1+e_4+e_6+e_8)$ (see Figure~\ref{fig:dual-graph-MLP1} for the labeling) is an element of $S_Y$ because it intersects every element of the basis of $S_Y$ in \cite[Proposition~6.1]{MRS22Paper} giving an integer. Moreover, $v\notin L$ and $2v\in L$: the latter is clear by definition of $L$, so let us explain the former.

Let $B$ be the Gram matrix of $L$ associated to the $\mathbb{Z}$-basis $(e_1,\ldots,e_8)$. The dual lattice $L^*$ is generated over $\mathbb{Z}$ by the rows of $B^{-1}$ and a set of generators for the discriminant group $A_L$ can be obtained by computing the Smith normal form of $B^{-1}$. Using the \texttt{SageMath} function \texttt{smith\_form()}, we obtain $M_1,M_2 \in\mathrm{SL}_8(\mathbb{Z})$ satisfying $M_1B^{-1}M_2=\mathrm{diag}\big(1,\ldots,1,\frac{1}{2},\frac{1}{2}\big)$. This not only illustrates that $A_L\cong(\mathbb{Z}/2\mathbb{Z})^2$, but also the rows of $M_1B^{-1}$ give an
alternative $\mathbb{Z}$-basis of $L^*$.

In particular, the last row vector of $M_1B^{-1}$ gives the following nonzero element of the discriminant group $A_L$:
\[
-\frac{1}{2}e_1-\frac{1}{2}e_4-\frac{1}{2}e_6+\frac{1}{2}e_8+L=v+L.
\]
Hence, $v\notin L$.

The above considerations imply that the primitive closure $\overline{L}\subseteq\mathrm{Num}(S_1)$ of $L$ contains $\langle L,v\rangle$. A the same time, $\langle L,v\rangle$ is an index $2$ overlattice of $L\cong D_8$. So, $\langle L,v\rangle$ is a negative definite unimodular lattice of rank $8$. These facts imply that $\overline{L}=\langle L,v\rangle$ and that $\overline{L}\cong E_8$.
\end{proof}

\begin{remark}\label{rmk:description}
    It is shown in \cite{MMV_ZeroEntropy} that Enriques surfaces containing a configuration of smooth rational curves as in \eqref{fig:configurations-smooth-rational-curves-BP-MLP1}~(a) are precisely the examples of Barth--Peters in \cite[Section~4]{BP83}, and the only Enriques surfaces of zero entropy in characteristic zero. Enriques surfaces containing a configuration as in \eqref{fig:configurations-smooth-rational-curves-BP-MLP1}~(b) instead are precisely those with eight disjoint smooth rational curves appearing in \cite[Example~1]{MLP02}. These two classes of surfaces are the only ones admitting cohomologically trivial automorphisms (see \cite{MN84}).
\end{remark}

\begin{remark}
\label{rmk:Enriques-surfaces-MLP2-nd}
We cannot apply the same strategy to compute the non-degeneracy invariant for surfaces of \cite[Example~2]{MLP02}. If $Y$ is an Enriques surface of this family, then $\nd(Y)\geq5$ by \cite[Proposition~6.5]{MRS22Paper}. On the other hand, we can prove that a very general such $Y$ is $(2D_4,D_8)$-generic, and hence it satisfies $\nd(Y)=10$ by \cite[Theorem~1.1]{MRS24Paper} (this is case $141$ in Brandhorst--Shimada's nomenclature). This is not enough to deduce the value of the non-degeneracy invariant for all the Enriques surfaces in the family.
\end{remark}

\begin{remark}
Arguing as in the proofs of Corollaries~\ref{cor:ndBP} and \ref{cor:ndmlp1}, one can show that
Enriques surfaces with finite automorphism group of type II are $(D_9,D_9)$ Enriques surface, and that Enriques surfaces in the Apery--Fermi family \cite{FV22} are $(E_7+A_2,E_7+A_2)$ Enriques surfaces.
\end{remark}

\begin{figure}
\noindent
\minipage{0.5\textwidth}
\centering
\begin{tikzpicture}[scale=0.65]

\node (F1) at (-135:2) [wdot, label=left:{}]{};
\node (F2) at (-90:2) [dot, label=below:{}]{};
\node (F3) at (-45:2) [dot, label=right:{}]{};
\node (F4) at (0:2) [dot, label=right:{}]{};
\node (F5) at (45:2) [dot, label=right:{}]{};
\node (F6) at (90:2) [dot, label=above:{}]{};
\node (F7) at (135:2) [dot, label=left:{}]{};
\node (F8) at (180:2) [dot, label=left:{}]{};
\node (L1) at (180:1) [wdot, label=above:{}]{};
\node (N) at (0:1) [dot, label=above:{}]{};

\draw[fiber] (F2)--(F3)--(F4)--(F5)--(F6)--(F7)--(F8) (F4)--(N);
\draw[]  (F8)--(L1) (F2)--(F1)--(F8);

\end{tikzpicture}
\caption{
An $E_8$ configuration.
}
\label{fig:BarthPetersNum}
\endminipage\hfill
\minipage{0.50\textwidth}
\centering
\begin{tikzpicture}[scale=0.65]

\node (1) at (90:2) [wdot]{};
\node (2) at (45:2) [dot,label=right:{$e_1$}]{};
\node (3) at (45:1) [dot,label=below:{$e_2$}]{};
\node (4) at (0:2) [dot,label=right:{$e_3$}]{};
\node (5) at (-45:2) [dot,label=right:{$e_4$}]{};
\node (6) at (-45:1) [wdot]{};
\node (7) at (-90:2) [dot,label=below:{$e_5$}]{};
\node (8) at (-135:2) [dot,label=left:{$e_6$}]{};
\node (9) at (-135:1) [wdot]{};
\node (10) at (180:2) [dot,label=left:{$e_7$}]{};
\node (11) at (135:2) [dot,label=left:{$e_8$}]{};
\node (12) at (135:1) [wdot]{};

\draw[fiber] (2)--(4)--(3) (4)--(5)--(7)--(8)--(10)--(11);
\draw[]  (3)--(1)--(2) (4)--(6)--(7)--(9)--(10)--(12)--(1)--(11);

\end{tikzpicture}
\caption{
A $D_8$ configuration.}
\label{fig:dual-graph-MLP1}
\endminipage
\end{figure}

\section{The non-degeneracy invariant of the \texorpdfstring{$(\tau,\overline{\tau})$}{(tau,bartau)}-generic Enriques surfaces}\label{sec:ttbarND}

In the current section we compute the Fano and Mukai non-degeneracy invariants for the class of $(\tau,\overline{\tau})$-generic Enriques surfaces, introduced in \cite{BS22} and defined in Section~\ref{ssec:ttbarDef}. This concludes the investigation started in \cite{MRS24Paper}. The main result of this section is Theorem~\ref{thm:ndTauBarTau}.

\subsection{Summary of Brandhorst and Shimada's results}
\label{BSConstruction}

We recall here some of the results of \cite{BS_Borcherds} and \cite{BS22} related to the $\ttbar$-generic Enriques surfaces. The Borcherd method for a K3 surface $X$ consists of embedding the lattice $S_X$ into $L_{26}$, the unique even unimodular hyperbolic lattice of rank $26$, and then studying $S_X$ through the action of $O(L_{26})$. In \cite{BS_Borcherds}, the authors extended this method to Enriques surfaces: first, Brandhorst and Shimada classified all primitive embeddings of $L_{10}(2)$ in $L_{26}$ up to the actions of $O(L_{26})$ and $O(L_{10}(2))$ (henceforth identified with $O(L_{10})$). The possible labels for the embeddings $\iota\colon L_{10}(2) \to L_{26}$ are listed in the column \texttt{name} in \cite[Table~1.1]{BS_Borcherds}.

Then, fixed $\iota$ not labeled \texttt{infty}, they define a so-called \textit{induced chamber} $D\subseteq L_{10}\otimes\mathbb{R}$ \cite[Definition~2.4]{BS22}. Every induced chamber $D$ has finitely many walls which are defined by $(-2)$-vectors of $L_{10}$. In \cite{BS_Borcherdsdata} the walls are listed explicitly for a particular choice of induced chamber denoted by $D_0$.

As a result, we have the following data for each $(\tau,\overline{\tau})$-generic Enriques surface $Y$:
\begin{itemize}
\item a primitive embedding $S_Y(2)\xrightarrow{\lambda_Y} L_{10}(2) \xrightarrow{\iota} L_{26}$,
\item a finite list of walls of a chamber $D_0 \subseteq \lambda_Y(\Nef(Y))$ (depending only on $\iota$), and
\item a finite list $\{g_0=\mathrm{id},g_1,\ldots,g_M\}$ of isometries $g_j \in O(L_{10})$, $j=0,\ldots,M$,
\end{itemize}
such that $F_0\coloneqq \lambda_Y^{-1}\left(\bigcup_{j=0}^Mg_j(D_0)\right)$ contains a fundamental domain for the action of $\aut(Y)$ on $\Nef(Y)$. 
For each $Y$ as above, \cite{BS22} also provides generators for the group $\aut(Y)$, as well as representatives for the orbits of $\aut(Y)$ acting on the sets of smooth rational curves and of elliptic fibrations on $Y$. This is done via the Borcherd method, see \cite[Section~4]{BS22} for details. In particular, for every $Y$ we have the following data:
\begin{itemize}
    \item a finite set $\sR_{\mathrm{temp}}$ of smooth rational curves;
    \item a finite set $\sE_{\mathrm{temp}}$ of elliptic fibrations;
    \item a finite subset $\sH\subseteq \aut(Y)$.
\end{itemize}
These lists are specified in the records \texttt{Rats.Ratstemp}, \texttt{Ells.Ellstemp}, and \texttt{Autrec.HHH} of \cite{SB20data}, see \cite[Example~3.5]{MRS24Paper}.
The label of $\iota$ is specified in the entry \texttt{irec} of \cite[Table~1.1]{BS22}.

\subsection{Enriques surfaces with finite automorphism group: primitive embeddings}
\label{subs:FiniteAuto}
Enriques surfaces with finite automorphism group were classified by Kond\=o in \cite{Kon86}. These come into seven families, labeled by $\mathrm{I},\mathrm{II},\ldots,\mathrm{VII}$, which we will refer to as \emph{Kond\=o types}. If $Z$ is one of such surfaces, by \cite[Table~1.2]{BS_Borcherds} there exists a suitable marking $\lambda_Z\colon S_Z\cong L_{10}$ and a primitive embedding $\iota\colon L_{10}(2) \hookrightarrow L_{26}$ such that $\lambda_Z(\Nef(Z))=D_0$. 
The correspondence between the Kond\=o and embedding types is as follows:
\begin{equation}
\label{table:Irec_Kondo}
\begin{tabular}{c||c|c|c|c|c|c|c}
Kond\=o type & $\mathrm{I}$ & $\mathrm{II}$ & $\mathrm{III}$ & $\mathrm{IV}$ & $\mathrm{V}$ & $\mathrm{VI}$ & $\mathrm{VII}$\\
\hline
Embedding type & \texttt{12A}  & \texttt{12B}  &\texttt{20B}  &\texttt{20F}  & \texttt{20A} & \texttt{20E} & \texttt{20C},~\texttt{20D}
\end{tabular}
\end{equation}
(As already clear from the table, the Enriques surface with finite automorphism group of type $\mathrm{VII}$ corresponds to two possible embedding types with isomorphic induced chambers \cite[Table~1.2]{BS_Borcherds}.) Recall that for an Enriques surface $Y$ we write $\sfF_Y$ and $\sfM_Y$ for the sets of all its numerical Fano (respectively, Mukai) polarizations (see Definition~\ref{def:set-of-polarizations}).
If $Y$ is $(\tau,\overline{\tau})$-generic, we denote by $\sfF_{Y}^0$ (resp. $\sfM_{Y}^0$) the set of all numerical Fano (resp. Mukai) polarizations contained in $\lambda_Y^{-1}(D_0)\subseteq S_Y$. Namely, $\sfF_Y^0:=\sfF_Y\cap\lambda_Y^{-1}(D_0)$ and $\sfM_Y^0:=\sfM_Y\cap\lambda_Y^{-1}(D_0)$.

\begin{lemma}\label{lem_FKondoF0}
Let $Y$ be a $(\tau,\overline{\tau})$-generic Enriques surface whose embedding type appears in \eqref{table:Irec_Kondo}. Let $Z$ be an Enriques surface with finite automorphism group with the same embedding type. Then, the isometry $\lambda:=\lambda_Z^{-1}\circ\lambda_Y\colon S_Y\rightarrow L_{10}\rightarrow S_Z$ satisfies $\lambda(\sfF_Y^0)=\sfF_Z$ and $\lambda(\sfM_Y^0)=\sfM_Z$.
\end{lemma}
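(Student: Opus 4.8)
The plan is to reduce everything to the fact that $\lambda=\lambda_Z^{-1}\circ\lambda_Y$ is an isometry of lattices, together with the two chamber identities coming from the Brandhorst--Shimada setup. A numerical Fano polarization is characterized by three conditions: nefness, $h^2=10$, and $\Phi(h)=3$ (and likewise a numerical Mukai polarization by nefness, $v^2=18$, $\Phi(v)=4$). Since $\lambda$ is an isometry it preserves self-intersection, and since the function $\Phi$ is defined purely lattice-theoretically (a minimum of $|x\cdot f|$ over isotropic $f\in S_Y\cong L_{10}\cong S_Z$), it satisfies $\Phi(\lambda(x))=\Phi(x)$ and $\lambda(x)^2=x^2$ for every $x\in S_Y$. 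Thus the two purely numerical conditions are transported by $\lambda$ in both directions, and the entire content of the lemma is the matching of the nefness conditions.

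For this I would invoke the chamber data recalled in Section~\ref{subs:FiniteAuto}. Because $Y$ and $Z$ have the same embedding type $\iota$, they determine the very same induced chamber $D_0\subseteq L_{10}\otimes\mathbb{R}$. For the Kond\=o surface $Z$ one has the equality $\lambda_Z(\Nef(Z))=D_0$, i.e.\ $\Nef(Z)=\lambda_Z^{-1}(D_0)$, whereas for the $\ttbar$-generic surface $Y$ one only has the containment $D_0\subseteq\lambda_Y(\Nef(Y))$, i.e.\ $\lambda_Y^{-1}(D_0)\subseteq\Nef(Y)$. Using $\lambda_Z\circ\lambda=\lambda_Y$, the isometry $\lambda$ carries the subset $\lambda_Y^{-1}(D_0)$ of $S_Y$ bijectively onto $\lambda_Z^{-1}(D_0)=\Nef(Z)$; moreover, by the containment just recalled, every element of $\lambda_Y^{-1}(D_0)$ is automatically nef on $Y$.

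The two inclusions then follow formally. If $h\in\sfF_Y^0=\sfF_Y\cap\lambda_Y^{-1}(D_0)$, then $\lambda(h)\in\lambda_Z^{-1}(D_0)=\Nef(Z)$ is nef, while $\lambda(h)^2=10$ and $\Phi(\lambda(h))=3$; hence $\lambda(h)\in\sfF_Z$. Conversely, any $h'\in\sfF_Z$ is nef and therefore lies in $\Nef(Z)=\lambda_Z^{-1}(D_0)$, so $\lambda^{-1}(h')\in\lambda_Y^{-1}(D_0)\subseteq\Nef(Y)$ is nef with $(\lambda^{-1}(h'))^2=10$ and $\Phi(\lambda^{-1}(h'))=3$, giving $\lambda^{-1}(h')\in\sfF_Y^0$. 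This yields $\lambda(\sfF_Y^0)=\sfF_Z$, and the identity $\lambda(\sfM_Y^0)=\sfM_Z$ follows by the identical argument with the pair $(10,3)$ replaced by $(18,4)$.

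The main point requiring care is the asymmetry in how nefness enters on the two sides: on $Z$ nefness \emph{is} the chamber condition (an equality of cones), whereas on $Y$ the chamber $\lambda_Y^{-1}(D_0)$ is in general a proper subcone of $\Nef(Y)$. The argument therefore relies crucially on $D_0$ being literally the same chamber for $Y$ and $Z$ (guaranteed by the equal embedding type) and on the containment $\lambda_Y^{-1}(D_0)\subseteq\Nef(Y)$, which together force chamber membership to imply nefness on $Y$ as well. Beyond this, the proof uses only the isometry-invariance of $\Phi$ and of the quadratic form.
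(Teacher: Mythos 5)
Your proposal is correct and follows essentially the same route as the paper's proof: both reduce the statement to the isometry-invariance of the quadratic form and of $\Phi$, combined with the chamber facts $\lambda_Z(\Nef(Z))=D_0$ and $\lambda_Y^{-1}(D_0)\subseteq\Nef(Y)$ for the common embedding type. Your writeup is in fact slightly more explicit than the paper's about the asymmetry between the cone equality on $Z$ and the mere inclusion on $Y$, which the paper's terser argument leaves implicit.
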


\begin{proof}
Let $h \in \sfF_Y^0=\sfF_Y\cap\lambda_Y^{-1}(D_0)$. Then, $h'\coloneqq \lambda(h) =\lambda_Z^{-1}\circ\lambda_Y(h)\in \Nef(Z)$. Moreover, we have that $(h')^2 = h^2 = 10$ and $\Phi(h')=\Phi(h)=3$ because $\lambda$ is an isometry. This implies that $h'\in\sfF_Z$. Conversely, if $h'\in\sfF_Z$, then $\lambda^{-1}(h')=\lambda_Y^{-1}\circ\lambda_Z(h')\in\lambda_Y^{-1}(D_0)$. Finally, $\lambda^{-1}(h')\in\sfF_Y$ by an argument analogous to the one above. Hence, $\lambda^{-1}(h')\in\sfF_Y^0$. A similar proof holds for $\lambda(\sfM_Y^0)=\sfM_Z$.
\end{proof}

\subsection{Main result}
\label{sec:mainttb}

We now compute $\nd$, $\Fnd$, and $\Mnd$ for the $\ttbar$-generic Enriques surfaces.

\begin{theorem}
\label{thm:ndTauBarTau}
Let $Y_i$ denote the $i$-th $\ttbar$-generic Enriques surface of \cite[Table~1]{BS22}. 
Then $\nd(Y_i)=\Fnd(Y_{i})=10$ and $\Mnd(Y_i)=9$ except for the cases below, in which the three invariants coincide:
\begin{equation}\label{eq:14cases}
\setlength{\arraycolsep}{5pt}
\begin{aligned}
&\begin{array}{cccc}
\text{no.} & \text{irec} & (\tau,\overline{\tau}) & \nd \\ \hline
145 & 12\text{A} & (E_8,E_8)                    & 4 \\
172 & 12\text{A} & (E_8+A_1,E_8+A_1)            & 4 \\ \hline
85  & 20\text{A} & (E_7,E_7)                    & 7 \\
122 & 20\text{A} & (E_7+A_1,E_7+A_1)            & 7 \\
123 & 20\text{A} & (E_7+A_1,E_8)                & 7\\
159 & 20\text{A} & (E_7+2A_1,E_8+A_1)           & 7 \\
176 & 20\text{A} & (E_7+A_2,E_7+A_2)            & 7 
\end{array}
&&
\begin{array}{cccc}
\text{no.} & \text{irec} & (\tau,\overline{\tau}) & \nd \\ \hline
143 & 12\text{B} & (D_8,D_8)                    & 8 \\
184 & 12\text{B} & (D_9,D_9)                    & 7 \\ \hline
84  & 20\text{B} & (D_7,D_7)                    & 9 \\
121 & 20\text{B} & (D_7+A_1,D_7+A_1)            & 9 \\
144 & 20\text{B} & (D_8,E_8)                    & 8 \\
158 & 20\text{B} & (D_7+2A_1,D_9)               & 9 \\
171 & 20\text{B} & (D_8+A_1,E_8+A_1)            & 8
\end{array}
\end{aligned}
\end{equation}
(the first column ``\emph{no.}'' refers to the list in \cite{BS22}).
\end{theorem}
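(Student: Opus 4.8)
The plan is to compute, for each of the $155$ surfaces, the two refined invariants $\Fnd(Y_i)$ and $\Mnd(Y_i)$ separately, and then to read off $\nd(Y_i)$ together with the coincidences via Corollary~\ref{cor:max} (which gives $\nd=\max\{\Fnd,\Mnd\}$) and Proposition~\ref{prop:casesNdFndMnd} (which determines how the three relate; in particular they all agree whenever their common value is at most $8$). For each refined invariant the problem splits into a lower bound, obtained by exhibiting an explicit canonical isotropic sequence of the correct length and non-degeneracy -- length $10$ for $\Fnd$, length $9$ in $\One$ for $\Mnd$ -- and an upper bound, supplied by Theorem~\ref{cor_ndUpperBound}, which reformulates it as a statement about how many smooth rational curves are contracted by each polarization in a fundamental domain for $\aut(Y_i)$.

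For the $141$ surfaces outside \eqref{eq:14cases} only lower bounds are required, since $\Fnd\le 10$ and $\Mnd\le 9$ hold on every Enriques surface by the length bound on isotropic sequences. Here $\Fnd(Y_i)=10$ is already witnessed by the non-degenerate length-$10$ sequences produced in \cite{MRS24Paper} to prove $\nd(Y_i)=10$, as such a sequence is canonical of non-degeneracy $10$. For $\Mnd(Y_i)=9$ I would produce, from a non-degenerate sequence $(f_1,\ldots,f_{10})$, the nine classes $g_i:=\frac{1}{3}(f_1+\cdots+f_{10})-f_i-f_{10}$ for $i=1,\ldots,9$; a short computation shows these form an isotropic sequence whose sum is divisible by $2$, hence lies in $\One$ by Proposition~\ref{prop_OrbitsLength9}. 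One then checks that each $g_i$ is nef by intersecting it with the explicit smooth rational curves of the Brandhorst--Shimada data, so that the sequence is canonical of non-degeneracy $9$; if nefness fails for a given starting sequence one varies it. This is a finite verification, parallel to the lower-bound computations of \cite{MRS24Paper}.

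The real content concerns the $14$ surfaces in \eqref{eq:14cases}, where genuine upper bounds $\Fnd(Y_i)\le v$ and $\Mnd(Y_i)\le v$ must be established with $v$ the tabulated value. By Theorem~\ref{cor_ndUpperBound} these amount to checking that every numerical Fano polarization in a set $\sfF_{Y_i}'$ of $\aut(Y_i)$-orbit representatives is orthogonal to at least $10-v$ smooth rational curves, and that every numerical Mukai polarization in $\sfM_{Y_i}'$, with sequence $(g_1,\ldots,g_9)$ and $H=\langle g_1,\ldots,g_9\rangle^\perp$, is orthogonal to at least $9-v$ smooth rational curves that do not lie in $H$. The enumeration of these representatives is made feasible by the fact that each of the $14$ surfaces has embedding type \texttt{12A}, \texttt{12B}, \texttt{20A} or \texttt{20B} (column \emph{irec} of \eqref{eq:14cases}), matching via \eqref{table:Irec_Kondo} an Enriques surface $Z$ with finite automorphism group of Kond\=o type I, II, V or III. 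By Lemma~\ref{lem_FKondoF0} the Fano and Mukai polarizations lying in the induced chamber $\lambda_{Y_i}^{-1}(D_0)$ are carried isometrically onto the complete finite sets $\sfF_Z$ and $\sfM_Z$ determined by exhaustive search in \cite{DataNew}. Applying the finite list of chamber isometries $g_0=\mathrm{id},g_1,\ldots,g_M$ from \cite{BS22} then spreads these over the whole fundamental domain $F_0=\lambda_{Y_i}^{-1}(\bigcup_{j=0}^{M} g_j(D_0))$, producing finite representative sets $\sfF_{Y_i}'$ and $\sfM_{Y_i}'$ (possible redundancy among orbits is harmless, since the bound in Theorem~\ref{cor_ndUpperBound} is independent of the chosen representatives).

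With the representatives in hand I would saturate the curve list $\sR_{\mathrm{temp}}$ under the generators $\sH$ of $\aut(Y_i)$, then for each representative polarization compute all intersection numbers with these curves and count the orthogonal ones, in the Mukai case discarding any curve lying in $H$ (which, by Lemma~\ref{lem_uniqueOrt} and Lemma~\ref{lem_UniqueCanonicalMukai}~(2), does not lower the non-degeneracy). The minima of these counts give the two upper bounds; matching lower bounds come from explicit sequences, several already recorded in \cite{MRS22Paper,MRS24Paper}. Feeding the resulting values of $\Fnd(Y_i)$ and $\Mnd(Y_i)$ into Corollary~\ref{cor:max} and Proposition~\ref{prop:casesNdFndMnd} yields the entries of \eqref{eq:14cases} and their coincidence -- including the three surfaces $Y_{84},Y_{121},Y_{158}$ with $\Fnd=\Mnd=\nd=9$. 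The hard part will be guaranteeing completeness of the curve search: the upper bound must hold for every single representative, so one must certify that no Fano or Mukai polarization contracts fewer curves than claimed, i.e. that the generated list of smooth rational curves is exhaustive relative to each polarization. This is most delicate in the Mukai case, both because $|\sfM_Z|$ exceeds $|\sfF_Z|$ by an order of magnitude (see \eqref{table:|FZ|-|MZ|-Kondo}) and because of the deletion of the curve in $H$, which is precisely the phenomenon separating $\Mnd$ from $\Fnd$ and responsible for the first examples with $\nd=9$.
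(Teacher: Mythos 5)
Your proposal follows essentially the same route as the paper's proof: lower bounds by explicit isotropic sequences (length $10$ for $\Fnd$, length $9$ in $\One$ for $\Mnd$), upper bounds via Theorem~\ref{cor_ndUpperBound} applied to representative sets $\sfF_Y'$, $\sfM_Y'$ built from the finite Kond\=o-type lists through Lemma~\ref{lem_FKondoF0} and the chamber isometries $g_0,\ldots,g_M$, searches against $\sR_{\mathrm{temp}}$ and its $\sH$-translates, and finally Corollary~\ref{cor:max} together with Proposition~\ref{prop:casesNdFndMnd}. Two organizational differences are worth noting. First, the paper is more economical: Proposition~\ref{prop_Mnd} deduces the Mukai upper bound for every case with $N_i\neq 8$ purely formally from Proposition~\ref{prop:casesNdFndMnd}, the Fano computation, and the lower bounds, so the direct Mukai search is run only for $i=143,144,171$; likewise $i=145,172,184$ are dispatched via the previously known values of $\nd$ (which are $\leq 8$, forcing all three invariants to coincide). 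Your plan of running both computations on all $14$ surfaces is correct but heavier, and your step ``discard curves lying in $H$'' needs a concrete mechanism when the search finds only \emph{one} curve $R$ orthogonal to $v$: the paper's secondary search in $\sE_{\mathrm{temp}}$ for effective isotropic classes $f$ with $v\cdot f=4$, which by Lemma~\ref{lem_UniqueCanonicalMukai}~(1) reconstructs $(g_1,\ldots,g_9)$ and certifies $R\notin H$ (when two orthogonal curves are found, Lemma~\ref{lem_uniqueOrt} already guarantees one of them works, as you note).

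Two caveats on soundness. Your nefness certification for the classes $g_i=h-f_i-f_{10}$ is insufficient as stated: nonnegative intersection with the finite list $\sR_{\mathrm{temp}}$ of orbit \emph{representatives} (even augmented by finitely many $\sH$-translates) does not certify nefness, since these surfaces carry infinitely many smooth rational curves; one must instead certify membership of the class in $\Nef(Y)$ via the chamber data, e.g.\ by locating it in a translate of $D_0$. (The algebraic construction itself is correct: $g_i^2=0$, $g_i\cdot g_j=1$ for $i\neq j$, and $\sum_{i=1}^{9}g_i=6h-8f_{10}=2(3h-4f_{10})$, so Proposition~\ref{prop_OrbitsLength9} places the sequence in $\One$; the paper simply records verified sequences in \cite{DataNew}.) Conversely, your closing worry about exhaustiveness of the curve search is misplaced: to establish the upper bound one only needs the \emph{existence} of $d$ orthogonal curves for each representative polarization (one implication of Theorem~\ref{cor_ndUpperBound}), so a successful partial search suffices; completeness is required only for the sets of polarization representatives, and that is exactly what Lemma~\ref{lem_FKondoF0} combined with the exhaustive computation of $\sfF_Z$ and $\sfM_Z$ for the finite-automorphism surfaces provides.
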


\begin{proof}[Proof of Theorem~\ref{thm:ndTauBarTau}]
We have that $\nd(Y)=\max\{\Fnd(Y),\Mnd(Y)\}$ by Corollary~\ref{cor:max}. Then, the conclusion follows by combining Proposition~\ref{prop_LowerBounds}, Proposition~\ref{prop_Fnd}, and Proposition~\ref{prop_Mnd} below. 
\end{proof}

\begin{remark}
Some values of $\nd$ listed in Theorem~\ref{thm:ndTauBarTau} were previously known. We have that $Y_1$ is general nodal, and $Y_{172},Y_{184}$ have finite automorphism group (see \cite[Remark~4.1]{MRS24Paper}). In \cite[Section~5]{MRS24Paper}, we computed $\nd(Y_{145})$ through a detailed description of its automorphism group, and produced lower bounds for $\nd(Y_i)$ when $i\neq 1,172,184$. In particular, in all the cases not listed in \eqref{eq:14cases}, \cite{MRS24Paper} gives $\nd(Y_i)\geq10$, and hence $\nd(Y_i)=10$.
\end{remark}

We now prove Propositions~\ref{prop_LowerBounds}, \ref{prop_Fnd}, and \ref{prop_Mnd}. Lower bounds for $\Fnd$ and $\Mnd$ in Theorem~\ref{thm:ndTauBarTau} can be obtained by exhibiting appropriate isotropic sequences. The proof is similar to the one of \cite[Theorem~1.1]{MRS24Paper} for $\nd$, but it rests on a more refined search where we distinguished sequences of length $10$ and sequences in $\One$. In what follows, we define $N_i$ to be the value of $\nd(Y_i)$ claimed in \eqref{eq:14cases}.

\begin{proposition}
\label{prop_LowerBounds}
The following hold:
\begin{itemize}
    \item[(i)] For all surfaces $Y_i$ listed in \eqref{eq:14cases}, $\Fnd(Y_i)\geq N_i$ and $\Mnd(Y_i)\geq N_i$. In particular, we have that $\Mnd(Y_{84})=\Mnd(Y_{121})=\Mnd(Y_{158})=9$;
    \item[(ii)] For all surfaces $Y_i$ not appearing in \eqref{eq:14cases}, $\Fnd(Y_i)= 10$ and $\Mnd(Y_i)= 9$.
\end{itemize}
\end{proposition}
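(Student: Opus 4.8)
The plan is to prove every inequality by exhibiting explicit isotropic sequences and promoting them to the appropriate type via Lemma~\ref{lem_maximal sequences}. The input is the Brandhorst--Shimada data attached to each $Y_i$: I would take the elliptic-fibration representatives $\sE_{\mathrm{temp}}$, the rational-curve representatives $\sR_{\mathrm{temp}}$, and the finite subset $\sH\subseteq\aut(Y_i)$, and use them to generate a large pool of half-fiber classes — isotropic nef vectors in $S_{Y_i}\cong L_{10}$ — by letting a finite portion of $\aut(Y_i)$ act on the fibration representatives. Within this pool I would search for non-degenerate (all-nef) isotropic sequences, which amounts to finding cliques in the graph whose vertices are half-fibers and whose edges record the condition $f\cdot f'=1$. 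Nefness of a class is decided by intersecting against the finitely many rational curves, while the pairwise conditions $f_i\cdot f_j=1-\delta_{ij}$ are a direct Gram-matrix check.

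For the Fano bounds I would proceed according to the length of the sequence found. For each surface in \eqref{eq:14cases} with $N_i\leq 8$, the search produces a non-degenerate sequence of length $N_i$, which extends by Lemma~\ref{lem_maximal sequences}(1)(a) to a canonical length-$10$ sequence of non-degeneracy at least $N_i$, giving $\Fnd(Y_i)\geq N_i$. For $Y_{84},Y_{121},Y_{158}$, where $N_i=9$, I must instead locate an all-nef length-$9$ sequence lying in $\Oext$ — detected by the primitivity of $\sum_i f_i$ via Proposition~\ref{prop_OrbitsLength9} — and extend it by Lemma~\ref{lem_maximal sequences}(2)(a) to a canonical length-$10$ sequence with nine nef entries. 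For part (ii), an all-nef length-$10$ sequence is precisely a witness of $\nd(Y_i)\geq 10$, already produced in \cite[Theorem~1.1]{MRS24Paper}, so $\Fnd(Y_i)=10$.

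For the Mukai bounds the search must be carried out separately, since a length-$9$ subsequence of an all-nef length-$10$ sequence always lies in $\Oext$ and is therefore useless for $\Mnd$. Whenever the non-degenerate sequence at hand has length at most $8$, Lemma~\ref{lem_maximal sequences}(1)(b) extends it to a length-$9$ sequence in $\One$ of non-degeneracy at least $N_i$, yielding $\Mnd(Y_i)\geq N_i$. The decisive cases are those requiring $\Mnd\geq 9$: the surfaces $Y_{84},Y_{121},Y_{158}$ of part (i) together with every surface of part (ii). Here I would exhibit an explicit all-nef length-$9$ sequence $(g_1,\ldots,g_9)$ with $\sum_i g_i$ divisible by $2$, so that it lies in $\One$ by Proposition~\ref{prop_OrbitsLength9} and certifies $\Mnd(Y_i)\geq 9$; since $\Mnd\leq 9$ always, this forces equality, and in particular gives $\Mnd(Y_{84})=\Mnd(Y_{121})=\Mnd(Y_{158})=9$.

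I expect the main obstacle to be exactly this production of all-nef length-$9$ sequences constrained to lie in $\One$. The Fano computation can largely recycle the maximal non-degenerate sequences of \cite{MRS24Paper}, whereas the Mukai bound demands a genuinely refined search in which the $2$-divisibility of the sum is imposed while nefness is preserved; it is this additional constraint that distinguishes the present argument from the earlier computation for $\nd$. Once the relevant sequences are tabulated and recorded alongside the remaining data in \cite{DataNew}, all verifications — the pairwise intersections, nefness against the finitely many rational curves, and the primitivity or $2$-divisibility of the respective sums — reduce to finite linear algebra over $L_{10}$.
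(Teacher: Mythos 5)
Your proposal is correct and takes essentially the same approach as the paper: lower bounds by exhibiting explicit isotropic sequences built from the Brandhorst--Shimada data, recycling the $\nd$-bounds of \cite{MRS24Paper} together with the extension Lemma~\ref{lem_maximal sequences} when $N_i\leq 8$, and running a refined search for all-nef length-$9$ sequences in $\One$ (certified via the $2$-divisibility criterion of Proposition~\ref{prop_OrbitsLength9}) for the $\Mnd$ claims, exactly as the paper does with the sequences recorded in \cite{DataNew}. The only cosmetic difference is that the paper leaves the extension step implicit and your nefness check is automatic anyway, since your candidate classes are automorphism-translates of nef fibration classes.
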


\begin{proof}
Let $Y_i$ be as in \eqref{eq:14cases}. To show that $\Fnd(Y_i)\geq N_i$ (resp. $\Mnd(Y_i)\geq N_i$), it is enough to exhibit an explicit length $10$ (resp. length $9$ non-extensible) isotropic sequence with non degeneracy $N_i$. For $i\neq84,121,158$, we have that $N_i\neq9$ and we know from \cite{MRS24Paper} that $\nd(Y_i)\geq N_i$, hence also $\Fnd(Y_i),\Mnd(Y_i)\geq N_i$. For $i\in\{84,121,158\}$, appropriate isotropic sequences can be found in \cite{DataNew}. Since $N_i=9$, we obtain $\Fnd(Y_i)\geq9$ and $\Mnd(Y_i)=9$. This proves (i). For part (ii), suppose that $Y_i$ does not appear in \eqref{eq:14cases}. By \cite[Theorem~1.1]{MRS24Paper} we have that $\nd(Y_i)=10$, which immediately gives $\Fnd(Y_i)=10$ by definition. Finally, to show that $\Mnd(Y_i)=9$, appropriate length $9$ non-extensible isotropic sequences of non-degeneracy $9$ can be found in \cite{DataNew}.
\end{proof}

\begin{proposition}
\label{prop_Fnd}
For all surfaces $Y_i$ listed in \eqref{eq:14cases}, $\Fnd(Y_{i})=N_i$.
\end{proposition}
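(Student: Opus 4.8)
The inequality $\Fnd(Y_i)\geq N_i$ is part of Proposition~\ref{prop_LowerBounds}, so the whole content is the reverse bound $\Fnd(Y_i)\leq N_i$. Writing $d_i:=10-N_i$, by Theorem~\ref{cor_ndUpperBound}~(1) this is equivalent to the following statement: for some (equivalently, any) set $\sfF_{Y_i}'$ of representatives of the $\aut(Y_i)$-orbits on the numerical Fano polarizations, every $h\in\sfF_{Y_i}'$ is orthogonal to at least $d_i$ distinct smooth rational curves of $Y_i$. The plan is therefore to produce one explicit finite set $\sfF_{Y_i}'$ and, for each of its elements, to exhibit $d_i$ orthogonal $(-2)$-curves; the independence clause of Theorem~\ref{cor_ndUpperBound} guarantees that this is all that is needed.

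To build $\sfF_{Y_i}'$ I would use that each surface in \eqref{eq:14cases} has embedding type \texttt{12A}, \texttt{12B}, \texttt{20A}, or \texttt{20B}, all of which appear in \eqref{table:Irec_Kondo}. Hence $Y_i$ shares its induced chamber $D_0$ with an Enriques surface $Z$ having finite automorphism group of the matching Kond\=o type, and the fundamental domain for the action of $\aut(Y_i)$ on $\Nef(Y_i)$ is isometric, via $\lambda_{Y}$, to $\Nef(Z)=D_0$. In particular the numerical Fano polarizations contained in $\lambda_{Y}^{-1}(D_0)$ already meet every $\aut(Y_i)$-orbit, so I may take $\sfF_{Y_i}'=\sfF_{Y_i}^0$. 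By Lemma~\ref{lem_FKondoF0} the isometry $\lambda=\lambda_Z^{-1}\circ\lambda_{Y}$ identifies $\sfF_{Y_i}^0$ with the set $\sfF_Z$, which is finite and has been tabulated by the exhaustive search recorded in \cite{DataNew}. This provides the required explicit, finite list of representatives.

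For the second step, for each $h=\lambda^{-1}(h')$ with $h'\in\sfF_Z$ I would look for smooth rational curves $R$ of $Y_i$ with $h\cdot R=0$, generating candidate curves by applying the automorphisms of $\sH$ (and the subgroup they generate) to the orbit representatives $\sR_{\mathrm{temp}}$ of \cite{SB20data}. It is essential that the curves used here are those of $Y_i$ and not of $Z$: although $h$ and $h'$ lie on the same walls of $D_0$, only the walls that bound $\Nef(Y_i)$ correspond to $(-2)$-curves on $Y_i$ through Lemma~\ref{lem_UniqueCanonicalFano}~(2), and it is exactly these that count toward the non-degeneracy of $Y_i$. Since $h$ is big and nef, the curves orthogonal to it are contracted by the associated Fano model and hence are finite in number, so the search terminates; verifying that at least $d_i$ of them exist for \emph{every} $h'\in\sfF_Z$ yields $\Fnd(Y_i)\leq N_i$ and hence the equality.

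The main obstacle is the completeness of this finite verification rather than any single intersection computation. Because $\aut(Y_i)$ is infinite, one must generate a sufficiently large---yet finite---portion of the orbits of $\sR_{\mathrm{temp}}$ to be certain of capturing $d_i$ contracted curves for each of the (frequently thousands of) polarizations in $\sfF_Z$. Moreover the condition is tightest exactly when $N_i$ is smallest: for $i=145,172$ one has $d_i=6$, so every Fano polarization must be shown to contract six distinct smooth rational curves, uniformly over the whole list. It is here that the explicit Brandhorst--Shimada data and a careful computer-assisted search become indispensable.
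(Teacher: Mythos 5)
Your proposal follows the paper's strategy in outline, but it contains a genuine gap at the key reduction step. You claim that the fundamental domain for the action of $\aut(Y_i)$ on $\Nef(Y_i)$ is isometric via $\lambda_{Y}$ to $\Nef(Z)=D_0$, and hence that you may take $\sfF_{Y_i}'=\sfF_{Y_i}^0$. This is unjustified, and in general false for the eleven surfaces with infinite automorphism group: the Brandhorst--Shimada data guarantees only that $\lambda_Y^{-1}\bigl(\bigcup_{j=0}^{M}g_j(D_0)\bigr)$ contains a fundamental domain for $\aut(Y)$ on $\Nef(Y)$, where $\{g_0=\mathrm{id},g_1,\ldots,g_M\}\subseteq O(L_{10})$ is a finite list of isometries. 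The nef cone of $Y_i$ is tessellated by infinitely many induced chambers, and $\aut(Y_i)$ need not act transitively on them --- the list $\{g_j\}$ exists precisely to account for the several orbits of chambers. Lemma~\ref{lem_FKondoF0} identifies $\sfF_{Y}^0$ with $\sfF_Z$ as sets of polarizations in a \emph{single} chamber, but carries no statement about $\aut(Y)$-orbits. Consequently the correct set of representatives is $\sfF_Y'=\bigcup_{j}g_j'(\sfF_Y^0)$ with $g_j'=\lambda_Y^{-1}\circ g_j\circ\lambda_Y$, and the orthogonal-curve search must be run on every translate $g_j'(h)$, which is what the paper does. With your choice $\sfF_{Y_i}'=\sfF_{Y_i}^0$, the hypothesis of Theorem~\ref{cor_ndUpperBound}~(1) would be verified only on a subset that may miss entire $\aut(Y_i)$-orbits of Fano polarizations, so the bound $\Fnd(Y_i)\leq N_i$ would not follow; the independence clause of Theorem~\ref{cor_ndUpperBound} does not rescue this, since it applies only to genuine sets of representatives.

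A secondary difference: the paper does not treat all fourteen cases uniformly. For $i=145,172,184$ (exactly your hardest cases, with $d_i=6$ or $3$) it invokes the previously known values $\nd(Y_{145})=4$, $\nd(Y_{172})=4$, $\nd(Y_{184})=7$ and concludes $\Fnd=\nd$ from Proposition~\ref{prop:casesNdFndMnd}, since $\nd\leq 8$ forces all three invariants to coincide. The computer verification is therefore only needed for the eleven cases with embedding types \texttt{12B}, \texttt{20A}, \texttt{20B}, where $d_i\in\{1,2,3\}$; your plan to run a six-curve check over all of $\sfF_Z$ for the \texttt{12A} cases is avoidable. Once the representative set is corrected as above, the rest of your argument (the intersection checks against $\sR_{\mathrm{temp}}$ and its $\sH$-translates, and the appeal to Lemma~\ref{lem_UniqueCanonicalFano}~(2) through Theorem~\ref{cor_ndUpperBound}) matches the paper's proof.
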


\begin{proof}
By \cite[Theorem~1.1~(1)]{MRS24Paper}, we have that $\nd(Y_{145})=4$. The surfaces $Y_{172}$ and $Y_{184}$ have finite automorphism groups, for which $\nd$ is known: $\nd(Y_{172})=4$ and  $\nd(Y_{184})=7$, see \cite[Remark~4.1]{MRS24Paper}. Hence, $\Fnd(Y_{145})=4$, $\Fnd(Y_{172})=4$, and $\Fnd(Y_{184})=7$ by Proposition~\ref{prop:casesNdFndMnd}.

For the remaining $11$ cases, we need to show that $\Fnd(Y_i)\leq N_i$.
For simplicity, we drop the subscript $i$ and write $Y=Y_i$, $N=N_i$.
As introduced in Section~\ref{BSConstruction}, there are $g_0=\mathrm{id},\ldots,g_M\in O(L_{10})$ and a chamber $D_0$ in $L_{10} \otimes \bR$ such that $\lambda_Y^{-1} \left(\bigcup_{j=0}^Mg_j(D_0)\right)$ contains a fundamental domain for the action of $\aut(Y)$ on $\Nef(Y)$. Let $\sfF_Y^0$ be the set of numerical Fano polarizations contained in $\lambda_Y^{-1}(D_0)$. Then $\sfF_Y'\coloneqq \bigcup_{j=1}^Mg_j'(\sfF_Y^0)$, where $g'_j \coloneqq \lambda_Y^{-1}\circ g_j\circ\lambda_Y$, is a finite set of representatives for the orbits of the action of $\aut(Y)$ on $\sfF_Y$ as in Definition~\ref{def:set-of-polarizations}.

The \texttt{irec} label of $Y$ is \texttt{12B} for $i=143$, \texttt{20A} for $i=85,122,123,159,176$, and \texttt{20B} for $i=84,121,144,158,171$. Then, by Lemma~\ref{lem_FKondoF0}, the set $\sfF_Y^0$ matches $\sfF_Z$ for some Enriques surface $Z$ with finite automorphism group as in \eqref{table:Irec_Kondo} (the sets $\sfF_Z$ were computed explicitly in Section~\ref{sec:num-Fano-pol-En-surf-with-finite-aut}).

For each $h \in \sfF_Y'$, we find $R_1,\ldots, R_{10-N}$ distinct smooth rational curves in $\sR_{\mathrm{temp}}$, or possibly in $\sR_{\mathrm{temp}}$ acted upon by $\sH$, such that $h\cdot R_k =0$ for $k=1,\ldots,10-N$: the list of such $R_k$ for each $h\in\sfF_Y'$ is available at \cite{DataNew}. Then $\Fnd(Y)\leq N$ by Theorem~\ref{cor_ndUpperBound}~(1).
\end{proof}

\begin{remark}
\label{rem:suggestionSimon}
In the proof of Proposition~\ref{prop_Fnd}, an alternative approach to construct a set $\sfF_Y'$ was suggested to us by Simon Brandhorst. This method does not rely on the Enriques surfaces with finite autormorphism group (Section~\ref{subs:FiniteAuto}), and it therefore applies also to the $\ttbar$-generic Enriques surfaces whose \texttt{irec} does not appear in \eqref{table:Irec_Kondo}. 

As in \cite[Section~4.2]{BS22}, $D_0$ is tessellated by Vinberg chambers. If $(e_1,\ldots,e_{10})$ is a standard root basis of $L_{10}$, the chamber $C$ defined by $\langle x,e_i\rangle\geq 0$ is a Vinberg chamber and it contains only one vector $v$ of degree $10$ with $\Phi(v)=3$, see \cite[Corollary~1.5.4]{CDL25}. A set $\sfF_Y'$ can then be obtained by recursively reflecting $v$ across the walls of the chosen Vinberg chambers in $D_0$. We checked the lists in \cite{DataNew} with this alternative method.
\end{remark}

\begin{proposition}
\label{prop_Mnd}
For all surfaces $Y_i$ listed in \eqref{eq:14cases}, $\Mnd(Y_{i})=N_i$.
\end{proposition}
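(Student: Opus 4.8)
The plan is to establish the upper bound $\Mnd(Y_i)\le N_i$, since the reverse inequality is already furnished by Proposition~\ref{prop_LowerBounds}~(i). I would organize the argument by the value of $N_i$, crucially exploiting that $\Fnd(Y_i)=N_i$ has just been proved in Proposition~\ref{prop_Fnd}. For the three surfaces with $N_i=9$, namely $Y_{84},Y_{121},Y_{158}$, there is nothing left to do: every isotropic sequence in $\One$ has length $9$, so $\Mnd(Y)\le 9$ always holds, and Proposition~\ref{prop_LowerBounds}~(i) already records $\Mnd(Y_i)=9$.

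For the surfaces with $N_i\le 7$, namely $Y_{85},Y_{122},Y_{123},Y_{145},Y_{159},Y_{172},Y_{176},Y_{184}$, I would invoke Proposition~\ref{prop:casesNdFndMnd} directly, avoiding any further computation. Indeed, in each of its exceptional cases (i)--(iv) the Fano invariant satisfies $\Fnd\ge 8$; since $\Fnd(Y_i)=N_i\le 7$ by Proposition~\ref{prop_Fnd}, none of these exceptional cases can occur, and therefore $\nd(Y_i)=\Fnd(Y_i)=\Mnd(Y_i)=N_i$.

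This leaves only the three cases with $N_i=8$, namely $Y_{143},Y_{144},Y_{171}$. Here $\Fnd(Y_i)=8$ by Proposition~\ref{prop_Fnd}, and Proposition~\ref{prop:casesNdFndMnd} narrows the possibilities to either $\Mnd(Y_i)=8$ or the exceptional case~(iv) with $\Mnd(Y_i)=9$. To exclude the latter I would apply the upper bound of Theorem~\ref{cor_ndUpperBound}~(2) with $d=1$, for which it suffices to show that every representative numerical Mukai polarization contracts at least one smooth rational curve lying outside $H$. Exactly as in the proof of Proposition~\ref{prop_Fnd}, I would take $\sfM_Y'\coloneqq\bigcup_{j=1}^M g_j'(\sfM_Y^0)$ as a set of representatives for the orbits of $\aut(Y)$ on $\sfM_Y$, and use Lemma~\ref{lem_FKondoF0} to identify $\sfM_Y^0$ with the set $\sfM_Z$ of the relevant Enriques surface $Z$ with finite automorphism group (embedding type \texttt{12B} for $i=143$ and \texttt{20B} for $i=144,171$), computed in Section~\ref{sec:num-Fano-pol-En-surf-with-finite-aut}. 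For each $v\in\sfM_Y'$, with associated canonical sequence $(g_1,\ldots,g_9)$ and $H=\langle g_1,\ldots,g_9\rangle^\perp$, I would exhibit from the data in \cite{DataNew} a curve $R\in\sR_{\mathrm{temp}}$, possibly moved by an element of $\sH$, satisfying $v\cdot R=0$ together with $R\cdot g_k\ne 0$ for some $k$ (which is precisely the condition $R\notin H$). Theorem~\ref{cor_ndUpperBound}~(2) then yields $\Mnd(Y_i)\le 8$.

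The delicate point, and the only genuine difference from the Fano computation of Proposition~\ref{prop_Fnd}, is the requirement $R\notin H$. By Lemma~\ref{lem_uniqueOrt} the rank-one lattice $H\cong A_1$ may contain an effective $(-2)$-curve, and such a curve is contracted by $v$ without lowering the non-degeneracy of its canonical sequence (cf. Lemma~\ref{lem_UniqueCanonicalMukai}~(2)). Consequently, for each Mukai polarization I must certify that the contracted curve I select genuinely appears in $(g_1,\ldots,g_9)$ rather than being orthogonal to the whole sequence; concretely this amounts to computing $H$ for each $v$ and verifying the intersection numbers $R\cdot g_k$. I expect this bookkeeping, namely ensuring that no representative has all of its contracted $(-2)$-curves inside $H$, to be the main obstacle, since it is exactly what separates the true value $\Mnd=8$ from the maximal value $\Mnd=9$ allowed by case~(iv).
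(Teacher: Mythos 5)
Your proposal is correct and structurally coincides with the paper's proof: the cases $N_i\neq 8$ are dispatched by combining Propositions~\ref{prop:casesNdFndMnd}, \ref{prop_LowerBounds}~(i), and \ref{prop_Fnd} (your separate treatment of $N_i=9$, where the trivial bound $\Mnd\leq 9$ plus the lower bound suffices, is a harmless simplification), and the three cases $N_i=8$ are handled by Theorem~\ref{cor_ndUpperBound}~(2) with $d=1$, using the set of representatives $\sfM_Y'=\bigcup_j g_j'(\sfM_Y^0)$ built from Lemma~\ref{lem_FKondoF0} and the precomputed sets $\sfM_Z$, exactly as in the paper. The one place you diverge is the certification of $R\notin H$. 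You propose to compute, for every $v\in\sfM_Y'$, the associated canonical sequence $(g_1,\ldots,g_9)$ (equivalently $H$) and verify $R\cdot g_k\neq 0$ directly; note that $H$ cannot be read off from $v$ alone, so this forces you to recover the sequence for each representative, which by Lemma~\ref{lem_UniqueCanonicalMukai}~(1) means searching for effective isotropic classes $f$ with $v\cdot f=4$. The paper instead splits into two scenarios that make this search the exception rather than the rule: whenever the search in $\sR_{\mathrm{temp}}$ (and its $\sH$-translates) yields \emph{two} distinct curves orthogonal to $v$, Lemma~\ref{lem_uniqueOrt} --- at most one effective $(-2)$-class in the rank-one lattice $H$ --- guarantees that at least one of them lies outside $H$, with no need to determine the sequence at all; only when a single orthogonal curve $R$ is found does the paper run the secondary search in $\sE_{\mathrm{temp}}$, producing $\bfg=(f_1,\ldots,f_8,f_1+R)$ up to reordering and thereby exhibiting $R$ as a curve appearing in the sequence. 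Your direct verification is sound and uniform but computationally heavier; the paper's two-scenario organization exploits the rank-one statement of Lemma~\ref{lem_uniqueOrt} (which you cite only as motivation) to avoid reconstructing the sequence in the generic case. Both routes prove the proposition.
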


\begin{proof}
Suppose $N_i\neq8$. Then, combining Proposition~\ref{prop:casesNdFndMnd}, Proposition~\ref{prop_LowerBounds}~(i), and Proposition~\ref{prop_Fnd}, we have that $N_i\leq\Mnd(Y_i)=\Fnd(Y_i)=N_i$. Next, we consider the case $N_i=8$, which occurs for $i=143,144,171$. For simplicity of notation, let $Y=Y_i$.

For each such $Y$, we know from Proposition~\ref{prop_Fnd} that $\Fnd(Y)=8$, hence $\Mnd(Y)=8$ or $9$ by Proposition~\ref{prop:casesNdFndMnd}. So, we need to prove that $\Mnd(Y)\leq8$. We do this via Theorem~\ref{cor_ndUpperBound}~(2).

First, we construct a finite set $\sfM_Y'$ of representatives for the orbits of the action of $\aut(Y)$ on $\sfM_{Y}$ as we did in the proof of Proposition~\ref{prop_Fnd}. This is done using the sets $\sfM^0_Y$ and $\sfM_Z$ instead of $\sfF_Y^0$ and $\sfF_Z$. By Theorem~\ref{cor_ndUpperBound}~(2) applied to $\sfM_Y'$, we have that $\Mnd(Y)\leq 8$ if and only if for each $v \in \sfM_Y'$ with associated canonical isotropic sequence $\bfg = (g_1,\ldots,g_9)$, there exists a smooth rational curve $R$ such that $v \cdot R=0$ and $R\notin \langle g_1,\ldots, g_9 \rangle^\perp$. So, given $v \in \sfM_Y'$, we search for such an $R$ in $\sR_{\mathrm{temp}}$, or possibly in $\sR_{\mathrm{temp}}$ acted upon by $\sH$.
One of the following two scenarios occur (the computational data is available at \cite{DataNew}):
\begin{enumerate}
\item We find two distinct smooth rational curves $R_1,R_2$ orthogonal to $v$. Then, by Lemma~\ref{lem_uniqueOrt} there exists $R\in\{R_1,R_2\}$ such that $g_j \cdot R\neq 0$ for some $j=1,\ldots,9$. This is because at most one smooth rational curve is orthogonal to all $g_j$. So we are done.
\item We find a single smooth rational curve $R$ orthogonal to $v$. In this case, we run a secondary search: by Lemma~\ref{lem_UniqueCanonicalMukai}~(1), the vectors $g_i$ are the only effective isotropic classes satisfying $v\cdot g_i = 4$. Thus, we search in $\sE_{\mathrm{temp}}$ (and its $\sH$-translates) for vectors $f$ satisfying $v\cdot f=4$. We are able to find vectors $\{f_1,\ldots,f_8\}$ such that $\bfg = (f_1,\ldots,f_8,f_1+R)$, up to reordering. From this we can see that $R$ has the required properties.
\end{enumerate}
\end{proof}

\begin{remark}
There is an alternative (but less uniform) way to prove Theorem~\ref{thm:ndTauBarTau}.
Indeed, we can compute $\nd$, $\Fnd$, and $\Mnd$ of the $\ttbar$-generic Enriques surfaces with $\ttbar=(E_7,E_7)$, $(D_7,D_7)$, $(D_8,D_8)$, and $(D_8,E_8)$ in the same way we did above. After this, the remaining cases follow by Theorem~\ref{thm:boundLatticePol}, since:
\begin{itemize}
        \item the $\ttbar$-generic Enriques surfaces with embedding type \texttt{20A} are $(E_7,E_7)$ Enriques  surfaces;
        \item the $(D_7+A_1,D_7+A_1)$-generic and $(D_7+2A_1,D_9)$-generic Enriques surfaces are $(D_7,D_7)$ Enriques surfaces;
        \item the $(D_8+A_1,E_8+A_1)$-generic Enriques surfaces are $(D_8,E_8)$ Enriques surfaces.
\end{itemize}
\end{remark}

\subsection{Non-degeneracy invariants and birational models: examples and remarks}\label{sec:Mukai}

We conclude the paper with some examples and open questions. The examples highlight a somewhat unexpected behavior of certain Mukai polarizations and their corresponding birational models.

Let $v$ be a numerical Mukai polarization with associated canonical sequence $\bfg=(g_1,\ldots,g_9)$ (Lemma~\ref{lem_MukaiPolCanonicalSeq}). If $v$ is ample, then all the $g_i$ are nef: indeed, if a smooth rational curve $R$ appears in $\bfg$ then $v\cdot R=0$. 
The converse does not hold: we exhibit in Example~\ref{exa_nefMukai} an Enriques surface $Y$ with a numerical Mukai polarization $v$ which is not ample and whose associated canonical isotropic sequence $\bfg = (g_1,\ldots, g_9)$ consists of nef classes. This happens precisely when the $(-2)$-class discussed in Lemma~\ref{lem_uniqueOrt} is effective.

\begin{example}[A singular non-degenerate Mukai model]\label{exa_nefMukai}
    Let $Y$ the surface VI in Kond\=o's classification \cite{Kon86}. Take any of the ten saturated sequences of length $9$ of type $6 \times (\widetilde{A}_1^{\hf} + \widetilde{A}_2^{\f} + \widetilde{A}_5^{\f}),  3 \times (\widetilde{A}_3^{\f} + \widetilde{D}_5^{\f})$, see the fourth row of \cite[Table~9]{MRS22Paper}. These give rise to Mukai polarizations which are not ample. This claim can be verified by checking the computational data \cite{DataNew}.
\end{example}

Next, we exhibit an Enriques surface which admits an ample Mukai polarization, but no ample Fano polarizations.

\begin{example}[A smooth Mukai model without smooth Fano models]\label{ex_Fnd=Mnd=9}
Consider a $(D_7+2A_1,D_9)$-generic Enriques surface $Y=Y_{158}$. We know from Theorem~\ref{thm:ndTauBarTau} that $\nd(Y)=\Fnd(Y)=9$, so there is no smooth Fano model. Consider the following saturated sequence $\bfg = (g_1,\ldots, g_9)$ of length $9$:
\begin{align*}
&g_1 := \frac{1}{2}(R_{0} + R_{2}), \quad g_2 := \frac{1}{2}(R_{2} + R_{16}), \quad g_3 := \frac{1}{2}(R_{2} + R_{2}(H_0)^{-1}), \\ &g_4 := \frac{1}{2}(R_{3} + R_{4}), \quad g_5 := \frac{1}{2}(R_{3} + R_{12}), \quad g_6 := \frac{1}{2}(R_{2} + R_{14}), \\
&g_7 := \frac{1}{2}(R_{2} + R_{8}), \quad g_8:= \frac{1}{2}(R_1+R_6+2R_7+R_8+R_{15}),\\
&g_9 := \frac{1}{2}(R_5+R_6+R_8+R_9+2R_{11}).
\end{align*}
The indexing comes from the specific ordering in the data \texttt{Rats.Ratstemp}, and \texttt{Autrec.HHH}.
If $R$ is a $(-2)$-curve such that $v\cdot R = 0$, by Lemma~\ref{lem_UniqueCanonicalMukai}~(2) we must have $g_j\cdot R=0$ for all $j=1,\ldots,9$ since $\bfg$ is non-degenerate. However, there are no $(-2)$-curves in $H = \langle g_1,\ldots,g_9 \rangle^\perp$: indeed there are only two $(-2)$-vectors in $H$. Denote them by $s_1$ and $s_2=-s_1$. It is not hard to exhibit, for $i=1,2$, an elliptic fibration with class $f_i$ such that $f_i\cdot s_i< 0$, which shows that the $s_i$ are not effective, by \cite[Remark~III.5]{Beauville}. The details of this computation can be found in \cite{DataNew}.
\end{example}

The above example shows that the existence of an ample Mukai polarization on $Y$ does not imply that of an ample Fano polarization. However, we may weaken the question, and ask the following.

\begin{question} 
Does the existence of an ample Mukai polarization on $Y$ imply $\Fnd(Y)\geq 9$? We may further weaken the question: does $\Mnd(Y)=9$ imply $\Fnd(Y)\geq 9$?
\end{question}

An affirmative answer to the latter is equivalent to saying that Case~(iv) in Proposition~\ref{prop:casesNdFndMnd} does not occur. 
We point out that by Theorem~\ref{thm:ndTauBarTau}, Cases~(ii), (iii), (iv) of Proposition~\ref{prop:casesNdFndMnd} do not occur for $\ttbar$-generic Enriques surfaces. We therefore ask the following question.

\begin{question}
Are there examples of Enriques surfaces which realize Cases~(ii), (iii), (iv) of Proposition~\ref{prop:casesNdFndMnd}?
\end{question}

It was already observed in \cite[Remark~4.3]{MRS24Paper} that no $\ttbar$-generic Enriques surface has non-degeneracy invariant $5$ or $6$. Across all characteristics, examples are known of Enriques surfaces with non-degeneracy invariant equal to $1,2,3,4,7,8,9,10$ (see \cite{MMV22_NonDeg3} for the first three cases). It is then natural to ask the following.

\begin{question}
Are there examples of Enriques surfaces with non-degeneracy invariant $5$ or $6$?
\end{question}

\end{document}